\theoremstyle{plain} 
\newtheorem{theorem}{Theorem}
\newtheorem*{theorem*}{Theorem}
\newtheorem{prop}[theorem]{Proposition}
\newtheorem{lemma}[theorem]{Lemma}
\newtheorem{coro}[theorem]{Corollary}
\newtheorem{claim}{Claim} 
\theoremstyle{definition} \newtheorem{definition}{Definition}
\theoremstyle{remark} \newtheorem{remark}{Remark}
\theoremstyle{definition} \newtheorem*{ack}{Acknowledgment}
\author{Chuanhao Wei}
\title[log-Kodaira dimension and zeros of holomorphic log-one-forms]{Logarithmic Kodaira dimension and zeros of holomorphic log-one-forms}
\date{}
\begin{document}
\maketitle

\section{Introduction}
In this paper, we prove the conjecture that appeared in \cite{W16}, which is a natural generalization of the work of M. Popa and C. Schnell, \cite{PS14}.
\begin{theorem}\label{T:log general}
The zero-locus of any global holomorphic log-one-form on a projective
log-smooth pair $\left(X,D\right)$ of log-general type must be non-empty.
\end{theorem}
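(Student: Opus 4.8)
The plan is to reduce the statement to a logarithmic (equivalently, semiabelian) version of the generic vanishing theory of Popa--Schnell and then run a Koszul-complex argument. Write $U = X \setminus D$ and let $a\colon U \to G$ be the quasi-Albanese morphism, where $G$ is the semiabelian variety $\mathrm{Alb}(U)$, sitting in an extension $1 \to T \to G \to A \to 0$ of an abelian variety $A$ by a torus $T \cong (\mathbb{C}^\ast)^r$. Since every global log-one-form is the pullback of a translation-invariant one-form on $G$, a given $\omega \in H^0\!\left(X, \Omega^1_X(\log D)\right)$ has the form $\omega = a^\ast \eta$ for some invariant $\eta \in H^0(G, \Omega^1_G)$. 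First I would fix a smooth equivariant compactification $\bar a\colon X \to \bar G$ so that the mixed-Hodge-theoretic machinery on $G$ is available.

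Next, I would argue by contradiction: suppose the zero-locus $Z(\omega) = \varnothing$, so that $\omega$ is a nowhere-vanishing section of the bundle $\Omega^1_X(\log D)$. Wedging with $\omega$ then produces an exact Koszul complex
$$0 \to \mathcal{O}_X \xrightarrow{\wedge \omega} \Omega^1_X(\log D) \xrightarrow{\wedge \omega} \cdots \xrightarrow{\wedge \omega} \Omega^n_X(\log D) \to 0, \qquad n = \dim X,$$
whose last term is the log-canonical bundle $\Omega^n_X(\log D) = \mathcal{O}_X(K_X + D)$. The exactness of this complex, read through the Hodge filtration on the logarithmic de Rham complex of $(X,D)$ and the strictness supplied by Saito's theory, should translate the nonvanishing of $\omega$ into a global vanishing statement for the hypercohomology of the twisted complexes, in a range controlled by the direction of $\eta$.

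The heart of the argument, and the step I expect to be the main obstacle, is the semiabelian generic vanishing input. The object to analyze is the pushforward $R\bar a_\ast$ of the logarithmic de Rham complex, which underlies a mixed Hodge module on $\bar G$; I would need the analogue of the Popa--Schnell structure theorem asserting that the cohomology support loci are finite unions of torsion translates of subtori of $\hat G$ (in the style of Green--Lazarsfeld, Simpson, and the semiabelian refinements). The genuine difficulty here is that $U$ carries only a \emph{mixed} Hodge structure, so one must control the weight filtration and verify strictness of the Hodge filtration in the noncompact setting; combined with the Koszul exactness above, this should force the relevant support loci to be positive-dimensional and invariant under the one-parameter subgroup of $G$ determined by $\eta$.

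Finally, I would convert this invariance back into geometry. Invariance of the support loci under a positive-dimensional subgroup $H \le G$ yields an $H$-equivariant fibration structure, so that $U$ is fibered by positive-dimensional $H$-orbits, each of which is a coset in a semiabelian variety and therefore has log-Kodaira dimension $0$. By Kawamata's easy addition for the logarithmic Kodaira dimension, this gives $\bar\kappa(U) \le \dim U - 1 < \dim U$, contradicting the hypothesis that $(X,D)$ is of log-general type. This contradiction shows $Z(\omega) \neq \varnothing$, completing the proof.
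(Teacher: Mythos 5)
Your overall architecture --- quasi-Albanese map, Koszul complex built from a hypothetical nowhere-vanishing log-one-form, a generic-vanishing-type input on the semiabelian variety --- is the Popa--Schnell strategy that this paper is modelled on, but as written the proposal has a genuine hole exactly where you flag ``the main obstacle'': you assume a semiabelian generic vanishing / structure theorem for the cohomology support loci of the relevant mixed Hodge module, and you never supply it. The paper deliberately routes around this point and does \emph{not} use generic vanishing at all. Instead it first uses the Iitaka fibration of $\omega_X\left(D\right)$ together with the countability of quasi-abelian subgroups (Proposition \ref{P:coutable subgroup}) to reduce to the situation where $H^0\left(X,\left(\omega_X\left(D\right)\right)^{\otimes k}\otimes f^*\left(p^*\mathcal{A}^{-1}\otimes\mathcal{O}_{P^{r,d}}\left(-L\right)\right)\right)\neq 0$ for an ample $\mathcal{A}$ on the abelian part; then a multiplication-by-$k$ cover and a cyclic cover reduce to $k=1$; and the key torsion-freeness of $p_{V*}R^0\hat g_*\left(\hat{\mathcal{L}}^{-1}\otimes C_{Y,D^Y}\right)$ (Claim \ref{C:main claim}) is proved by a logarithmic Kodaira--Saito vanishing theorem (Theorem \ref{T:more general saito V}) combined with Grothendieck duality for the graded objects attached to the log extensions $\mathcal{M}_{*L}$ and $\mathcal{M}_{!L}$. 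So the ``heart'' you defer is precisely the ingredient the paper replaces by something it can actually prove.

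Two further concrete problems. First, your final step does not follow as stated: invariance of cohomology support loci under a subgroup lives on the dual/character side of $G$, and it does not by itself produce a fibration of $U$ by orbits of a positive-dimensional subgroup $H\le G$; to get from translation-invariance of the loci to a Kodaira-dimension drop you would need an Ein--Lazarsfeld/Chen--Hacon type argument, which is not off the shelf in the log setting. (The paper's contradiction is far more elementary: if $\mathrm{Supp}\,\mathcal{F}$ does not dominate $V$, then $p_{V*}\mathcal{F}$ is a torsion subsheaf of a torsion-free sheaf, hence zero, yet its top graded piece is $g_*\left(\mathscr{O}_Y\left(\psi^*f^{-1}L-g^{-1}L\right)\right)$, which visibly has a nonzero global section.) Second, you write ``fix a smooth equivariant compactification $\bar a\colon X\to\bar G$'' as if the quasi-Albanese map extended to $X$; in general it only extends after modifying $X$, and an arbitrary log resolution can create new zeros of log-one-forms along exceptional divisors, breaking the equivalence between emptiness of the zero-locus upstairs and downstairs. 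The paper handles this with Lemma \ref{L:toroidal extension}, which arranges the extension using only \emph{toroidal} modifications, for which pullback preserves (non)vanishing of log-one-forms. Both points must be repaired before the proposal becomes a proof.
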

Actually, we can prove a more general result:
\begin{theorem}\label{T:log pair}
Let $\left(X,D\right)$ be a projective log-smooth pair, and let 
$$W \subset H^0 \left(X,\Omega^1_X\left(\log D\right)\right)$$ 
be a linear subspace that consists of global holomorphic log-one-forms with empty zero-locus. 
 Then the dimension of $W$ can be at most $\dim X-\kappa\left(X, D\right)$, where $\kappa
$ stands for the logarithmic Kodaira dimension.
\end{theorem}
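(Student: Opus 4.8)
The plan is to convert the cohomological hypothesis on $W$ into the geometry of a fibration over a semi-abelian variety, and then to bound the log-Kodaira dimension of the total space by the dimension of a fibre. Set $n=\dim X$ and $k=\dim W$. First I would reinterpret the hypothesis fibrewise: the requirement that every nonzero $\omega\in W$ have empty zero-locus is equivalent to saying that the evaluation map $W\otimes\mathcal{O}_X\to\Omega^1_X(\log D)$ is injective on every fibre, i.e. $W\otimes\mathcal{O}_X$ is a rank-$k$ subbundle of $\Omega^1_X(\log D)$. By Deligne's theory the global log-one-forms are $d$-closed, so the forms in $W$ span an integrable subbundle, hence a codimension-$k$ foliation. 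I would realize this foliation through Iitaka's logarithmic quasi-Albanese construction: $W$ cuts out a $k$-dimensional quotient semi-abelian variety $B$ of $\mathrm{Alb}(X,D)$, and the induced morphism $g\colon X\to\overline B$ (to a smooth compactification with boundary $D_B$) pulls the translation-invariant log-one-forms on $\overline B$ back to $W$. The subbundle condition says precisely that $g$ is a log-submersion: it is smooth of relative dimension $n-k$ over $U=X\setminus D$, with $g^*\Omega^1_{\overline B}(\log D_B)\cong W\otimes\mathcal{O}_X$ the given trivial subbundle.

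Since $\overline B$ compactifies a semi-abelian variety, its log-canonical bundle $\omega_{\overline B}(\log D_B)$ is trivial; combined with the smoothness of $g$ this identifies $\omega_X(\log D)$ with the relative log-canonical bundle $\omega_{X/\overline B}(\log)$. Thus proving the theorem amounts to the inequality $\kappa(X,D)\le n-k$, i.e. that $\kappa(X,D)$ is at most the dimension of a general fibre $F$ of $g$. I want to stress that the naive logarithmic easy-addition bound $\kappa(X,D)\le\kappa(F,D_F)+\dim B$ is useless here, and that the reverse bound $\kappa(X,D)\le\kappa(F,D_F)$ is false for general fibrations over an abelian base — as already for non-isotrivial elliptic surfaces, where the canonical bundle formula produces a positive moduli contribution from the base. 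The whole point is that $g$ is \emph{smooth}, so no such degeneration contribution can arise.

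The main work, therefore, is to show that a smooth morphism to a semi-abelian variety contributes nothing to the log-Kodaira dimension beyond its fibres. I would establish this through generic vanishing on $B$ in the spirit of Popa--Schnell, now in the logarithmic, semi-abelian setting: analyzing the pushforwards $g_*\,\omega_{X/\overline B}^{\log,\otimes m}$, or better the mixed-Hodge-module pushforward of $\mathbb{Q}^H_X$, one studies the cohomology support loci inside the character variety $\mathrm{Char}(B)$. Smoothness of $g$ (vanishing defect of semismallness) should force these loci to be torsion-translated subtori of the expected codimension, which prevents the pluricanonical system from acquiring any positive variation in the base directions and pins the Iitaka dimension down to $\dim F=n-k$. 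I expect this to be the hard part, for two reasons: one must set up generic vanishing for the open part $U$ via Deligne's canonical extension and the character variety rather than $\mathrm{Pic}^0$, and one must handle the extension of $B$ by a torus (the non-compact directions), where the Hodge-theoretic positivity and the splitting into abelian and toric parts require care. A more geometric route to the same crux is to invoke Kawamata's structure theory for morphisms to semi-abelian varieties together with the hyperbolicity and rigidity of moduli — semi-abelian varieties admit no nonconstant variation of the fibres — to conclude that $g$ is isotrivial and hence $\kappa(X,D)=\kappa(F,D_F)\le n-k$.
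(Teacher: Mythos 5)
There is a genuine gap, and it occurs at the very first structural step of your argument. A linear subspace $W\subset H^0\left(X,\Omega^1_X\left(\log D\right)\right)=T_1\left(U\right)$ does \emph{not} in general ``cut out a $k$-dimensional quotient semi-abelian variety $B$'' of the quasi-Albanese $T_U$. Quotients of $T_U$ correspond to algebraic subgroups, i.e.\ to subspaces of $\operatorname{Lie}\left(T_U\right)$ compatible with the lattice and the extension structure; a generic subspace of the dual space $T_1\left(U\right)$ has an annihilator that is not of this form. Already for $U=X$ with Albanese a \emph{simple} abelian variety of dimension $\geq 2$ there are no proper positive-dimensional quotients at all, so no morphism $g\colon X\to\overline B$ with $g^*T_1\left(B\right)=W$ exists; the integrable distribution $\bigcap_{\omega\in W}\ker\omega$ is a foliation with (typically dense, non-algebraic) leaves, and one cannot perturb $W$ to a ``rational'' subspace since openness of the no-zero condition does not help when the set of rational subspaces is empty. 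The paper sidesteps this by running the argument in the opposite direction: the Iitaka fibration of $\left(X,D\right)$ produces, via Kawamata's theorems, a \emph{canonical algebraic} quotient $T$ of $T_U$ of dimension $\geq \dim T_U-\left(\dim X-\kappa\left(X,D\right)\right)$ (the fibres of the Iitaka fibration map into translates of a single quasi-abelian subgroup, of which there are only countably many), and then Theorem \ref{T:over quasi-abelian variety} shows that \emph{every} log-one-form pulled back from $T$ acquires a zero. This yields a subspace of codimension at most $\dim X-\kappa\left(X,D\right)$ consisting of forms \emph{with} zeros, which $W$ must meet trivially — no fibration attached to $W$ itself is ever needed.

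Even setting this aside, your concluding step is circular relative to the paper's logic. The assertions you want to invoke — that a log-smooth morphism onto a (compactified) quasi-abelian variety forces $\kappa\left(X,D\right)\leq\dim X-\dim B$, and that such a family is isotrivial — are precisely Corollaries \ref{C:log smooth}, \ref{C:log smooth*} and \ref{C:fiber of lg} of the paper, i.e.\ consequences of Theorem \ref{T:log pair}, not inputs to it; and isotriviality alone does not give $\kappa\left(X,D\right)=\kappa\left(F,D_F\right)$ without further work. The generic-vanishing alternative you sketch (``cohomology support loci are torsion-translated subtori of the expected codimension, which pins down the Iitaka dimension'') is the actual analytic heart of the problem and is not carried out; in the paper it takes the form of Theorem \ref{T:over quasi-abelian variety}, proved via a cyclic cover, the Koszul complex on $X\times V$, and the torsion-freeness of $p_{V*}R^0\hat g_*\left(\hat{\mathcal L}^{-1}\otimes C_{Y,D^Y}\right)$, which in turn rests on the logarithmic comparison for mixed Hodge modules and a new Kodaira--Saito-type vanishing theorem. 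So both the geometric set-up and the hard cohomological step of your proposal would need to be replaced before this becomes a proof.
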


This is a corollary to the following main theorem of this paper. Before stating the theorem, we recall the notations and some propositions about qusi-abelian varieties in the sense of Iitaka, given in \cite{W16}. See also \cite{Ii76} and \cite{Fu14} for more details.

\begin{definition}
$T^{r,d}$ is a quasi-abelian variety (in the sense of Iitaka), if it is an extension of a $d$-dimensional abelian variety $A^d$ by an algebraic torus $\mathbb{G}_m^{r}$, i.e. it is a connected commutative algebraic group which has the following Chevalley decomposition
$$1\to \mathbb{G}_m^r\to T^{r,d}\to A^d\to 1.$$
In particular, $T^{r,d}$ is a principal $\mathbb{G}_m^r$-bundle over $A^d$.
\end{definition}
Consider the following group homomorphism:
$$\rho: \mathbb{G}^r_m\to PGL(r,\mathbb{C}),$$
given by
$$\rho(\lambda_1,...,\lambda_r)=
\begin{bmatrix}
    1 &   &     &0    \\
      &\lambda_1 &   &   \\
      &   & \ddots &   \\
       0&    &  & \lambda_r
\end{bmatrix}
$$
Let $P^{r,d}:=T^{r,d}\times_{\rho}\mathbb{P}^r=T^{r,d}\times\mathbb{P}^r/\mathbb{G}^r_m$, which is a $\mathbb{P}^r$-bundle over $A^d$. We can view $P^{r,d}$ as a compactification of $T^{r,d}$ which naturally carries the $T^{r,d}$ action on it and denote the boundary divisor $L$, which is of simple normal crossing and is $T^{r,d}$-invariant for each stratum.

We say that $\left(X,D\right)$ is a log-smooth pair if $X$ is a smooth variety and $D$ is a reduced divisor on $X$ with normal crossing support. Given two log smooth pairs $\left(X,D^X\right)$ and $\left(Y,D^Y\right)$, and a morphism $f:X\to Y$, we say that $f$ is a morphism of log-pairs and denoted by $f: \left(X,D^X\right)\to \left(Y,D^Y\right)$, if 
$$f^{-1}D^Y:=\text{Supp}\left(f^*D^Y\right)\subset D^X.$$

Given a holomorphic log-one-form $\theta$ on a log smooth pair $\left(X,D\right)$, we use $Z\left(\theta\right)$ to denote the zero-locus of $\theta$ as a global section of the locally free sheaf $\Omega^1_X\left(\log  D\right)$.
\begin{theorem}\label{T:over quasi-abelian variety}
Let $\left(X,D\right)$ be a projective log-smooth pair, $\left(P^{r,d},L\right)$ be the canonical $\mathbb{P}^r$-bundle compactification of a quasi-abelian variety $T^{r,d}$, with the boundary divisor $L$. Denote by $p:P^{r,d}\to A^d$, the natural projection. Given a morphism of log-pairs $f:\left(X,D\right)\to \left(P^{r,d},L\right)$, if there is a positive integer $k$ and an ample line bundle $\mathcal{A}$ on $A^d$, such that 
$$H^0\left(X, \left(\omega_X\left(D\right)\right)^{\otimes k}\otimes f^*\left(p^*\mathcal{A}^{-1}\otimes \mathcal{O}_{P^{r,d}}\left(-L\right)\right)\right)\neq 0,$$
then $Z\left(f^*\omega\right)\neq \emptyset$, for any $\omega\in H^0\left(\Omega^1_{P^{r,d}}\left(\log   L\right)\right).$

Further, for generic such $\omega$, we have $Z\left(f^*\omega\right)\cap\left(X\setminus f^{-1}\left(L\right)\right)\neq \emptyset$.
\end{theorem}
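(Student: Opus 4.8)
The plan is to carry the method of Popa and Schnell \cite{PS14} over to the logarithmic, quasi-abelian setting, using the morphism $f$ together with the projection $p\colon P^{r,d}\to A^d$ in place of an Albanese map to an abelian variety, and replacing the classical generic vanishing theorem by its logarithmic counterpart over $A^d$ from \cite{W16}. The two ingredients are: (i) a nowhere-vanishing log-one-form forces an exactness (Koszul) statement, and (ii) the hypothesis on $H^0$ is incompatible with that exactness once generic vanishing over $A^d$ is available. Throughout write $n=\dim X$ and $g:=p\circ f\colon X\to A^d$.

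First I would argue by contradiction: suppose $Z(f^*\omega)=\emptyset$, so that $\eta:=f^*\omega$ is a nowhere-vanishing global section of $\Omega^1_X(\log D)$. Then wedging with $\eta$ makes the logarithmic Koszul complex $\left(\Omega^\bullet_X(\log D),\ \wedge\,\eta\right)$ exact. Equivalently, setting $Q:=\Omega^1_X(\log D)/\mathcal{O}_X\cdot\eta$, a vector bundle of rank $n-1$, one gets short exact sequences $0\to\bigwedge^{p-1}Q\to\Omega^p_X(\log D)\to\bigwedge^p Q\to 0$ for every $p$, and in particular the determinant identity $\omega_X(D)=\det\Omega^1_X(\log D)=\det Q$.

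The core of the argument is to contradict the section supplied by the hypothesis. Let $0\neq s\in H^0\!\left(X,(\omega_X(D))^{\otimes k}\otimes f^*(p^*\mathcal{A}^{-1}\otimes\mathcal{O}_{P^{r,d}}(-L))\right)$, and set $B:=(\omega_X(D))^{\otimes(k-1)}\otimes f^*(p^*\mathcal{A}^{-1}\otimes\mathcal{O}_{P^{r,d}}(-L))$, so that $s$ lives in $H^0(X,\Omega^n_X(\log D)\otimes B)$. Twisting the exact Koszul complex by $B$ and taking hypercohomology produces a spectral sequence $E_1^{p,q}=H^q(X,\Omega^p_X(\log D)\otimes B)\Rightarrow 0$. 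To extract a contradiction I would push forward along $g$ and invoke the logarithmic generic vanishing theorem of \cite{W16}: the relevant logarithmic Hodge-type direct images $R^q g_*\!\left(\Omega^p_X(\log D)\otimes f^*\mathcal{O}_{P^{r,d}}(-L)\right)$ are generic vanishing sheaves on the abelian variety $A^d$. Twisting by the pullback of the sufficiently negative $\mathcal{A}^{-1}$ should then kill all of the spectral sequence except along a single row, after which the surviving differentials -- read through the Fourier--Mukai transform over $\mathrm{Pic}^0(A^d)$ with $\omega$ interpreted as a deformation direction -- are forced to annihilate the class of $s$, contradicting $s\neq 0$. The main obstacle is exactly this vanishing step. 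One difficulty is that the extra factor $(\omega_X(D))^{\otimes(k-1)}$ is not itself pulled back from $A^d$ and so must be absorbed into the Hodge-theoretic pushforward; this is precisely where the effectivity encoded by $s$ is used. A second difficulty is that one must establish generic vanishing in the presence of both the boundary $L$ (log-poles) and the toric directions of $T^{r,d}$. Here the two twists play complementary roles -- $\mathcal{O}_{P^{r,d}}(-L)$ trades log-poles for genuine positivity so that a Nakano--Kawamata--Viehweg-type vanishing applies on the compactification, while $p^*\mathcal{A}^{-1}$ provides the genericity in $\mathrm{Pic}^0(A^d)$ needed to activate generic vanishing -- and making the two compatible simultaneously is the technical crux.

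For the final, generic statement I would use the residues along the boundary. A log-one-form $\omega\in H^0(\Omega^1_{P^{r,d}}(\log L))$ decomposes into an abelian part pulled back from $A^d$ and a toric part built from the invariant forms $d\lambda_i/\lambda_i$, whose residue along the $i$-th component of $L$ is a constant $c_i$; for generic $\omega$ all $c_i\neq 0$. Since $f$ is a morphism of log-pairs, every component of $f^{-1}(L)$ is a component of $D$ mapping into some $L_i$, and there $f^*\omega$ acquires the nonzero residue, so its $\tfrac{dx}{x}$-component (for $x$ a local equation of that boundary component) is nonzero; hence $f^*\omega$ vanishes at no point of $f^{-1}(L)$. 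Combined with the first part, which already gives $Z(f^*\omega)\neq\emptyset$, the zero-locus must then meet the open part $X\setminus f^{-1}(L)$, as claimed.
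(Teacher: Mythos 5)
Your starting point (a nowhere-vanishing $f^*\omega$ makes the log Koszul complex exact, to be played against the pluricanonical section) is indeed how the paper begins, and your residue argument for the second statement is essentially the paper's Lemma \ref{L:g} and Step 0. But the two places you yourself flag as the ``technical crux'' are genuine gaps, not technicalities. First, you never resolve the $k$-th power. The paper (following \cite{PS14}) does this with two covering constructions that are indispensable: the multiplication-by-$k$ map $[k]\colon P^{r,d}\to P^{r,d}$ (finite, ramified only over $L$) reduces the hypothesis to a section of $\left(\omega_{X}\left(E\right)\otimes f^*p^*\mathcal{A}^{-1}\right)^{\otimes k}$ with $E=D-f^{-1}L$, and the cyclic cover $\psi\colon Y\to X$ attached to that section produces a tautological section of $\psi^*\mathcal{B}$ and hence an injection of $\psi^*\left(\mathcal{B}^{-1}\otimes \Omega^\bullet_X\left(\log D\right)\right)$ into $\Omega^\bullet_Y\left(\log D^Y\right)$. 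Only after this does the problem become Hodge-theoretic (about $\widetilde{\omega}_Y$), and only then can the contradiction be extracted, concretely from the non-vanishing of $H^0\left(P, g_*\mathcal{O}_Y\left(\psi^*f^{-1}L-g^{-1}L\right)\right)$. ``Absorbing the extra factor $(\omega_X(D))^{\otimes(k-1)}$ into the pushforward'' is not a step one can take; it is the step the coverings exist to perform, and your proposal omits them.

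Second, the vanishing you invoke does not exist in the form you need, and the Fourier--Mukai mechanism over $\mathrm{Pic}^0(A^d)$ cannot see the toric directions: a form $d\lambda_i/\lambda_i$ is not a tangent direction of $\mathrm{Pic}^0(A^d)$, so pushing everything down to $A^d$ discards exactly the $r$ coordinates of $\omega$ that make this theorem harder than the abelian case. The paper instead works over the full parameter space $V=H^0\left(\Omega^1_{P^{r,d}}\left(\log L\right)\right)$ of dimension $r+d$, builds the Koszul complex $C_{Y,D^Y}$ on $Y\times V$, and reduces everything to a single torsion-freeness statement (the Main Claim) for $p_{V*}R^0\hat g_*\left(\hat{\mathcal{L}}^{-1}\otimes C_{Y,D^Y}\right)$. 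That claim is proved not by generic vanishing (which the paper deliberately avoids, even in the abelian case of Section 3) but by Grothendieck duality together with a new Kodaira--Saito-type vanishing for $\mathrm{Gr}^F_p\,\textsl{Sp}_X\,\mathcal{M}[*D]$ and $\mathcal{M}[!D]$ twisted by a semi-ample line bundle, via the logarithmic comparison for mixed Hodge modules (Sections 5--6); this is also where the reduction to $r=0$ or $1$ becomes necessary, an issue your proposal does not address. As written, your argument asserts the conclusion of this entire core (``the surviving differentials are forced to annihilate the class of $s$'') without providing a mechanism for it.
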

\begin{remark}\label{R:holds for any r}
Actually, we only need to show the case for $r=0$ or $1$. We refer to the proof of Theorem \ref{T:over P^{r,d}} below for the details.
\end{remark}

\begin{remark}\label{R:generic}
The two statements in the previous theorem are actually equivalent to each other. In particular, it is a problem that only depends on the morphism over $T^{r,d}$. See Step 0 of the proof of this theorem in Section 4 for details.
\end{remark}

There are two important applications of Theorem \ref{T:log pair}. One of them is that we get an affirmative answer (in a much more general setting) to a question posed by F. Catanese and M. Schneider, Corollary \ref{C:log smooth} and Corollary \ref{C:log smooth*}. Another application is that we give an answer to the algebraic hyperbolicity part of Shafarevich's conjecture, with the generic fiber being klt and of log-general type, Corollary \ref{C:fiber of lg}. E. Viehweg and K. Zuo have shown the previous two questions in the case that the base is one dimensional and the general fibers are projective and smooth, \cite{VZ01}.

In Section 2, we first prove Theorem \ref{T:log pair} assuming Theorem \ref{T:over quasi-abelian variety}. Then we show the geometric applications mentioned above. In Section 3, we give a simplified proof of the main theorem in \cite{PS14} without applying the generic vanishing of mixed Hodge modules on abelian varieties, \cite{PS13}. In Section 4, assuming Claim \ref{C:main claim} (Main Claim), we prove Theorem \ref{T:over quasi-abelian variety}. In Section 5, we first recall some results about logarithmic comparison in mixed Hodge modules in \cite{W17a}. Then we show a vanishing result that will be used to prove the Main Claim. In Section 6, we relate the Main Claim with the theory of mixed Hodge modules and prove it.

All the \emph{varieties} that appear in this paper are assumed to be reduced but possibly reducible separated schemes of finite type over the field of complex number $\mathbb{C}$. We use strict right $\mathscr{D}$-modules to represent \emph{mixed Hodge modules}, forgetting the weight filtration. All mixed Hodge modules in this paper are assumed to be algebraic. In particular, they are assumed to be extendable and polarizable, \cite{Sa90}.

\begin{ack}
The author would like to express his gratitude to his advisor Christopher Hacon for suggesting this topic and useful discussions. The author thanks Honglu Fan, Kalle Karu, Mihnea Popa, Christian Schnell, Lei Wu and Ziwen Zhu for answering his questions, and especially Schnell for suggesting a better choice of Hodge module which is used in the paper that essentially simplifies the proof of the main theorem. 

During the preparation of this paper, the author was partially supported by DMS-1300750, DMS-1265285 and a grant from the Simons Foundation, Award Number 256202.
\end{ack}

\section{Proof of Theorem \ref{T:log pair} and applications}
We start by showing that Theorem \ref{T:over quasi-abelian variety} implies the following
\begin{theorem}\label{T:over P^{r,d}}
Given a morphism of log smooth pairs $f:\left(X,D\right)\to \left(P^{r,d}, L\right)$, where $\left(P^{r,d},L\right)$ is the canonical $\mathbb{P}^r$-bundle compactification of a quasi-abelian variety $T^{r,d}$, with the boundary divisor $L$,
then there exists a linear subspace $W\subset H^0\left(P^{r,d},\Omega^1_{P^{r,d}}\left(\log   L\right)\right)$ of co-dimension $\dim X-\kappa\left(X, D\right)$, such that $Z\left(f^*\omega\right)\neq \emptyset$, for any $\omega\in W$.
\end{theorem}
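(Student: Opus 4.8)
The plan is to deduce Theorem~\ref{T:over P^{r,d}} from Theorem~\ref{T:over quasi-abelian variety} by \emph{replacing the target $T^{r,d}$ with a quotient quasi-abelian variety} on which the pluricanonical hypothesis of Theorem~\ref{T:over quasi-abelian variety} can be verified. The two ingredients are the log-Iitaka fibration of $(X,D)$ and Kawamata's structure theory for morphisms to quasi-abelian varieties. Throughout, since the map $h=q\circ f$ produced below is an honest morphism, I will apply Theorem~\ref{T:over quasi-abelian variety} directly on $X$, so no zero-locus ever has to be descended from a birational model; such a model is used only to establish a cohomological nonvanishing on $X$.

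\emph{Construction of the quotient.} Consider the log-Iitaka fibration $\pi\colon X\dashrightarrow Y$ of $(X,D)$, with $\dim Y=\kappa(X,D)$ and general fiber $(F,D_F)$ of log-Kodaira dimension $0$, so that $\dim F=\dim X-\kappa(X,D)$. By Kawamata's theorem, for general $F$ the restriction $f|_F$ maps $F$ onto a translate of a sub-quasi-abelian variety $T_V\subseteq T^{r,d}$ that is independent of $F$, and when $f|_F$ is generically finite onto its image one has $\dim T_V=\dim F=\dim X-\kappa(X,D)$. Let $q\colon T^{r,d}\to T'':=T^{r,d}/T_V$ be the quotient, with canonical compactification $(P'',L'')$ and projection $p''\colon P''\to A''$, and set $h:=q\circ f\colon (X,D)\to (P'',L'')$. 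Because $f$ sends each general fiber of $\pi$ into a single fiber of $q$, the map $h$ contracts the fibers of $\pi$ and factors (rationally) through $Y$. Define
\[
W:=q^{*}H^0\!\left(P'',\Omega^1_{P''}(\log L'')\right)\subseteq H^0\!\left(P^{r,d},\Omega^1_{P^{r,d}}(\log L)\right).
\]
Since $q$ is a surjective homomorphism, $q^{*}$ is injective on invariant log-one-forms, so $\operatorname{codim}W=\dim T^{r,d}-\dim T''=\dim T_V=\dim X-\kappa(X,D)$.

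\emph{Verifying the hypothesis and concluding.} To apply Theorem~\ref{T:over quasi-abelian variety} to $h$, I must produce, for some $k$ and some ample $\mathcal{A}$ on $A''$, a nonzero section of $(\omega_X(D))^{\otimes k}\otimes h^{*}(p''^{*}\mathcal{A}^{-1}\otimes\mathcal{O}_{P''}(-L''))$. Because $h$ factors through $\pi$, the twisting bundle $h^{*}(p''^{*}\mathcal{A}^{-1}\otimes\mathcal{O}_{P''}(-L''))$ is pulled back under $\pi$ from a fixed line bundle $B^{-1}$ on $Y$; meanwhile, by the defining property of the Iitaka fibration there is $k_0$ and a big line bundle $\mathcal{L}$ on $Y$ with $\pi^{*}\mathcal{L}\hookrightarrow(\omega_X(D))^{\otimes k_0}$. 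Bigness of $\mathcal{L}$ gives $H^0(Y,\mathcal{L}^{\otimes N}\otimes B^{-1})\neq0$ for $N\gg0$, and pulling back furnishes the desired section with $k=Nk_0$. Theorem~\ref{T:over quasi-abelian variety} then yields $Z(h^{*}\omega'')\neq\emptyset$ for every $\omega''\in H^0(P'',\Omega^1_{P''}(\log L''))$. Since $f^{*}(q^{*}\omega'')=h^{*}\omega''$, this gives $Z(f^{*}\omega)\neq\emptyset$ for all $\omega\in W$, which is the assertion; if a larger space already has this property I intersect with a generic subspace to reach codimension exactly $\dim X-\kappa(X,D)$.

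\emph{Main obstacle.} The crux is the nonvanishing step: showing that the numerical equality $\dim Y=\kappa(X,D)$ translates, \emph{after} passing to the quotient $T''$, into an actual twisted-pluricanonical section absorbing the negative contribution of $p''^{*}\mathcal{A}^{-1}\otimes\mathcal{O}_{P''}(-L'')$. This is where the choice of $T_V$ is forced: without contracting the log-Iitaka fibers the twisting bundle is not pulled back from $Y$ and the big-minus-fixed argument fails. The essential structural input is Kawamata's characterization that the $\kappa=0$ fibers map generically finitely onto translates of a single sub-quasi-abelian variety, which simultaneously pins down $\dim T_V=\dim X-\kappa(X,D)$ and the factorization through $Y$. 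The remaining care is routine but must be handled: the base-locus and rationality technicalities of the Iitaka fibration, the case in which $f|_F$ is not generically finite (then $f$ contracts a positive-dimensional subvariety, every $f^{*}\omega$ already vanishes along it, and one simply takes any subspace of the required codimension), and the implicit inequality $\dim X-\kappa(X,D)\le\dim H^0(P^{r,d},\Omega^1_{P^{r,d}}(\log L))$, which holds in the situations relevant to Theorem~\ref{T:log pair}.
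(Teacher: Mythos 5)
Your overall strategy coincides with the paper's: run the log-Iitaka fibration, use Kawamata's structure theorems to see that the (very) general fiber maps onto a translate of a single sub-quasi-abelian variety $T_V$ with $\dim T_V\le\dim X-\kappa(X,D)$, pass to the quotient $T''=T^{r,d}/T_V$, take $W=q^*H^0(\Omega^1_{P''}(\log L''))$, verify the nonvanishing hypothesis of Theorem~\ref{T:over quasi-abelian variety} by a big-minus-fixed argument on the Iitaka base, and conclude. (One small omission on the way: to get a \emph{single} $T_V$ working for all very general fibers you need the countability of sub-quasi-abelian subgroups, Proposition~\ref{P:coutable subgroup}; Kawamata alone gives a possibly fiber-dependent subvariety.)

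The genuine gap is your opening assertion that $h=q\circ f$ is ``an honest morphism,'' which you use to avoid ever passing to a birational model. The quotient homomorphism $q\colon T^{r,d}\to T''$ extends only to a \emph{rational} map of the compactifications $P^{r,d}\dashrightarrow P''$ (already for $\mathbb{G}_m^2\to\mathbb{G}_m$, $(x,y)\mapsto xy$, the induced $\mathbb{P}^2\dashrightarrow\mathbb{P}^1$ has indeterminacy points on the boundary), so $h$ is a priori only a rational map with indeterminacy inside $D$. Resolving it forces blow-ups along $D$, and an arbitrary log-resolution can create new zeros of pulled-back log-one-forms, which would destroy the descent of $Z(\hat f^*\omega)\ne\emptyset$ back to $X$. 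This is exactly the technical heart of the paper's Section~2: one resolves $q$ by a \emph{toroidal} log-resolution $(\hat P^{r,d},\hat L)\to(P^{r,d},L)$, lifts to a toroidal log-resolution $(\hat X,\hat D)\to(X,D)$, and uses that toroidal modifications preserve the (non)existence of zeros of log-one-forms. Without this input your argument does not close. A secondary, smaller error: your fallback when $f|_F$ is not generically finite --- that a contracted positive-dimensional subvariety forces every $f^*\omega$ to vanish --- is false (for a projection $X=\mathbb{P}^1\times E\to E$ the pullback of a nonzero one-form is nowhere zero as a section of $\Omega^1_X$); fortunately that case needs no special treatment, since $\dim T_V\le\dim F$ always holds and a $W$ of smaller codimension can simply be cut down.
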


\begin{proof}
Denote $U=X\setminus D$. The statement is vacuous when $\kappa\left(X,D\right)=-\infty$, so we can assume that $\kappa\left(X,D\right)\geq 0.$ 
Let $\mu:\left(X', D'\right)\to \left(X,D\right)$ be a birational modification, such that $g:X'\to Z$ is a smooth model for the Iitaka fibration associated to the log-canonical bundle $\omega_X\left(D\right)$. It is not hard to see that, if $\left(X'_z, D'_z\right)$ is a very general fiber over $Z$, then it is a log-smooth pair of dimension $\delta\left(U\right)=\dim U-\kappa\left(X,D\right)$ with log-Kodaira dimension $0$. Hence due to \cite[Theorem 28]{Ka81}, the quasi-albanese map of $X'_z\setminus D'_z$ is an open algebraic fiber space, and by \cite[Theorem 27]{Ka81}, it is not hard to see that its image in $T^{r,d}_U$ is a quasi-abelian variety. Further, by Proposition \ref{P:coutable subgroup} below, we know that there are at most countably many quasi-abelian sub-algebraic group in $T^{r,d}_U$. Hence, the image in $T^{r,d}_U$ of every fiber of $g$ is a dense subset of a translate of a single quasi-abelian sub-algebraic group. Letting $T$ be the quotient of $T^{r,d}_U$ by that sub quasi-abelian sub-algebraic group, we have $\dim T\geq m-\delta\left(U\right)$, where $m=r+d=\dim T^{r,d}_U$. Hence we get an induced rational map $Z\dashrightarrow T$. Let $\left(P,L\right)$ be the projective-space-bundle compactification of $T$. Though $P^{r,d}\dashrightarrow P$ is just a rational map in general, after a toroidal log-resolution $\left(\hat{P}^{r,d},\hat{L}\right)\to \left(P^{r,d}, L\right)$, the induced map of log-pairs $\left(\hat{P}^{r,d},\hat{L}\right)\to \left(P,L\right)$ is a morphism. After making a toroidal log resolution 
$$\tau:\left(\hat{X}, \hat{D}\right)\to\left(X,D\right),$$
we can get a morphism of log-pairs $\left(\hat{X}, \hat{D}\right)\to \left(\hat{P}^{r,d},\hat{L}\right)$ (\cite{AK00} Section 1, Remark 1.4). Composing it with the previous one, we get a morphism of log-pairs
$$\hat{f}:\left(\hat{X}, \hat{D}\right)\to \left(P,L\right).$$
Consider the following commutative diagram:
\begin{center}
\begin{tikzcd}
\left(X',D'\right)\arrow[dashed]{r}\arrow{d}
&\left(\hat{X}, \hat{D}\right)\arrow{d}{\hat{f}}\\
Z\arrow[dashed]{r}
&\left(P,L\right).
\end{tikzcd}
\end{center}
Now for any line bundle $\hat{\mathcal{A}}$ on $P$, we have 
$$H^0\left(\hat{X}, \left(\omega_{\hat{X}}\left(\hat{D}\right)\right)^{\otimes k}\otimes \hat{f}^*\hat{\mathcal{A}} \right)\neq 0,$$
for some integer $k$. Since $\tau:\left(\hat{X}, \hat{D}\right)\to\left(X,D\right)$ is a toroidal log-resolution, we have 
that $\theta\in H^0\left(\hat{X},\Omega_{X}^1\left(\log   D\right)\right)$ has zero-locus if and only if $\tau^*\theta \in H^0\left(\hat{X},\Omega_{\hat{X}}^1\left(\log   \hat{D}\right)\right)$ has zero-locus. Hence now we only need to show that, for any $\omega\in H^0\left(P, \Omega_P^1\left(\log  L\right)\right)$, $\hat{f}^*\omega\in H^0\left(\hat{X},\Omega_{\hat{X}}^1\left(\log   \hat{D}\right)\right)$ will have zero-locus. Now we can conclude the proof by  Theorem \ref{T:over quasi-abelian variety}.

For the rest of the proof, we show that we actually just need the case that $r=0$ or $1$ in Theorem \ref{T:over quasi-abelian variety}. Denote $\left(P,L\right)$ by $\left(P^{r,d}, L\right)$ again to specify the dimension of the base quasi-abelian variety and projective space fiber, and $T^{r,d}:=P^{r,d}\setminus L$. If $r\geq 2$, we will argue that the statement can be reduced to the $r=1$ case. Consider the subgroup $\mathbb{G}_{m}^r$ of $T^{r,d}$, which is the fiber over $0$, the unit of $A^d$. Fix $\{x_1,...,x_r\}$, a global algebraic coordinate system on $\mathbb{G}_{m}^r$, and $x_1=...=x_r=1$ gives the unit $e$ of $\mathbb{G}_{m}^r$. Given a vector $\mathbf{a}:=[a_1,...,a_r]\in \mathbb{Z}^r$, assuming $\mathbf{a}\neq \mathbf{0}$, let's consider the divisor $G_\mathbf{a}$ of $\mathbb{G}_{m}^r$ defined by $x_1^{a_1}\cdot...\cdot x_r^{a_r}=1$. It is evident that it is actually a subgroup of $\mathbb{G}_{m}^r$, hence of $T^{r,d}$. Let's denote the algebraic quotient by
$$q_{\mathbf{a}}:T^{r,d}\to T^{r,d}/G_\mathbf{a}.$$
It is evident that $T^{r,d}/G_\mathbf{a}\simeq T^{1,d}_\mathbf{a}$, for some quasi-abelian variety $T^{1,d}_\mathbf{a}$ with indicated dimensions. Denote by $\left(P^{1,d}_\mathbf{a}, L\right)$, the canonical $\mathbb{P}^1$-bundle compactification of $T^{1,d}_\mathbf{a}$. As in the argument above, replacing $\left(\hat{X}, \hat{D}\right)$ and $\left(P^{r,d}, L\right)$ by a toroidal log-resolution, we obtain morphisms of log-pairs:
\begin{align*}
q_{\mathbf{a}}: \left(P^{r,d}, L\right)&\to \left(P^{1,d}_\mathbf{a}, L\right)\\
g_\mathbf{a}: \left(\hat{X}, \hat{D}\right)&\to \left(P^{1,d}_\mathbf{a}, L\right)
\end{align*}
Denote
$$W_\mathbf{a}=q_{\mathbf{a}}^* H^0\left(P^{1,d}_\mathbf{a}, \Omega^1_{P^{1,d}_\mathbf{a}}(\log L)\right)\subset H^0\left(P^{r,d}, \Omega^1_{P^{r,d}}(\log L)\right).$$
Note that $\cup_\mathbf{a}W_\mathbf{a}$ is a dense subset of $H^0\left(P^{r,d}, \Omega^1_{P^{r,d}}(\log L)\right)$. This is because $\{dx_1/x_1,...,dx_r/x_r\}$ gives a basis of $H^0\left(\Omega^1_{P^{r,d}}(\log L)\right)/p^*H^0\left(A^d, \Omega^1_{A^d}\right)$, and it is straight forward to check that $W_\mathbf{a}$ is the co-set 
$$\mathbb{C}\cdot \left(a_1dx_1/x_1+...+a_rdx_r/x_r\right)+p^*H^0\left(A^d, \Omega^1_{A^d}\right).$$
Since $\hat{f}^*\omega$ has no zero-locus is an open condition on $\omega\in H^0\left(P^{r,d}, \Omega^1_{P^{r,d}}(\log L)\right)$, now it suffices to show the statement for each $g_\mathbf{a}: \left(\hat{X}, \hat{D}\right)\to \left(P^{1,d}_\mathbf{a}, L\right)$, which is implied by Theorem \ref{T:over quasi-abelian variety}.
\end{proof}
\begin{remark}\label{R:toric compactification}
For a fixed quasi-abelian variety $T^{r,d}$, fixing any smooth projective $r$-dimensional toric variety $P^r$, we have the canonical $P^{r}$-bundle compactification of $T^{r,d}$, with a natural simply normal crossing boundary divisor $L$, and we also denote it by $\left(P^{r,d},L\right)$. All the $\left(P^{r,d}, L\right)$ that appear in this paper can be viewed in this setting.
\end{remark}

\begin{prop}\label{P:coutable subgroup}
There are at most countably many quasi-abelian sub-algebraic groups in a quasi-abelian variety.
\end{prop}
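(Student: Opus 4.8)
The plan is to exploit the analytic uniformization of a quasi-abelian variety together with the observation that a sub-algebraic group is pinned down by a rational subspace of a fixed finite-dimensional $\mathbb{Q}$-vector space, of which there are only countably many.

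First I would fix the exponential uniformization. Since $T^{r,d}$ is a connected commutative complex algebraic group, the exponential map realizes it as $\mathfrak{t}/\Lambda$, where $\mathfrak{t}=\mathrm{Lie}(T^{r,d})\cong\mathbb{C}^{m}$, $m=r+d$, and $\Lambda=\ker(\exp)\cong\mathbb{Z}^{r+2d}$ is a discrete subgroup. Set $\Lambda_{\mathbb{Q}}\subset\mathfrak{t}$ to be its $\mathbb{Q}$-span, a $\mathbb{Q}$-vector space of dimension $r+2d$. Every quasi-abelian sub-algebraic group $S$ is in particular connected, closed, and without unipotent part, so it is determined by its Lie algebra $W:=\mathrm{Lie}(S)\subset\mathfrak{t}$, a complex subspace, via $S=W/\Lambda_{S}$ with $\Lambda_{S}:=W\cap\Lambda$.

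The key step is to show that the assignment $S\mapsto V_{\mathbb{Q}}(S):=\Lambda_{S}\otimes_{\mathbb{Z}}\mathbb{Q}\subset\Lambda_{\mathbb{Q}}$ is injective. I first note that $\Lambda_{S}$ is saturated in $\Lambda$: if $n\lambda\in\Lambda_{S}$ with $\lambda\in\Lambda$ and $n\neq 0$, then $\lambda\in W$, whence $\lambda\in W\cap\Lambda=\Lambda_{S}$; thus $\Lambda_{S}=V_{\mathbb{Q}}(S)\cap\Lambda$. It then remains to recover $W$ from $\Lambda_{S}$, i.e. to prove $W=\mathbb{C}\cdot\Lambda_{S}$. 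This is where I use that $S$ is itself quasi-abelian: applying the Chevalley decomposition $1\to\mathbb{G}_m^{r_S}\to S\to A^{d_S}\to 1$ and passing to Lie algebras and period lattices, the torus part $\mathbb{Z}^{r_S}$ spans $\mathbb{C}^{r_S}$ over $\mathbb{C}$ while the abelian part is a full lattice spanning $\mathrm{Lie}(A^{d_S})$ over $\mathbb{R}$, and a short diagram chase yields $W=\mathbb{C}\cdot\Lambda_{S}$. Consequently $W$, and hence $S=W/\Lambda_{S}$, is recovered from $V_{\mathbb{Q}}(S)$, which gives the injectivity. I expect this complex-spanning property of the period lattice to be the only genuinely non-formal point, though it follows readily from the structure of $S$.

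Finally I would conclude by counting. A finite-dimensional $\mathbb{Q}$-vector space has only countably many $\mathbb{Q}$-subspaces, since each is the $\mathbb{Q}$-span of one of the countably many finite subsets of the countable set $\Lambda_{\mathbb{Q}}$. Therefore the image of the injection $S\mapsto V_{\mathbb{Q}}(S)$ is countable, and there are at most countably many quasi-abelian sub-algebraic groups of $T^{r,d}$.
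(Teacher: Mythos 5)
Your proof is correct, but it takes a genuinely different route from the paper's. The paper argues algebraically: it reduces (with the reduction asserted as evident) to the two pure cases of an abelian variety and an algebraic torus, handling the former by counting sublattices of a finite-rank lattice and the latter by showing that the algebraic endomorphisms of $\mathbb{G}_m$ form a countable set, so that subtori, being cut out by characters, are countable in number. You instead work uniformly with the analytic uniformization $T^{r,d}\cong\mathfrak{t}/\Lambda$, $\Lambda=\ker(\exp)\cong\mathbb{Z}^{r+2d}$, and show that a quasi-abelian subgroup $S=W/(W\cap\Lambda)$ is recovered from the $\mathbb{Q}$-subspace $(W\cap\Lambda)\otimes\mathbb{Q}\subset\Lambda_{\mathbb{Q}}$, of which there are only countably many. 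The one non-formal input, which you correctly isolate and verify, is the equality $W=\mathbb{C}\cdot(W\cap\Lambda)$; it holds because the period lattice of a torus spans its Lie algebra over $\mathbb{C}$ (though not over $\mathbb{R}$) while that of an abelian variety is full, and it is precisely where algebraicity of $S$ enters --- a generic connected complex Lie subgroup of $\mathfrak{t}/\Lambda$ fails it. Your version has the advantage of handling the possibly non-split extension in one stroke: the paper's reduction to the two pure cases leaves implicit the count of the ``graph-like'' subgroups that surject onto a sub-abelian variety while meeting $\mathbb{G}_m^r$ in a finite group, whereas these are automatically accounted for in your enumeration of $\mathbb{Q}$-subspaces of $\Lambda_{\mathbb{Q}}$. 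The paper's version, in exchange, stays within elementary algebraic considerations and avoids the exponential map altogether.
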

\begin{proof}
It is evident that we only need to show the case for any abelian variety and for any algebraic torus. For the abelian variety case, it is obvious by noting that there are at most countably many sub-lattice in a given lattice of finite dimension. For the algebraic torus case, it is not hard to see that we only need to show that there are at most countably many algebraic group auto-morphism $\mathbb{G}^1_m\to \mathbb{G}^1_m$. Fix a global coordinator $t$, the auto-morphism will be given by the map of function $t\mapsto P[t,t^{-1}]$, where $P[t,t^{-1}]$ is a two-variable polynomial. Since it needs to be a group morphism, we have 
$$P\left[t^k,t^{-k}\right]=\left(P\left[t,t^{-1}\right]\right)^k,$$
for any $k\in \mathbb{Z}$. Hence it is not hard to conclude that $P[t,t^{-1}]=t^m$ for some $m\in \mathbb{Z}$, which has countably many choices.
\end{proof}

Recall that given a smooth quasi-projective variety $U$ and a log-smooth compactification $\left(X,D\right)$ of $U$, we have that $T_1\left(U\right):=H^0\left(X, \Omega_X^1\left(\log   D\right)\right),$ which does not depend on the compactification, \cite[2.4]{Fu14}. Further, we canonically have a quasi-albanese map $a_U:U\to T_U$, such that $a_U^*\left(T_1\left(T_U\right)\right)=T_1\left(U\right)$. $T_U$ is the quasi-albanese variety of $U$ which is a quasi-abelian variety and the quasi-albanese map $a_U$ is algebraic. We refer to \cite{Fu14}, \cite{Ii76} for the details of the quasi-albanese map.

To prove Theorem \ref{T:log pair} by applying the argument in the proof of the previous theorem, we need to construct such algebraic morphism of log-smooth pairs $f:\left(X,D\right)\to \left(P^{r,d},L\right)$. Ideally, we want to directly use the quasi-albanese map $a_U:U \to T^{r,d}$, and then compactify it and perform a log-resolution to get $f$. However, taking log-resolution may introduce new zero-loci for holomorphic log-one-forms. To keep track the zero-loci, we are only allowed to perform toroidal log-resolutions. That is the reason that we consider the following
\begin{lemma}\label{L:toroidal extension}
Fix a log smooth pair $\left(X,D\right)$, and denote $X\setminus D=U$. Assume that we have an algebraic morphism $f:U\to T^{r,d}$. Then there exists a toroidal log-resolution $\tau:\left(\hat{X},\hat{D}\right)\to \left(X,D\right)$ such that $f$ can be extended to get a morphism of log-pairs: $\hat{f}:\left(\hat{X},\hat{D}\right)\to \left(P^{r,d},L\right)$.
\end{lemma}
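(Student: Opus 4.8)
The plan is to strip off the abelian direction, pull back the compactification to $X$ so that the problem becomes genuinely relative-toric, and then resolve the resulting monomial section by a boundary-combinatorial (toroidal) modification.

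\emph{Reduction to a relative-toric problem.} First I would dispose of the abelian direction. The composition $p\circ f\colon U\to A^d$ is a rational map from the smooth variety $X$ to an abelian variety, hence extends to an everywhere-defined morphism $h\colon X\to A^d$ (a rational map from a smooth variety to an abelian variety has no indeterminacy). Writing $P^{r,d}=\mathbb{P}(E)$ for the rank-$(r+1)$ bundle $E=\mathcal{O}\oplus\mathcal{L}_1\oplus\cdots\oplus\mathcal{L}_r$ on $A^d$ determined by the extension data, I would form the pulled-back $\mathbb{P}^r$-bundle $\pi_1\colon \mathcal{P}:=X\times_{A^d}P^{r,d}=\mathbb{P}(h^*E)\to X$, which is Zariski-locally trivial, together with its relative boundary $\mathcal{L}:=X\times_{A^d}L$ and open part $\mathcal{T}:=X\times_{A^d}T^{r,d}$, a Zariski-locally trivial $\mathbb{G}_m^r$-torsor over $X$. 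Because $h|_U=p\circ f$, the map $f$ lifts to a rational section $\sigma\colon X\dashrightarrow \mathcal{P}$ (with $\pi_2\circ\sigma=f$) that is regular on $U$ and lands in $\mathcal{T}|_U$. Since the second projection $\pi_2\colon(\mathcal{P},\mathcal{L})\to(P^{r,d},L)$ is a morphism of log-pairs with $\pi_2^{-1}(L)=\mathcal{L}$, it suffices to extend $\sigma$, after a toroidal modification $\tau$ of $(X,D)$, to a morphism $\hat\sigma\colon\hat X\to\mathcal{P}$; composing with $\pi_2$ then produces $\hat f$. In fact the log-pair condition is automatic: a toroidal modification is an isomorphism over $U$, so $\hat X\setminus\hat D=U$ and $\hat\sigma|_U=\sigma$ lands in $\mathcal{T}$, forcing $\hat\sigma^{-1}(\mathcal{L})\subset\hat D$ and hence $\hat f^{-1}(L)\subset\hat D$. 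Thus the entire content is to turn the rational section $\sigma$ into a morphism by modifying only the boundary; if $r=0$ there is nothing to do, so I assume $r\ge 1$.

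\emph{Local toric model.} Over a Zariski-open $X_\alpha\subset X$ trivializing the torsor, $\mathcal{T}|_{X_\alpha}\cong X_\alpha\times\mathbb{G}_m^r$ and $\sigma$ is given by $r$ rational functions $g_1,\dots,g_r$ on $X_\alpha$ that are regular and invertible on $U\cap X_\alpha$, with induced map into the fibre $x\mapsto[1:g_1(x):\cdots:g_r(x)]$. Since $D$ is simple normal crossing, in local coordinates with $D=\{z_1\cdots z_k=0\}$ one has $g_i=u_i\prod_j z_j^{a_{ij}}$ with units $u_i$ and integers $a_{ij}=\mathrm{ord}_{D_j}(g_i)$, so $\sigma$ is a monomial map into the toric variety $\mathbb{P}^r$ up to units. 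Resolving the indeterminacy of such a map is the standard toric operation: the orders of the section organize into a linear map from the cone $\mathbb{R}_{\ge0}^{k}$ to $N_\mathbb{R}=\mathbb{R}^r$ given by the matrix $(a_{ij})$, and choosing a (regular) subdivision of $\mathbb{R}_{\ge0}^{k}$ mapping each subcone into a single cone of the fan $\Sigma_{\mathbb{P}^r}$ yields a toric blow-up over which $\sigma$ becomes a morphism into $\mathbb{P}^r$, while keeping the boundary simple normal crossing.

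\emph{Globalization and the main point.} The transition functions of the $\mathbb{G}_m^r$-torsor $\mathcal{T}\to X$ are everywhere-invertible on overlaps, hence have order zero along every component $D_j$, so the exponents $a_{ij}=\mathrm{ord}_{D_j}(g_i)$ are intrinsic to $\sigma$ and independent of the trivialization and of the chosen coordinates. Consequently these local data patch to a global morphism of cone complexes $\Sigma(X,D)\to\Sigma(P^{r,d},L)=\Sigma_{\mathbb{P}^r}$, and a compatible regular subdivision of $\Sigma(X,D)$ refining the pullback of $\Sigma_{\mathbb{P}^r}$ determines a single toroidal log-resolution $\tau\colon(\hat X,\hat D)\to(X,D)$ over which $\sigma$ extends to the desired $\hat\sigma$; this is exactly the toroidalization machinery of \cite{AK00} applied to the relative-toric target. \textbf{The main obstacle} is precisely this globalization: one must verify that the combinatorial resolution is canonical enough (via the intrinsic exponents) that the local toric subdivisions are mutually compatible and glue to a genuine toroidal modification of $(X,D)$, rather than an arbitrary Hironaka resolution of indeterminacy — the latter could introduce new zeros of holomorphic log-one-forms, which is exactly what toroidality is designed to avoid.
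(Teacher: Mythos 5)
Your proposal is correct, but it takes a genuinely different route from the paper's. The paper first extends $p\circ f$ to $X$ exactly as you do, but then reduces to $r=1$ by writing the $(\mathbb{P}^1)^r$-bundle compactification as a fibre product $P^{1,d}_1\times_{A^d}\cdots\times_{A^d}P^{1,d}_r$ and treating each factor separately; in the rank-one case it identifies the obstruction concretely as the \emph{bad} codimension-two strata, namely components of $D_1\cap D_2$ along which the vanishing orders $a_1,a_2$ of a rational function $f^*y$ cutting out $L_1-L_2$ satisfy $a_1a_2<0$, removes each by the explicit weighted toroidal blow-up of $\bigl(x_1^{b_1},x_2^{b_2}\bigr)$ with $a_1b_2+a_2b_1=0$, and then proves separately (via a rational-curve argument on exceptional divisors) that once no bad strata remain the map is everywhere defined. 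You instead keep $r$ arbitrary, pull the compactified bundle back to $X$ so that the problem becomes extending a rational section of a relative toric variety, observe that the section is monomial (unit times monomial) along $D$ because $f$ is regular and fibrewise invertible on $U$ and the torsor's transition functions are units, and then resolve by a global regular subdivision of the cone complex of $(X,D)$ refining the pullback of the fan of $\mathbb{P}^r$, appealing to the toroidalization machinery of \cite{AK00}/KKMS. Both arguments hinge on the same point — the indeterminacy is monomial, hence removable by boundary-combinatorial modifications only — but the paper's version is elementary and self-contained (and its bad-stratum criterion is exactly the $r=1$ instance of your condition that a cone fail to map into a single cone of the fan of $\mathbb{P}^1$), whereas yours is uniform in $r$ and more conceptual at the price of invoking the general subdivision/globalization theory, for which your justification (intrinsicness of the exponents $\mathrm{ord}_{D_j}(g_i)$, hence a well-defined piecewise-linear map of cone complexes) is the right one.
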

\begin{proof}
We first note that although we may not be able to extended $f$ onto $X$, the morphism $p\circ f:U\to A^d$ is well defined on $X$. This is because if not, by a sequence of blowing-ups of smooth center, we can get a morphism. However, each rational curve will map to a point on an abelian variety, hence these blow-ups are not needed.

Then we argue that it suffices to show the case that $r=1$. Actually, it is not hard to see that we only need to show the case that $\left(P^{r,d},L\right)$ is the $(\mathbb{P}^1)^r$-bundle compactification of $T^{r,d}$, Remark \ref{R:toric compactification}. Note that in this case, we can decompose
$$P^{r,d}=P^{1,d}_1\times_{A^d} P^{1,d}_2 \times_{A^d} ...\times_{A^d} P^{1,d}_r,$$
where $P^{1,d}_i$ is the $\mathbb{P}^1$-bundle compatification of a quasi-abelian variety $T^{1,d}_i$, with the indicated dimension. Hence to get a morphism of log-pairs $\hat{f}:\left(\hat{X},\hat{D}\right)\to \left(P^{r,d},L\right)$, we only need to get a morphism of log-pairs $\left(\hat{X},\hat{D}\right)\to \left(P^{1,d}_i,L\right)$ for each $i$. From now on, we only consider the case that $r=1$.

Denote the two components of $L$ by $L_1$ and $L_2$. It is evident that $L$ separates into two components by the construction of $P^{1,d}$. We also have that $L_1$ and $L_2$ are linearly equivalent (\cite[Exercise II.7.9]{Ha77}). Since $f$ is well-defined on $X$ except a co-dimension $2$ locus. Hence the zero order of $f^*L_1$ and $f^*L_2$ are well defined along each irreducible component $D_i$ of $D$. We denote by $D'$ and $D''$ the two sub-divisors of $D$ that consists of those irreducible components with positive zero order of $f^*L_1$ and $f^*L_2$ respectively. Since $L_1\cap L_2=\emptyset$, $D'$ and $D''$ have no common components. We say that a co-dimension-$2$ stratum $S$ of $D$ is \emph{bad} if $S$ is the generic locus of $D'_1\cap D''_1$, with $D'_1$ and $D''_1$ are irreducible components of $D'$ and $D''$ respectively. For the rest of the proof, we will show that \\
(1)If we have no bad stratum, then $f$ is well defined on $X$.\\
(2)We can find a sequence of toroidal resolutions on $(X,D)$ to eliminate bad strata.

For (1), if $f$ is not well defined on $X$, by a sequence of blowing-ups of smooth locus on $D$, we can get a morphism of log-pairs $\hat{f}:(\hat{X},\hat{D})\to (P^{1,d},L)$. Note that since we start with no bad stratum, it is evident that for each step of blow-up, we will not introduce new bad stratum. Let's consider the last step of the blow-ups and we denote it by $\pi:(\hat{X},\hat{D})\to (\overline{X},\overline{D})$, with $E\subset \hat{D}$ being the exceptional divisor. We only need to show that each $\mathbb{P}^1$ on $E$ maps to a point on $P^{1,d}$, which implies that this blow-up is not needed. Hence we can conclude (1) by induction. Otherwise, since each $\mathbb{P}^1$ on $E$ will map to a point on $A^d$, we can find a $\mathbb{P}^1$ on $E$ maps to a $\mathbb{P}^1$ fiber over a point $a\in A^d$ surjectively. In particular, there is one point $e_1\in E$ that maps to $L_1$ and another point $e_2\in E$ maps to $L_2$. Consider the two sub-divisors of $\hat{D}$: $\hat{D}':=\hat{f}^{-1}L_1$ and $\hat{D}'':=\hat{f}^{-1}L_2$. Obviously they have no common components, and $E$ does not belong to either one. However, we can find two irreducible components $\hat{D}_1$ and $\hat{D}_2$ such that $e_1\in \hat{D}_1\subset \hat{D}'$, and $e_2\in \hat{D}_2\subset \hat{D}''$. We denote by $\overline{D}_1$ and $\overline{D}_2$ the two irreducible components of $\overline{D}$ with their strict transform on $\hat{X}$ being $\hat{D}_1$ and $\hat{D}_2$ respectively. Let $\overline{S}$ be the stratum given by the generic locus of $\overline{D}_1\cap \overline{D}_2$ which is not empty by the construction. However it is not hard to see that $\overline{S}$ is a bad stratum. Hence we get a contradiction.

For (2), since $L_1$ and $L_2$ are linearly equivalent, we can find a rational function $y$ on $P^{1,d}$ such that its zero-locus and pole locus are $L_1$ and $L_2$ respectively. Since $f$ is well defined on $U$ and its image is in $T^{1,d}$, the zero-locus and the pole locus of the rational function $f^*y$ on $X$ has to be contained in $D$. Hence, around a stratum $S$, the generic locus of $\cap_{1\leq i\leq n}D_i$, up to a multiplication of a unit, we locally have $f^* y=x_1^{a_1}\cdot...\cdot x_n^{a_n}$, with $x_i$ are local functions that defines $D_i$, the irreducible components of $D$, and $a_i\in \mathbb{Z}$. Note that $a_i$ does not depend on the choice of the stratum. Pick a bad stratum $S$ of co-dimension-$2$. Assume $S$ is the generic locus of $D_1\cap D_2$. $S$ being bad is equivalent to that $a_1\cdot a_2<0$. We can find two co-prime integers $b_1, b_2$, such that $a_1\cdot b_2+a_2\cdot b_1=0$. By a blow-up of the ideal locally being $\left(x_1^{b_1},x_2^{b_2}\right)$, which is toroidal, we get $\pi:\left(X',D'+E\right)\to \left(X,D\right)$, where $D'$ is the strict transform of $D$ and $E$ is the exceptional divisor. 
Now by induction, we only need to show that those newly introduced co-dimension-$2$ strata of $D'+E$ are not bad. It is evident by noticing that the zero order of $(\pi\circ f)^*y$ along $E$ is $0$.

Note that $\left(X',D'+E\right)$ we construct above is not log-smooth in general, but after we eliminate all bad strata, we can perform a toroidal log-resolution to get a log-smooth pair we are after.
\end{proof}

Now, we are ready to show the proof of Theorem \ref{T:log pair} assuming Theorem \ref{T:over quasi-abelian variety}.

\begin{proof}[Proof of Theorem \ref{T:log pair}]
Following the argument of the proof of Theorem \ref{T:over P^{r,d}}, we are reduced to consider the rational map $g_\mathbf{a}:X\dashrightarrow P^{1,d}_\mathbf{a},$
which is well defined on $U$, $g_\mathbf{a}\big|_U:U\to T^{1,d}_\mathbf{a}$. Now apply the previous lemma, we can find a toroidal log-resolution $\left(\hat{X}, \hat{D}\right)\to (X,D)$ such that we honestly get a morphism of log pairs:
$$\hat{g}_\mathbf{a}:\left(\hat{X}, \hat{D}\right)\to \left(P^{1,d}_\mathbf{a},L\right).$$
Hence we can conclude the proof by the $r=0$ or $1$ case of Theorem \ref{T:over quasi-abelian variety} as in the proof of Theorem \ref{T:over P^{r,d}}.
\end{proof}

For the rest of the section, we state couple corollaries that follow by Theorem \ref{T:log pair}.

\begin{coro}
Given a smooth projective variety $X$, a global holomorphic one-form $\theta$ on $X$ with no zero-locus, and a smooth divisor $D$. If $\omega_X\left(D\right)$ is big, then $\theta|_D$ must have non-empty zero-locus as a global holomorphic one-form on $D$.
\end{coro}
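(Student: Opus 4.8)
The plan is to place ourselves directly in the setting of Theorem \ref{T:log pair} for the pair $\left(X,D\right)$, and then to match the logarithmic zero-locus of $\theta$ along $D$ with the ordinary zero-locus of the restricted form $\theta|_D$ on $D$. Since $X$ is smooth and $D$ is a smooth, hence reduced normal-crossing, divisor, $\left(X,D\right)$ is a projective log-smooth pair, and the hypothesis that $\omega_X\left(D\right)$ is big says exactly that $\kappa\left(X,D\right)=\dim X$.

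First I would view $\theta$ as a log-one-form via the natural inclusion $\Omega^1_X\hookrightarrow \Omega^1_X\left(\log D\right)$ and apply Theorem \ref{T:log pair}. Because $\dim X-\kappa\left(X,D\right)=0$, the theorem forces every linear subspace of $H^0\left(X,\Omega^1_X\left(\log D\right)\right)$ consisting of zero-free log-one-forms to be trivial. As $\theta$ is nonzero---its ordinary zero-locus being empty, it is not the zero form---the line $\mathbb{C}\cdot\theta$ cannot be such a subspace, so $\theta$ has nonempty log-zero-locus $Z\left(\theta\right)$.

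Next I would localize this zero-locus onto $D$. Away from $D$ the inclusion $\Omega^1_X\hookrightarrow\Omega^1_X\left(\log D\right)$ is an isomorphism, so off $D$ the log-zero-locus agrees with the ordinary zero-locus of $\theta$, which is empty by assumption; hence $Z\left(\theta\right)\subseteq D$ and, in particular, $Z\left(\theta\right)=Z\left(\theta\right)\cap D\neq\emptyset$. It remains to identify $Z\left(\theta\right)\cap D$ with the zero-locus of $\theta|_D$. Choosing local coordinates $\left(x_1,\dots,x_n\right)$ with $D=\{x_1=0\}$ and writing $\theta=\sum_i f_i\,dx_i$, the coefficients of $\theta$ in the log-frame $\{dx_1/x_1,dx_2,\dots,dx_n\}$ are $\left(x_1 f_1,f_2,\dots,f_n\right)$; along $D$ the first coefficient $x_1f_1$ vanishes automatically, so $Z\left(\theta\right)\cap D=\{x_1=0,\ f_2=\dots=f_n=0\}$. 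On the other hand, pulling back along $D\hookrightarrow X$ annihilates $dx_1$, whence $\theta|_D=\sum_{i\geq 2}\left(f_i|_D\right)dx_i$, whose zero-locus is precisely $\{f_2|_D=\dots=f_n|_D=0\}$. These loci coincide, so $Z\left(\theta|_D\right)=Z\left(\theta\right)\cap D\neq\emptyset$, as desired.

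The one delicate point is this last identification, and what makes it go through is that $\theta$ is an honest holomorphic one-form rather than merely a logarithmic one: its residue along $D$ vanishes, equivalently its $dx_1/x_1$-coefficient is divisible by $x_1$ and therefore drops out upon restriction to $D$. I expect no genuine obstacle beyond this bookkeeping---the substantive input is entirely Theorem \ref{T:log pair}, and the remaining work only translates the conclusion ``the log-one-form $\theta$ must vanish somewhere'' into the sought statement about $\theta|_D$.
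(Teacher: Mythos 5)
Your proposal is correct and follows essentially the same route as the paper: deduce from the bigness of $\omega_X\left(D\right)$ that the log-zero-locus of $\theta$ is non-empty (the paper invokes Theorem \ref{T:log general} directly rather than the $W=\mathbb{C}\cdot\theta$ instance of Theorem \ref{T:log pair}, an immaterial difference), observe that it must lie on $D$ since $\theta$ has no ordinary zeros, and then use the same local frame $\{dx_1/x_1,dx_2,\dots,dx_n\}$ computation to identify $Z\left(\theta\right)\cap D$ with $Z\left(\theta|_D\right)$. No gaps.
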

\begin{proof}
Let $x_1,...,x_n$ be a local holomorphic coordinate system of $X$ and $x_1$ defines $D$. Then $\{dx_1/x_1, dx_2,...,dx_n\}$ gives a local basis of $\Omega^1_X\left(\log D\right)$. Since we know that it has no zero-locus as holomorphic one-form, according to Theorem \ref{T:log general}, $\theta$ has a zero-locus at some point $p\in D$ as a global section of $\Omega^1_X\left(\log D\right)$. Hence it locally looks like 
$\theta=x_1 g_1dx_1/x_1+g_2dx_2...+g_ndx_n$, where $g_2\left(p\right)=...=g_n\left(p\right)=0$. Hence $\theta|_D\in H^0\left(D,\Omega_D\right)$ has zero-locus at $p$.
\end{proof}

\begin{definition}
Given a projective morphism of log pairs $f:\left(X,D^X\right)\to \left(Y,D^Y\right)$ and assume that $\left(X,D^X_{\text{red}}\right)$ and $\left(Y,D^Y_{\text{red}}\right)$ are log-smooth. We say that $f$ is log-smooth if the cokernel sheaf of the natural map 
$$f^*\Omega^1_Y\left(\log   D^Y\right)\to \Omega^1_X\left(\log   D^X\right),$$
is locally-free.
\end{definition}

\begin{coro}\label{C:log smooth}
If $f:\left(X,D\right)\to \left(P^{r,d},L\right)$ is a log-smooth morphism, from a log-smooth pair $\left(X,D\right)$ onto the canonical  $\mathbb{P}^r$-bundle compactification of a quasi-abelian variety $T^{r,d}$, of dimension $m=r+d$, then $m\leq \dim X- \kappa\left(X,D\right)$.
\end{coro}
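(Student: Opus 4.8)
The plan is to produce an $m$-dimensional linear subspace $W \subset H^0\left(X, \Omega^1_X\left(\log D\right)\right)$ all of whose nonzero elements have empty zero-locus, and then to invoke Theorem \ref{T:log pair} to conclude that $m = \dim W \leq \dim X - \kappa\left(X,D\right)$. The subspace $W$ will simply be the image under $f^*$ of the full space of global log-one-forms on $P^{r,d}$.

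First I would record two facts about $\left(P^{r,d}, L\right)$. The space $H^0\left(P^{r,d}, \Omega^1_{P^{r,d}}\left(\log L\right)\right)$ is precisely $T_1\left(T^{r,d}\right)$, the space of translation-invariant log-one-forms on the quasi-abelian variety $T^{r,d}$, and hence has dimension $m = r+d$; a basis is furnished by $\left\{dx_1/x_1, \ldots, dx_r/x_r\right\}$ together with $p^*H^0\left(A^d, \Omega^1_{A^d}\right)$, exactly as in the proof of Theorem \ref{T:over P^{r,d}}. Moreover, every nonzero such $\omega$ is invariant under the $T^{r,d}$-action, so at each point it is the translate of a nonzero covector, and therefore $\omega$ is nowhere-vanishing as a section of $\Omega^1_{P^{r,d}}\left(\log L\right)$.

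Next I would unwind the log-smoothness hypothesis. By definition the natural map $\phi: f^*\Omega^1_{P^{r,d}}\left(\log L\right) \to \Omega^1_X\left(\log D\right)$ has locally free cokernel. Since $f$ is surjective onto the $m$-dimensional $P^{r,d}$, the dual differential $df$ is generically surjective, so $\phi$ is generically injective; a rank count then shows that the sub-sheaf $\operatorname{im}\phi$ has rank $m$ everywhere and that $\phi$ realizes $f^*\Omega^1_{P^{r,d}}\left(\log L\right)$ as a sub-bundle of $\Omega^1_X\left(\log D\right)$, injective on every fiber. Consequently, for any nonzero $\omega$ as above, the pullback $f^*\omega$ is the image under the fiberwise-injective $\phi$ of the nowhere-vanishing section of $f^*\Omega^1_{P^{r,d}}\left(\log L\right)$ determined by $\omega$, hence $f^*\omega$ is itself nowhere-vanishing; that is, $Z\left(f^*\omega\right) = \emptyset$. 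In particular $f^*$ is injective on global log-one-forms, so $W := f^* H^0\left(P^{r,d}, \Omega^1_{P^{r,d}}\left(\log L\right)\right)$ has dimension $m$ and consists of log-one-forms with empty zero-locus. Applying Theorem \ref{T:log pair} to $W$ then yields $m = \dim W \leq \dim X - \kappa\left(X,D\right)$.

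The only step demanding genuine care is the passage from \emph{locally free cokernel} to \emph{fiberwise-injective sub-bundle inclusion}: I must verify that surjectivity of $f$ forces $\phi$ to be generically injective and that the locally free cokernel then upgrades this to injectivity on every fiber. This last point follows because the kernel of $\phi$ is a torsion-free subsheaf of the locally free sheaf $f^*\Omega^1_{P^{r,d}}\left(\log L\right)$ having generic rank zero, hence it vanishes, and the locally free quotient makes $\operatorname{im}\phi$ a local direct summand. Everything else is bookkeeping once these facts are in place.
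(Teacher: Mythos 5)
Your proposal is correct and follows the same route as the paper, which simply deduces the corollary from Theorem \ref{T:log pair}; you have merely spelled out the details (the $m$-dimensional space of invariant, nowhere-vanishing sections of the trivial bundle $\Omega^1_{P^{r,d}}\left(\log L\right)$, and the fact that log-smoothness plus surjectivity makes $f^*\Omega^1_{P^{r,d}}\left(\log L\right)\to\Omega^1_X\left(\log D\right)$ a fiberwise-injective sub-bundle inclusion). The verification that a torsion-free kernel of generic rank zero vanishes and that a locally free cokernel upgrades generic injectivity to injectivity on fibers is exactly the right bookkeeping.
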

\begin{proof}
It follows immediately by Theorem \ref{T:log pair}. See also Remark \ref{R:toric compactification}.
\end{proof}

\begin{coro}\label{C:log smooth*}
Let $\left(X,D\right)$ be a projective log-smooth pair of log general type. Assume that we have a surjective projective morphism $f: X\setminus D \to T^{r,d}$, where $T^{r,d}$ is a quasi-abelian variety.  Then, $f$ is not smooth.
\end{coro}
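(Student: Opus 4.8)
The plan is to argue by contradiction and reduce the statement to Corollary~\ref{C:log smooth}. Write $U:=X\setminus D$ and $m:=r+d=\dim T^{r,d}$; since a morphism to a point is automatically smooth, the content is for $m\ge 1$, which I assume. Because $(X,D)$ is of log general type we have $\kappa(X,D)=\dim X$, so the inequality furnished by Corollary~\ref{C:log smooth} reads $m\le \dim X-\kappa(X,D)=0$. Hence it suffices to prove: if $f$ were smooth, then $f$ would extend to a \emph{log-smooth} surjective morphism of log-pairs $\hat f:(\hat X,\hat D)\to(P^{r,d},L)$ for some log-smooth compactification $(\hat X,\hat D)$ of $U$. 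Indeed, Corollary~\ref{C:log smooth} applied to $\hat f$ would then give $m\le \dim\hat X-\kappa(\hat X,\hat D)=\dim U-\kappa(U)=0$, contradicting $m\ge 1$.

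The reduction exploits that $\kappa(X,D)=\kappa(U)$ depends only on $U$, not on the chosen log-smooth compactification, so I am free to pick the most convenient one. Assume $f$ is smooth. Since $f$ is moreover projective, every fibre of $f$ is a smooth \emph{proper} variety and $U$ is proper over $T^{r,d}$; in particular all non-properness of $U$ comes from the base, with nothing to compactify in the fibre direction. I would therefore spread $U$ out to a proper family over the fixed compactification $P^{r,d}$ (for instance as the closure of $U$ inside a suitable projective bundle over $P^{r,d}$), obtaining a proper morphism $\hat X_0\to P^{r,d}$ that agrees with $f$ over $T^{r,d}$ and is smooth there. A toroidal log-resolution $(\hat X,\hat D)\to(P^{r,d},L)$ in the sense of Lemma~\ref{L:toroidal extension} and \cite{AK00}, chosen to be an isomorphism over $T^{r,d}$, then produces a log-smooth compactification of $U$ all of whose boundary components map into $L$, together with a surjective morphism of log-pairs $\hat f$. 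Granting that $\hat f$ is log-smooth, the argument of the first paragraph applies and concludes the proof.

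The step I expect to be the main obstacle is precisely the verification that the extension $\hat f$ can be made log-smooth, i.e. that the cokernel of $\hat f^*\Omega^1_{P^{r,d}}(\log L)\to\Omega^1_{\hat X}(\log\hat D)$ is locally free. Equivalently, the $m$ translation-invariant log-one-forms that trivialise $\Omega^1_{P^{r,d}}(\log L)$, and which therefore have empty zero-locus, must pull back to $m$ pointwise-independent log-one-forms on all of $\hat X$; over $U$ this is exactly the smoothness hypothesis on $f$, and the task is to propagate it across $\hat D$. Because $f$ is smooth and proper over the interior, there are no horizontal boundary components and the whole difficulty is concentrated over $L$, where log-smoothness becomes a combinatorial condition on the associated map of cone complexes. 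The delicate point is that the target $T^{r,d}$ is fixed and may not be base-changed, so I must realise the log-smooth (semistable-type) model over $L$ by subdividing only the cone complex of $\hat X$; this is where I would invoke the toroidal machinery of \cite{AK00}, using crucially that $f$ is smooth over the whole of $T^{r,d}$ so that no alteration of the base is required. Alternatively, one may bypass Corollary~\ref{C:log smooth} and feed the $m$-dimensional space $\hat f^*H^0\!\big(\Omega^1_{P^{r,d}}(\log L)\big)$ directly into Theorem~\ref{T:log pair}, the content being identical: every nonzero such pullback has empty zero-locus once $\hat f$ is log-smooth.
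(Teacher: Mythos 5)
Your overall strategy (contradiction via the inequality $m\le\dim X-\kappa(X,D)=0$) is sound, but the step you yourself flag as ``the main obstacle'' is a genuine gap, and it is exactly where the argument breaks. To invoke Corollary~\ref{C:log smooth} you must extend $f$ to a \emph{log-smooth} morphism $\hat f:(\hat X,\hat D)\to(P^{r,d},L)$, i.e.\ make the $m$ pulled-back log-one-forms pointwise independent along the whole boundary $\hat D$. This is a semistable-reduction statement over $L$, and semistable (or even weakly semistable) reduction in the sense of \cite{AK00} requires a generically finite base change or at least a modification of the \emph{target}; smoothness of $f$ over the interior $T^{r,d}$ gives no control over the degeneration along $L$ and does not let you achieve log-smoothness by subdividing the cone complex of the source alone. (Already for a smooth projective family over $\mathbb{G}_m$ one cannot in general produce a log-smooth extension over $(\mathbb{P}^1,\{0,\infty\})$ without a ramified base change $t\mapsto t^n$.) Since the target must remain the fixed compactification $(P^{r,d},L)$ for Corollary~\ref{C:log smooth} to apply, the log-smooth model you need may simply not exist, and your closing alternative (feeding $\hat f^*H^0(\Omega^1_{P^{r,d}}(\log L))$ into Theorem~\ref{T:log pair}) relies on the same unavailable extension.

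The paper's proof sidesteps the boundary entirely: it only produces a morphism of log pairs $\hat f$ (no log-smoothness required), notes that $\kappa(X,D)=\dim X$ forces $Z(\hat f^*\omega)\neq\emptyset$ for \emph{every} $\omega$ by Theorem~\ref{T:over P^{r,d}}, and then uses Lemma~\ref{L:g} to conclude that for \emph{generic} $\omega$ the form $\hat f^*\omega$ has a simple log pole, hence no zero, along all of $\hat D$; so its zero locus must meet $\hat X\setminus\hat D=U$ up to the modification. There the smoothness hypothesis on $f|_U$ says the translation-invariant, nowhere-vanishing forms on $T^{r,d}$ pull back without zeros --- a contradiction. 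To repair your write-up you should replace the log-smooth extension step by this generic-$\omega$ argument (Lemma~\ref{L:g} together with the codimension-zero case of Theorem~\ref{T:over P^{r,d}}), which uses smoothness only over the interior, where you actually have it.
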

\begin{proof}
We can find a log-resolution $\left(\hat{X},\hat{D}\right)\to \left(X,D\right)$ such that it induces a morphism of log-pairs 
$$\hat{f}:\left(\hat{X},\hat{D}\right)\to \left(P^{r,d},L\right),$$
with $\hat{f}^{-1}L= \hat{D}$.
Now by Theorem \ref{T:over P^{r,d}}, we have $Z\left(\hat{f}^*\omega\right)\neq \emptyset$, for any $\omega\in H^0\left(\Omega^1_{P^{r,d}}\left(\log L\right)\right)$. However, by Lemma \ref{L:g}, (see also Step 0 of the proof of Theorem \ref{T:over quasi-abelian variety} in Section 4,) for general such $\omega$, $Z\left(\hat{f}^*\omega\right)\cap (\hat{X}\setminus \hat{D})\neq \emptyset.$ We refer \cite{W16} for more details, and a different proof based on the structure of Higgs bundles.
\end{proof}
We use the following lemma in the proof above, \cite[Lemma 8]{W16}. 
\begin{lemma}\label{L:g}
Given a morphism $f: X\to P^{r,d}$, where $X$ is smooth and quasi-projective and $P^{r,d}$ is the $\mathbb{P}^r$-bundle compactification of a quasi-abelian variety $T^{r,d}$, with $L$ being the boundary divisor on $P^{r,d}$. Let $D=\left(f^*L\right)_{red}$ and assume that $\left(X,D\right)$ is log-smooth. Then for general $\theta \in H^0\left(\Omega^1_{P^{r,d}}\left(\log   L\right)\right)$, $f^* \theta\in H^0 \left(\Omega^1_X\left(\log  D\right)\right)$ has a simple log pole on every point of $D$. In particular, it does not vanish at any point of $D$.
\end{lemma}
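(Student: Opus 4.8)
The plan is to reduce the statement to a residue computation along each irreducible component of $D$, exploiting the explicit description of the space of log-one-forms on $P^{r,d}$. Recall from the discussion preceding Remark \ref{R:toric compactification} that $\{dx_1/x_1,\dots,dx_r/x_r\}$ descends to a basis of $H^0(\Omega^1_{P^{r,d}}(\log L))/p^*H^0(A^d,\Omega^1_{A^d})$, so every global log-one-form can be written as
$$\theta=\sum_{i=1}^r c_i\,\frac{dx_i}{x_i}+p^*\alpha,\qquad c_i\in\mathbb{C},\ \alpha\in H^0(A^d,\Omega^1_{A^d}).$$
The key structural input is that $p^*\alpha$ is a genuinely holomorphic form on $P^{r,d}$, having no pole along $L$ since $\alpha$ lives on the compact abelian variety $A^d$; hence $f^*p^*\alpha$ is holomorphic on $X$ and contributes nothing to the poles of $f^*\theta$ along $D$.

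First I would fix an irreducible component $D_j$ of $D$ and work near its generic point. Since $D=(f^*L)_{\mathrm{red}}$, the rational functions $f^*x_i$ have their zeros and poles supported on $D$, so I can write $f^*x_i=u_i\,t^{m_{ij}}$ with $t$ a local equation for $D_j$, $u_i$ a local unit, and $m_{ij}\in\mathbb{Z}$ the order of $f^*x_i$ along $D_j$. Then
$$f^*\!\left(\frac{dx_i}{x_i}\right)=m_{ij}\,\frac{dt}{t}+\frac{du_i}{u_i},$$
and $du_i/u_i$ is holomorphic near $D_j$. Combining this with the holomorphy of $f^*p^*\alpha$, the residue of $f^*\theta$ along $D_j$ is
$$\operatorname{Res}_{D_j}\left(f^*\theta\right)=\sum_{i=1}^r c_i\,m_{ij}.$$
Because $m_{ij}$ is the \emph{generic} order of vanishing, this residue is a \emph{constant} function on $D_j$, which is precisely the feature that lets me control every point of $D_j$ at once rather than only its generic point.

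Next I would observe that the vector $(m_{1j},\dots,m_{rj})$ is nonzero: the component $D_j$ maps into $L$, so its image is contained in some boundary divisor $\{x_i=0\}$ or $\{x_i=\infty\}$, forcing $m_{ij}\neq 0$ for that index. Consequently $(c_i)\mapsto\sum_i c_i m_{ij}$ is a nontrivial linear form, and its vanishing cuts out a proper linear subspace of the space of $\theta$'s. As $D$ has only finitely many components, for $\theta$ outside the finite union of these hyperplanes every residue $\operatorname{Res}_{D_j}(f^*\theta)$ is a nonzero constant. A nonzero constant residue along $D_j$ means $f^*\theta$ has a simple log pole there; moreover, in a local log-frame $dt/t,\dots$ adapted to $D$ near any point $p\in D_j$ the coefficient of $dt/t$ equals this nonzero constant, so $f^*\theta$ cannot vanish at $p$. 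Ranging over all components yields nonvanishing on the whole of $D$ together with a simple log pole along each component.

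The main thing to get right is the claim that the order vector is nonzero along every component --- equivalently that no component of $D$ escapes the boundary degeneration, so that at least one torus coordinate vanishes or blows up there --- together with the verification that the residue is a genuine constant rather than merely generically nonzero. Both points rest only on the identity $D=(f^*L)_{\mathrm{red}}$ and on the translation-invariant torus structure of the forms $dx_i/x_i$; everything else is a finite-dimensional genericity argument over the finitely many components of $D$. I therefore expect the lemma to follow essentially from this local residue calculation, with no deeper input required.
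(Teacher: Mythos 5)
Your residue argument is correct in outline, and it is essentially the only natural proof of this statement; note that the paper itself does not prove Lemma \ref{L:g} but simply cites it as Lemma 8 of \cite{W16}, so there is no internal proof to compare against. Two of your steps deserve comment. First, the reduction to residues is sound: writing $\theta=\sum_i c_i\,dx_i/x_i+p^*\alpha$ in a local trivialization of the $\mathbb{G}_m^r$-bundle, the divisor of $f^*x_i$ is supported on $D=\mathrm{Supp}(f^*L)$, so near \emph{any} point of $D$ one has $f^*x_i=(\text{unit})\cdot\prod_j t_j^{m_{ij}}$ with $t_j$ local equations of the components of $D$; hence the coefficient of $dt_j/t_j$ in the log-frame is $\sum_i c_i m_{ij}$ plus a multiple of $t_j$, and the residue is genuinely the constant $\sum_i c_i m_{ij}$ on all of $D_j$, not merely at its generic point. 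This is exactly the point you correctly identified as needing care. Second, your justification that the order vector $(m_{1j},\dots,m_{rj})$ is nonzero is slightly too quick: if $f(D_j)$ lands in a deeper stratum of $L$, say in both $\{x_i=0\}$ and the hyperplane at infinity of the $\mathbb{P}^r$-fiber, then $m_{ij}=\mathrm{ord}_{D_j}(f^*x_i)$ is a difference of two positive multiplicities and can vanish for that particular index $i$. The vector as a whole still cannot vanish: writing $m_{ij}=\langle e_i, w_j\rangle$ with $w_j=\sum_k \mathrm{ord}_{D_j}(f^*L_k)\,v_k$ a nonnegative, not-all-zero combination of the ray generators of the fan of $\mathbb{P}^r$, one has $w_j=0$ only if every coefficient is positive, which would force $f(D_j)\subset\bigcap_k L_k=\emptyset$. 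With that two-line toric correction your genericity argument over the finitely many components of $D$ closes the proof.
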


Another application of Theorem \ref{T:log pair} is that we can give an answer to the algebraic hyperbolicity part of Shafarevich's conjecture, with general fibers being of log-general type, which is the special case of the following theorem by taking $T^{r,d}=\mathbb{G}^1_m$ or $A^d$. We refer to \cite[\S 16]{HK10} for details of this topic. Note that a projective morphism of log-smooth pairs $f:(U,D^U)\to \mathbb{G}^1_m$ being log-smooth is equivalent to the existence of $(X,D^X)$, a projective log-smooth compactification of $(U,D^U)$ such that $g:(X,D^X)\to (\mathbb{P}^1,L)$, the induced morphism of log-smooth pairs by $f$, is log-smooth. It can be very complicated in the higher dimensional cases. A similar question has been asked and we refer to \cite{AKMW} for the details on this topic.

\begin{definition}
Given a flat dominant projective morphism of log-smooth pairs $f:(X,D^X)\to (Y,D^Y)$, with connected fibers and the generic fiber $(X_\eta, D^X_\eta)$ being Kawamata-log-terminal (klt) and of log-general type. We say that it is \emph{birationally isotrivial} if the two log-fibers $(X_a, D^X_a)$ and $(X_b, D^X_b)$ have the same log-canonical model (\cite{BCHM}), for any two general closed points $a$ and $b$ on $Y$.
\end{definition}

\begin{coro}\label{C:fiber of lg}
Given a log-smooth surjective morphism of log-pairs $f:\left(X, D\right)\to \left(P^{r,d},L\right)$, with $\left(X, D_{red}\right)$ being log-smooth and the generic fiber $(X_\eta, D_\eta)$ being klt and of log-general type, then $f$ is birationally isotrivial.
\end{coro}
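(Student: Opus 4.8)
The plan is to deduce birational isotriviality (Corollary \ref{C:fiber of lg}) from the dimension bound in Theorem \ref{T:log pair}, by reducing everything to a relative quasi-albanese construction. The guiding idea is that birational isotriviality of a family of log-general-type klt pairs over a base is equivalent to the statement that the moduli map to an appropriate parameter space of log-canonical models is \emph{constant} on the base $P^{r,d}$. Since we are mapping from an affine-by-torus base $T^{r,d}$ (the interior of $P^{r,d}$), any non-constancy of the moduli would produce enough independent holomorphic log-one-forms on the total space with empty zero locus to violate Theorem \ref{T:log pair}. So the strategy is: (i) set up the variation-of-moduli morphism; (ii) translate ``non-isotrivial'' into ``the variation has positive-dimensional image''; (iii) use positivity of the relative log-canonical sheaf to manufacture log-one-forms with no zeros; (iv) count dimensions against $\dim X - \kappa(X,D)$ and derive a contradiction.

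First I would make precise the moduli-theoretic reformulation. Because the generic fiber is klt and of log-general type, after possibly passing to a finite cover of the base and running the relative log-MMP (\cite{BCHM}), the family admits a relative log-canonical model $f^c:(X^c,D^c)\to (P^{r,d},L)$, whose fibers are the log-canonical models $(X_a,D^c_a)$. Birational isotriviality is then exactly the assertion that this relative canonical model is, after shrinking the base, a product family $(F,D_F)\times T^{r,d}$ up to the $T^{r,d}$-action, i.e. that the induced classifying map from $T^{r,d}$ to the (quasi-projective, by \cite{BCHM}-type results) moduli space $M$ of log-canonical models is constant. I would record this as a lemma: $f$ is birationally isotrivial over $T^{r,d}$ if and only if the moduli map $\varphi:T^{r,d}\to M$ is constant.

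Next I would run the contrapositive. Suppose $f$ is \emph{not} birationally isotrivial, so $\varphi$ is non-constant; let $V\subset T^{r,d}$ be the positive-dimensional image of $\varphi$ (or rather, let the variation have positive rank). The key geometric input is positivity: by Kawamata--Viehweg--Kollár-type positivity for families of log-general-type pairs, a positive power of the relative log-canonical sheaf $\omega_{X/P^{r,d}}(D-f^*L)$ is \emph{big} along the direction of variation, equivalently $H^0\big(X,(\omega_X(D))^{\otimes k}\otimes f^*(p^*\mathcal{A}^{-1}\otimes\mathcal{O}_{P^{r,d}}(-L))\big)\neq 0$ for suitable $k$ and ample $\mathcal{A}$ — which is precisely the hypothesis of Theorem \ref{T:over quasi-abelian variety}. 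Under that hypothesis, every pullback $Z(f^*\omega)$ is non-empty for all $\omega\in H^0(\Omega^1_{P^{r,d}}(\log L))$. On the other hand, the $T^{r,d}$-action furnishes, via the quasi-albanese/pullback of invariant log-one-forms (cf. Lemma \ref{L:g}), a full $m=r+d$ dimensional space of log-one-forms $f^*\omega$ that are \emph{nowhere vanishing} on the fibers where $f$ is log-smooth — contradicting Theorem \ref{T:log pair}'s bound $\dim W\le \dim X-\kappa(X,D)$ once $\kappa$ is forced up by the variation. So non-isotriviality is incompatible with the log-smoothness hypothesis on $f$.

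\emph{The main obstacle} I expect is step (iii): correctly packaging the positivity of the relative log-canonical bundle so that it yields the exact non-vanishing section required by the hypothesis of Theorem \ref{T:over quasi-abelian variety}, rather than mere bigness over an open subset. Concretely, one must show that \emph{non-constant} variation of the log-canonical model over the quasi-abelian $T^{r,d}$ forces $\kappa(X,D)$ to strictly exceed $\dim X - m$, so that the $m$-dimensional space of $T^{r,d}$-invariant log-one-forms overshoots the allowed $\dim X-\kappa(X,D)$. This requires a Viehweg-style weak-positivity argument adapted to the logarithmic and klt setting, and care that the $T^{r,d}$-action (in particular the torus directions contributing boundary poles) interacts correctly with the boundary $L$; the remaining bookkeeping with toroidal resolutions, to keep zero-loci of log-one-forms under control, is routine given Lemma \ref{L:toroidal extension} and the reduction to $r=0,1$ already established.
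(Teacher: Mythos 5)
Your overall strategy is the same squeeze the paper uses: the log-smoothness of $f$ over the quasi-abelian base gives an $m=r+d$ dimensional space of nowhere-vanishing log-one-forms, so Theorem \ref{T:log pair} (via Corollary \ref{C:log smooth}) forces $\kappa(X,D)\le \dim X-m=\dim X_\eta$, and one must play this against a lower bound on $\kappa(X,D)$ coming from the variation of the fibers. You also correctly reformulate birational isotriviality as vanishing of the variation of the family of log-canonical models, which is exactly how the paper proceeds ($\mathrm{Var}(f)=0$ in the sense of \cite[Definition 9.3]{KP16}).

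The genuine gap is the step you yourself flag as ``the main obstacle'': you never establish that non-constant variation actually pushes $\kappa(X,D)$ above $\dim X_\eta$. The precise statement needed is the logarithmic Kawamata--Viehweg subadditivity with variation, $\kappa(X,D)\ge \kappa(X_\eta,D_\eta)+\mathrm{Var}(f)$ for klt log-general-type generic fibers, which the paper isolates as Theorem \ref{T:subadditivity} and imports from \cite[Theorems 9.5, 9.6]{KP16}. Combined with $\kappa(X_\eta,D_\eta)=\dim X_\eta$ and the upper bound above, this gives $\mathrm{Var}(f)\le 0$ in one line. Your proposal instead gestures at ``a Viehweg-style weak-positivity argument adapted to the logarithmic and klt setting'' without supplying or citing it, and the surrounding machinery you introduce to compensate --- verifying the hypothesis $H^0\bigl(X,(\omega_X(D))^{\otimes k}\otimes f^*(p^*\mathcal{A}^{-1}\otimes\mathcal{O}_{P^{r,d}}(-L))\bigr)\neq 0$ of Theorem \ref{T:over quasi-abelian variety} directly from bigness of the relative log-canonical sheaf --- is both unnecessary and not actually justified: bigness of $\omega_{X/P^{r,d}}(D)$ ``along the direction of variation'' does not by itself produce a section of $\omega_X(D)^{\otimes k}$ dominating the pullback of an ample class from the base. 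Once you quote the subadditivity theorem, none of the relative-canonical-model or moduli-map setup is needed; without it, the decisive inequality is missing.
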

\begin{proof}
We only need to show $\text{Var}(f)=0$, in the sense of \cite[Definition 9.3]{KP16}. 
By Corollary \ref{C:log smooth}, we have  
$$\kappa\left(X,D\right)\leq \dim X_\eta.$$
By Theorem \ref{T:subadditivity}, we have
$$\kappa\left(X,D\right)\geq \kappa\left(X_\eta, D_\eta\right)+\text{Var}(f).$$
Since $\dim X_\eta=\kappa\left(X_\eta, D_\eta\right)$ by the assumption of the theorem, we get that $\text{Var} (f)=0.$
\end{proof}
\begin{remark}
If we only assume that the generic fiber $(X_\eta, D_\eta)$ is log-canonical and of log-general type, since $\left(X_\eta, D_\eta\right)$ being of log-general type is an open condition on the coefficients of the boundary divisor, we can consider
$$f':\left(X, \left(1-\epsilon\right) D\right)\to  \left(P^{r,d},L\right),$$
for $0<\epsilon \ll 1$. Then the generic fiber of $f'$ is klt and of log-general type. Following the same arguement above, we can show that $f'$ is birationally isotrivial.
\end{remark}
In the proof of the previous theorem, we used the following theorem, which can be easily deduced from  \cite[Theorem 9.5, Theorem 9.6]{KP16}.
\begin{theorem}\label{T:subadditivity}
Let $ f: \left(X,D\right) \rightarrow \left(Y,E\right) $ be a surjective morphism of projective log canonical pairs with both $\left(X,D_{\text{red}}\right)$ and $\left(Y,E\right)$ are log-smooth. Assume that $\lfloor D\rfloor$ contains $f^{-1}E_{\text{red}}$ and the generic fiber $\left(X_{\eta},D_{\eta}\right)$ is of log-general type. Then 
  \begin{equation*}
    \kappa\left(X,D\right)\geq\kappa\left(X_{\eta},D_{\eta}\right) +\kappa\left(Y, E\right).
  \end{equation*}
    Further, if $\left(X_\eta, D_\eta\right)$ is klt, then
  \begin{equation*}
   \kappa\left(X,D\right)\geq \kappa \left(X_{\eta},D_{\eta}\right)+ max \{\kappa\left(Y, E\right), \text{Var}(f)\}.
  \end{equation*}
\end{theorem}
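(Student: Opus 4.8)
The plan is to deduce both inequalities from the subadditivity and variation results of Kovács--Patakfalvi, \cite[Theorem 9.5]{KP16} and \cite[Theorem 9.6]{KP16}, which are formulated for a fibration over a base \emph{without} a boundary divisor; thus the genuine content of the argument is to transport the base pair $\left(Y,E\right)$ into that framework and to verify that the hypotheses on $f:\left(X,D\right)\to\left(Y,E\right)$ match theirs. The first inequality, which only assumes that the generic fiber is of log-general type, will follow from the log-canonical version \cite[Theorem 9.5]{KP16}, whereas the refined inequality involving the $\max$ uses the klt version \cite[Theorem 9.6]{KP16}, in which the variation term $\text{Var}(f)$ enters through the quasi-projectivity and ampleness results for the moduli of stable log-pairs.

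The computational heart of the reduction is the decomposition
$$K_X+D=\bigl(K_{X/Y}+D-f^*E\bigr)+f^*\left(K_Y+E\right).$$
First I would check that the relative part restricts on a very general fiber to $K_{X_\eta}+D_\eta$, since both $f^*E$ and $f^*K_Y$ are trivial there; hence the relative divisor is \emph{big} on fibers precisely because $\left(X_\eta,D_\eta\right)$ is of log-general type, which is exactly the input required to run the weak-positivity machinery of \cite{KP16}. The role of the hypothesis $f^{-1}E_{\text{red}}\subseteq\lfloor D\rfloor$ is to guarantee the compatibility of the two boundary structures: the components of $D$ lying over $E$ already occur with coefficient $1$, so that subtracting $f^{-1}E_{\text{red}}$ leaves $\left(X,D-f^{-1}E_{\text{red}}\right)$ an effective log-canonical sub-boundary, and the pulled-back sections of $m\left(K_Y+E\right)$ can be absorbed into $H^0\left(X,m\left(K_X+D\right)\right)$ without destroying log-canonicity of the total pair.

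With this setup the conclusion follows by invoking \cite{KP16}: the weak positivity (respectively the bigness-up-to-variation) of the direct image sheaves $f_*\mathcal{O}_X\bigl(m\left(K_{X/Y}+D-f^*E\right)\bigr)$, combined with the $\kappa\left(Y,E\right)$ worth of positivity furnished by $f^*\left(K_Y+E\right)$ and, in the klt case, the variation, yields $\kappa\left(X,D\right)\geq\kappa\left(X_\eta,D_\eta\right)+\kappa\left(Y,E\right)$ together with the $\max$-refinement, via the standard easy-addition argument for the twisted relative pluri-log-canonical system. The main obstacle I anticipate is the bookkeeping of the base boundary: $f^*E$ may carry multiplicities $m_i\geq 2$ along components $F_i\subseteq f^{-1}E_{\text{red}}$ that appear in $D$ only with coefficient $1$, so $D-f^*E$ need not be effective and one cannot naively form the pair $\left(X,D-f^*E\right)$. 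Resolving this discrepancy---by working with $f^{-1}E_{\text{red}}$ in place of $f^*E$ and comparing Iitaka dimensions directly, or by a semistable base-change reduction rendering the fibers over $E$ reduced so that the pullback of $m\left(K_Y+E\right)$ genuinely injects into $\lvert m\left(K_X+D\right)\rvert$---is the step that requires care, after which both inequalities drop out of \cite[Theorems 9.5 and 9.6]{KP16}.
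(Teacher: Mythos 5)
Your proposal follows exactly the route the paper takes: the paper offers no argument for this theorem beyond the remark that it ``can be easily deduced from [KP16, Theorem 9.5, Theorem 9.6],'' which is precisely the deduction you outline. Your discussion of the multiplicity discrepancy between $f^*E$ and $f^{-1}E_{\text{red}}$ is in fact more detail than the paper supplies; the difficulty you flag is genuine but is exactly what the hypothesis $f^{-1}E_{\text{red}}\subseteq\lfloor D\rfloor$ in the cited results of Kov\'acs--Patakfalvi is designed to absorb.
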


\section{A Simplified proof of the non-log version of Theorem \ref{T:over quasi-abelian variety}}
In this section, we show a simplified proof of the the following theorem, which is the main result of \cite{PS14}, without using the generic vanishing of Hodge modules on abelian varieties, which they introduced in \cite{PS13}. The method is still mainly based on \cite{PS13}.
\begin{theorem}\label{T:over abelian variety}
Given a morphism $f:X\to A$, if there is a positive integer $k$ and an ample line bundle $\mathcal{A}$ on $A$, such that 
$$H^0\left(X, \omega_X^{\otimes k}\otimes f^*\mathcal{A}^{-1}\right)\neq 0,$$
then $Z\left(f^*\omega\right)\neq \emptyset$, for any $\omega\in H^0\left(A, \Omega^1_A\right).$
\end{theorem}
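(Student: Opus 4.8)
The plan is to argue by contradiction. Suppose $f^{*}\omega$ is a nowhere-vanishing section of $\Omega^{1}_{X}$; I would then derive that $H^{0}\bigl(X,\omega_{X}^{\otimes k}\otimes f^{*}\mathcal{A}^{-1}\bigr)=0$ for every $k\geq 1$ and every ample $\mathcal{A}$ on $A$, contradicting the hypothesis. The starting observation is the elementary fact that wedging with a nowhere-zero covector is exact: since $f^{*}\omega$ vanishes nowhere, the Koszul complex
$$0\to\mathcal{O}_{X}\xrightarrow{\ \wedge f^{*}\omega\ }\Omega^{1}_{X}\xrightarrow{\ \wedge f^{*}\omega\ }\cdots\xrightarrow{\ \wedge f^{*}\omega\ }\Omega^{n}_{X}\to 0,\qquad n=\dim X,$$
is an exact complex of locally free sheaves. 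This is the only structural input the nowhere-vanishing hypothesis provides, and the rest of the argument aims to contradict it using the positivity encoded in $\omega_{X}^{\otimes k}\otimes f^{*}\mathcal{A}^{-1}$.

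Next I would promote the Koszul complex to the family of twisted de Rham complexes $(\Omega^{\bullet}_{X},\nabla_{t})$ with differential $\nabla_{t}=d+t\,f^{*}\omega$, $t\in\mathbb{C}$; these are genuine complexes because $\omega$, being translation-invariant on $A$, is closed. Rescaling and letting $t\to\infty$ degenerates $\nabla_{t}$ to its symbol $\wedge f^{*}\omega$, so the exactness from the first step together with upper-semicontinuity of cohomology in the family forces $\mathbb{H}^{\ast}\bigl(X,(\Omega^{\bullet}_{X},\nabla_{t})\bigr)=0$ for general $t$. In the language of the paper this says that the de Rham complex of the pushforward Hodge module, twisted along the cotangent direction determined by $\omega$, becomes acyclic. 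This is where the method of \cite{PS13} enters, but only along the single direction cut out by $\omega$ rather than over the whole dual abelian variety, which is precisely what lets us avoid the generic vanishing theorem itself.

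Then I would feed in the hypothesis through a well-chosen mixed Hodge module. Given $0\neq s\in H^{0}\bigl(X,\omega_{X}^{\otimes k}\otimes f^{*}\mathcal{A}^{-1}\bigr)$, the hypersurface $\mathrm{div}(s)$ determines a $k$-fold cyclic cover, hence a mixed Hodge module $M$ on $X$ one of whose graded pieces $\mathrm{gr}^{F}\mathrm{DR}(M)$ sees the line bundle $\omega_{X}^{\otimes k}\otimes f^{*}\mathcal{A}^{-1}$. Applying Saito's vanishing theorem to the pushforward of $M$, together with the ampleness of $\mathcal{A}$, should produce a nonzero class that must survive in the twisted hypercohomology of the previous paragraph. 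This contradicts the acyclicity forced by the nowhere-vanishing of $f^{*}\omega$, and completes the proof.

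The main obstacle is this last step. When $k=1$ the section already lives in $H^{0}(X,\Omega^{n}_{X}\otimes f^{*}\mathcal{A}^{-1})$ and contributes directly to the top term $\Omega^{n}_{X}$ of the Koszul complex, so the contradiction is almost immediate; for general $k$ one must instead locate a Hodge module in which $\omega_{X}^{\otimes k}$ appears as a Hodge-graded piece and for which Saito vanishing is available. Selecting this Hodge module well is exactly the simplification over \cite{PS14}, and the delicate point is to ensure that the class it produces is genuinely paired with the direction $\omega$, so that it actually obstructs the acyclicity rather than lying in the kernel for trivial reasons.
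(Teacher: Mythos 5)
Your proposal assembles the right ingredients --- the Koszul complex of a nowhere-vanishing form, the cyclic cover attached to a section of $\mathcal{B}^{\otimes k}$ with $\mathcal{B}=\omega_X\otimes f^*\mathcal{A}^{-1}$, and a Saito-type vanishing --- but the step you yourself flag as ``the main obstacle'' is exactly where the proof lives, and the route you sketch for it does not work. You fix one form $\omega$, pass to the twisted de Rham complexes $(\Omega_X^\bullet, d+t\,f^*\omega)$, and then want Saito vanishing to ``produce a nonzero class that must survive'' in that single direction. The difficulty is that the only natural class available --- the section $s$, or after the cyclic cover the function $1\in H^0(Y,\mathscr{O}_Y)$ --- sits in $H^0$ of the top term of the Koszul complex, and its image under the edge map to $\mathbb{H}^0\bigl(X,\mathcal{B}^{-1}\otimes(\Omega_X^\bullet,\wedge f^*\omega)\bigr)$ can perfectly well vanish for a \emph{fixed} $\omega$: nothing prevents it from lying in the image of $\wedge f^*\omega$. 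This is already an issue for $k=1$, so the claim that that case is ``almost immediate'' is not correct, and no vanishing theorem applied in one cotangent direction forces this particular class to survive. The detour through $d+t\,f^*\omega$ also buys nothing: semicontinuity only transfers vanishing \emph{from} the Koszul fiber \emph{to} the de Rham fibers, which is the opposite of what you need.

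The paper resolves precisely this point by working over the whole space $V=H^0(A,\Omega^1_A)$ at the graded (Koszul) level rather than in a single direction. It forms the sheaf $\mathcal{F}$ on $A\times V$ as the image of $R^0\hat{g}_*\bigl(\hat{\mathcal{L}}^{-1}\otimes\hat{\psi}^*(p_X^*\mathcal{B}^{-1}\otimes C_{X})\bigr)\to R^0\hat{g}_*\bigl(\hat{\mathcal{L}}^{-1}\otimes C_{Y}\bigr)$; the extreme graded piece of $p_{A*}\mathcal{F}$ is computed explicitly to be $g_*\mathscr{O}_Y$, whose $H^0$ is nonzero, so $\mathcal{F}\neq 0$. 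The key input is then Claim~\ref{C:claim (a)}: $p_{V*}R^0\hat{g}_*(\hat{\mathcal{L}}^{-1}\otimes C_{Y})$ is torsion-free on $V$, proved not by generic vanishing but by Saito vanishing (showing $\mathcal{E}=\mathbf{R}p_{V*}R^0\hat{g}_*(\hat{\mathcal{L}}\otimes C_{Y})$ is a sheaf) combined with Grothendieck duality and the self-duality of $\mathcal{H}^0g_+\widetilde{\omega}_Y$, which identify the sheaf in question with $\mathbf{R}^0\mathcal{H}om(\mathcal{E},\mathscr{O}_V)$ up to the involution $-1_V$. Torsion-freeness is what converts ``$\mathcal{F}\neq 0$'' into ``$p_V(\mathrm{Supp}\,\mathcal{F})=V$,'' i.e.\ non-exactness of the Koszul complex of $f^*\theta$ for every $\theta\in V$; this propagation from the single nonvanishing $H^0(g_*\mathscr{O}_Y)$ to all directions is the mechanism your argument is missing. (A smaller omission: before taking the cyclic cover you need Step~1 of the paper, base change along the isogeny $[k]\colon A\to A$, to replace $\omega_X^{\otimes k}\otimes f^*\mathcal{A}^{-1}$ by the $k$-th power of the single line bundle $\mathcal{B}$.)
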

\begin{proof}
\emph{Step 1.}
We have an étale morphism $[k]: A\to A$ which is defined by multiplying by $k$. Apply the finite base change and let $f':X'\to A$ be the fiber product. Since $[k]$ is étale, we have that $X'$ is smooth and it suffices to prove the theorem for $f': X'\to A$.

\begin{center}
\begin{tikzcd}
X'\arrow{r}{\phi}\arrow{d}{f'}
&X\arrow{d}{f}
\\
A\arrow{r}{[k]}
&A
\end{tikzcd}
\end{center}

We have an inclusion $\mathcal{A}^{\otimes k}\to [k]^*\mathcal{A}$ \cite[II, 6]{Mu08}. Hence

\begin{align*}
&\phi^*\left(\omega_X^{\otimes k}\otimes f^*\left(\mathcal{A}^{-1}\right)\right)\\
=& \phi^*\omega_X^{\otimes k}\otimes f'^*[k]^* \mathcal{A}^{-1}\\
\subset &\left(\omega_{X'}\otimes f'^*\mathcal{A}^{-1}\right)^{\otimes k}
\end{align*}
which, by assumption, has a global section. 

Hence we reduce to the case that there exists a positive integer $k$ such that 
$$H^0\left(X, \left(\omega_{X}\otimes f^*\mathcal{A}^{-1}\right)^{\otimes k}\right)\neq 0.$$

\emph{Step 2.}
Let $\mathcal{B}=\omega_{X}\otimes f^*\mathcal{A}^{-1}$, a line bundle over $X$. Do the cyclic cover induced by a section $s$ of $\mathcal{B}^{\otimes k}$ and resolve it. We get $\psi: Y\to X.$ Hence we have the following commutative diagram
\begin{center}
\begin{tikzcd}
Y \arrow{r}{\psi}\arrow{rd}{g}
&X \arrow{d}{f}
\\

&A.
\end{tikzcd}
\end{center}

By the construction, there is a tautological section of $\psi^* \mathcal{B}$ on $Y$. Hence we have a natural injection, 
$$\psi^*\mathcal{B}^{-1}\to\mathscr{O}_Y,$$
which induces the following injection
\begin{equation}\label{e:inj(a)}
\psi^*\left(\mathcal{B}^{-1}\otimes \Omega_X^k \right) \to \Omega_Y^k.
\end{equation}
Actually, both of them are isomorpisms over the complement of the zero-locus of $s$.

\emph{Step 3.}
Let $d=\dim A.$
We have that $\Omega_{A}^1$ is a trivial $d$-dimensional vector bundle over $A$. Denote $V= H^0\left(A,\Omega_{A}^1\right),$ and denote the space of cotangent of $A$ by $T^*_A$. Hence we have
$$T^*_A= A\times V.$$
Consider the following commutative diagram, which contains all the morphisms we will need in the proof:
\begin{center}
\begin{tikzcd}
Y\arrow[leftarrow]{r}{p_Y}\arrow[bend right]{dd}[swap]{g}\arrow{d}{\psi}&
Y\times V\arrow[bend left=45]{dd}{\hat{g}}\arrow{d}{\hat{\psi}}
\\
X\arrow[leftarrow]{r}{p_{X}}\arrow{d}{f}&
X\times V\arrow{d}{\hat{f}}

\\
A\arrow[leftarrow]{r}{p_{A}}&
A\times V\arrow{r}{p_V}&
V,
\end{tikzcd}
\end{center}
where all the morphisms are natural projections or naturally induced by $f$ and $\psi$.

Let $n=\dim X=\dim Y$. Denote
  \begin{equation*} 
  C_{Y}=[p_Y^*\mathscr{O}_Y \to p_Y^*\Omega_Y^1 \to ... \to p_Y^*\Omega_Y^n],
  \end{equation*}
placed in cohomological degrees $-n, -n+1, ... , 0$, which is the Koszul complex given by the pullback of the tautological section of $p_{A}^*\Omega_{A}^1$. Note that $p_{Y}$ is an affine morphism, and $p_{Y*}\left(C_{Y}\right)$ is the following graded complex
$$C_{Y, \bullet}=[\mathscr{O}_Y\otimes S_{\bullet-d} \to \Omega_Y^1\otimes S_{\bullet-d+1} \to 
... \to \Omega_Y^n\otimes S_{\bullet-d+n}],
$$
where $S_\bullet:= \text{Sym} V^*$. The differential in the complex is induced by the evaluation morphism $V\otimes \mathscr{O}_Y \to \Omega_Y^1.$
We define $C_{X}$, $C_{X,\bullet}$ in a similar way.
Denote $\mathcal{L}=g^*\mathcal{A}$ and $\hat{\mathcal{L}}= p_Y^*\left(\mathcal{L}\right).$

\begin{claim}\label{C:claim (a)}
$$p_{V*}R^0\hat{g}_*\left(\hat{\mathcal{L}}^{-1}\otimes C_{Y}\right)$$
is torsion free.
\end{claim}

\emph{Step 4.}
We continue the proof assuming the claim.

We set $\mathcal{F}$ as the image of the map 
$$R^0\hat{g}_*\left(\hat{\mathcal{L}}^{-1}\otimes\hat{\psi}^*\left(p_X^*\mathcal{B}^{-1}\otimes C_{X}\right)\right)\to R^0\hat{g}_*\left(\hat{\mathcal{L}}^{-1}\otimes C_{Y}\right)$$
induced by the injection (\ref{e:inj(a)}).

Pick any global one-form 
$$\theta\in H^0\left(A, \Omega^1_{A}\right)=V.$$
The restriction of $p_X^*\mathcal{B}^{-1}\otimes C_{X}$ on the fiber of $p_V\circ \hat{f}: X\times V\to V$ over $\theta$ is just
$$\mathcal{B}^{-1}\to\mathcal{B}^{-1}\otimes\Omega^1_X\to...\to\mathcal{B}^{-1}\otimes\Omega^n_X,$$
the Koszul complex defined by $f^*\left(\theta\right)$ and twisted by $\mathcal{B}^{-1}$. Since the zero-locus of $f^*\left(\theta\right)$ is empty if and only if the above complex is exact, to prove the theorem, it suffices to prove $p_V\left(\text{supp}\mathcal{F}\right)=V$.

Note that $p_{A}$ and $p_Y$ are affine, so we have that $p_{A*}$ and $p_{Y*}$ are exact, and $p_{A*}\circ R^0\hat{g}_*= R^0 g_*\circ p_{Y*}.$ Hence, $p_{A*}\mathcal{F}$ is a graded $p_{A*}\mathscr{O}_{A\times V}$-module $\mathcal{F}_\bullet$ given by the image of
$$p_{A*}R^0\hat{g}_*\left(\hat{\mathcal{L}}^{-1}\otimes\hat{\psi}^*\left(p_X^*\mathcal{B}^{-1}\otimes C_{X}\right)\right)\to p_{A*}R^0\hat{g}_*\left(\hat{\mathcal{L}}^{-1}\otimes C_{Y}\right),$$
which is the same as the image of
$$
R^0g_*\left(\mathcal{L}^{-1} \otimes\psi^*\left(\mathcal{B}^{-1}\otimes C_{X,\bullet}\right)\right)
\to R^0g_*\left(\mathcal{L}^{-1} \otimes C_{Y,\bullet}\right).
$$

If $p_V\left(\text{Supp}\mathcal{F}\right)\neq V$, the subsheaf 
  \begin{equation*}
  p_{V*}\mathcal{F}\subset p_{V*} R^0\hat{g}_*\left(\hat{\mathcal{L}}^{-1}\otimes C_{Y}\right)
  \end{equation*}
would then be torsion and hence zero. Therefore, since $V$ is a vector space, $H^0\left(A, p_{A*}\mathcal{F}\right)=H^0\left(A,\mathcal{F}_\bullet\right)=0$. Recall that $\mathcal{B}=\omega_{X}\otimes f^* \mathcal{A}^{-1}$ and $\mathcal{L}=g^*\mathcal{A}.$ We have 
\begin{align*}
\mathcal{F}_{-n+r}= g_*\left(\mathcal{L}^{-1} \otimes \psi^*\left(\mathcal{B}^{-1}\otimes \omega_X\right)\right)= g_* \mathscr{O}_Y
\end{align*}
This forces $H^0\left(g_* \mathscr{O}_Y \right)=0,$
which is absurd.

\emph{Step 5.}
Finally, we show the proof of Claim \ref{C:claim (a)}.

Denote $\mathcal{E}=\mathbf{R}p_{V*}R^0\hat{g}_*\left(\hat{\mathcal{L}}\otimes C_{Y}\right).$ 
We first show that for $l>0$,
\begin{equation}\label{e:ve(a)}
\mathcal{H}^l \mathcal{E}=0.
\end{equation}
In particular, $\mathcal{E}$ is a sheaf.

Since both $p_A$ and $p_Y$ are affine, we have 
\begin{align*}
&H^l\left(A\times V, R^0\hat{g}_*\left(\hat{\mathcal{L}}\otimes C_{Y}\right)\right) \\
\simeq&H^l\left(A, p_{A*} R^0\hat{g}_*\left(\hat{\mathcal{L}}\otimes C_{Y}\right)\right)\\
\simeq&H^l\left(A,  R^0g_* p_{Y*}\left(\hat{\mathcal{L}}\otimes C_{Y}\right)\right) \\
=&H^l\left(A,  R^0g_* \left(C_{Y,\bullet}\right) \otimes \mathcal{A}\right)\\
=&0
\end{align*}
The last vanishing is due to Laumon's formula (\cite[Lemma 15.1]{PS14}, or Proposition \ref{P:gr direct}) and \cite[Lemma 2.5]{PS14} or Proposition \ref{P:v}. More precisely, we have 
$$R^0g_* \left(C_{Y,\bullet}\right)\simeq \text{Gr}^F_{\bullet}\mathcal{R}^0g_+\widetilde{\omega}_Y,$$
and 
$$H^l\left(\text{Gr}^F_{\bullet}\mathcal{R}^0g_+\widetilde{\omega}_Y\otimes \mathcal{A}\right)=0,$$
for $l>0$.

Hence, due to the degeneration of the Leray spectral sequence induced by $p_{V*}$, we have 
\begin{align*}
&H^0\left(V, \mathcal{H}^l \mathcal{E}\right)\\
=&H^0\left(V, R^l p_{V*} R^0\hat{g}_*\left(\hat{\mathcal{L}}\otimes C_{Y}\right)\right)\\
\simeq&H^l\left(P\times V, R^0\hat{g}_*\left(\hat{\mathcal{L}}\otimes C_{Y}\right)\right)\\
=&0,
\end{align*} 
and so that (\ref{e:ve(a)}) follows.

$p_{V*}R^0\hat{g}_*\left(\hat{\mathcal{L}}^{-1}\otimes C_{Y}\right)$ being torsion free is due to the following relation:
\begin{equation}\label{e:claim dual(a)}
p_{V*}R^0\hat{g}_*\left(\hat{\mathcal{L}}^{-1}\otimes C_{Y}\right)=(-1_V)^*\mathbf{R}^0\mathcal{H}om\left(\mathcal{E}, \mathscr{O}_V\right),
\end{equation}
where $\left(-1_V\right)$ is the involution on $V$ by multiplying $-1$. To show (\ref{e:claim dual(a)}), by Grothendieck duality, we have that 
\begin{align*}
&\mathbf{R}\mathcal{H}om\left(\mathcal{E},\mathscr{O}_V\right)\nonumber\\
=&\mathbb{D}_V\left(\mathbf{R}p_{V*} R^0\hat{g}_*\left(\hat{\mathcal{L}}\otimes C_{Y}\right)\right)[-d]\nonumber\\
\simeq&\mathbf{R}p_{V*} \mathbb{D}_{A\times V}\left( R^0\hat{g}_*\left(\hat{\mathcal{L}}\otimes C_{Y}\right)\right)[-d].\\
=&\mathbf{R}p_{V*} \mathbf{R}\mathcal{H}om\left( R^0\hat{g}_*\left(\hat{\mathcal{L}}\otimes C_{Y}\right), \mathscr{O}_{A\times V}[d]\right).
\end{align*}
By \cite[Proposition 2.11]{PS13}, (or Proposition \ref{P:direct} by taking $D^Y=0$ and $r=0$,) we have 
$$R^0\hat{g}_*\left(C_{Y}\right)\simeq \mathcal{G}\left(\mathcal{H}^0g_+\widetilde{\omega}_X\right),$$
the corresponding coherent sheaf on $T^*_A=A\times V$ of the mixed Hodge module $\mathcal{H}^0g_+\widetilde{\omega}_X$ on $A$. By the formula of taking duality in mixed Hodge modules \cite[Theorem 2.3]{PS13}, (or Proposition \ref{P:dual gr} by taking $H=0$,) and considering that the dual Hodge module of $\mathcal{R}^0g_+\widetilde{\omega}_X$ is itself up to a Tate twist, we obtain
$$\mathbf{R}\mathcal{H}om\left(\mathcal{E},\mathscr{O}_V\right)=\left(-1_V\right)^*\mathbf{R}p_{V*}R^0\hat{g}_*\left(\hat{\mathcal{L}}^{-1}\otimes C_{Y}\right),$$
which implies (\ref{e:claim dual(a)}).
\end{proof}

\section{Proof of Theorem \ref{T:over quasi-abelian variety}}
The proof is inspired by M. Popa and C. Schnell's work \cite{PS14}. 
The idea of the proof is similar to the proof of Theorem \ref{T:over abelian variety}.
\begin{proof}
\emph{Step 0.}
Let's consider the set 
$$S=\{\omega \in H^0\left(\Omega^1_{P^{r,d}}\left(\log   L\right)\right)\mid Z\left(f^*\omega\right)\neq \emptyset\}.$$
It is closed as a subset of $H^0\left(\Omega^1_{P^{r,d}}\left(\log   L\right)\right)$. Considering Lemma \ref{L:g}, it is easy to see that the two statements in the theorem are equivalent to each other. Hence, we can focus on the part over $T^{r,d}$. In other words, we only need to prove the theorem for any log-smooth model $\left(X',D'\right)$ over $\left(X,D\right)$, that is identical over $T^{r,d}$. In the rest of the proof, we use $P$ and $T$ to replace $P^{r,d}$ and $T^{r,d}$ respectively to simplify the notations. 

Actually, we can also restrict ourselves to the case that $r=0$ or $1$, which suffices to show the statement in general. See the second half of the proof of Theorem \ref{T:over P^{r,d}} in Section 2 for details.

\emph{Step 1.}
We have a finite morphism $[n]: P\to P$ which is defined in the following way. In the interior part $T$, it is defined by multiplying by $n$ (if we use addition for the group structure on $T$). It can be canonically extended onto the boundary $L$ \cite{W16}. Note that it is only ramified over the boundary $L$ of degree $n$. Take $n=k$, and apply the finite base change, and let $\left(X_n,D_n\right)$ be the nomalization of the fiber product. $f_n:X_n\to P$ is the induced morphism. Let $\left(X', D'\right)$ be a log-resolution of $\left(X_n, D_n\right)$, which can be achieved by only blowing up loci contained in $f_n^{-1}\left(L\right)$. According to Step 0, it suffices to prove the theorem for $f': X'\to P$.

\begin{center}
\begin{tikzcd}
X'\arrow{r}{\pi}\arrow{rrd}[swap]{f'}\arrow[bend left]{rrr}{\phi}
&X_n\arrow{r}\arrow{rd}{f_n}
&X\times _{P}P \arrow{d}\arrow{r}
&X\arrow{d}{f}
\\
&
&
P\arrow{r}{[k]}
&P
\end{tikzcd}
\end{center}

Note that we have $\phi^*\omega_X\left(D\right) \subset \omega_{X'}\left(D'\right)$, and we have an inclusion $p^*\mathcal{A}^{\otimes k}\to [k]^* p^*\mathcal{A}$ \cite[II, 6]{Mu08}. Hence
\begin{align*}
&\phi^*\left(\left(\omega_X\left(D\right)\right)^{\otimes k}\otimes f^*\left(p^*\mathcal{A}^{-1}\otimes \mathscr{O}_{P^{r,d}}\left(-L\right)\right)\right)\\
=& \left(\phi^*\omega_X\left(D\right)\right)^{\otimes k} \otimes f'^* [k]^*\mathscr{O}_{P}\left(-L\right)\otimes  f'^*[k]^* p^*\mathcal{A}^{-1}\\
\subset &\left(\omega_{X'}\left(D'\right)\right)^{\otimes k} \otimes f'^* \mathscr{O}_{P}\left(-kL\right)\otimes f'^* p^* \mathcal{A}^{-k}\\
=&\left(\omega_{X'}\left(D'\right) \otimes f'^*\mathscr{O}_{P}\left(-L\right)\otimes f'^*p^* \mathcal{A}^{-1}\right)^{\otimes k},
\end{align*}
which, by assumption, has a global section. Denote $E'=D'-f'^{-1}L.$ Note that $\mathscr{O}_{X'}\left(D'-E'\right)\subset f'^* \mathscr{O}_{P}\left(L\right)$, we have that 
$$H^0\left(X', \left(\omega_{X'}\left(E'\right)\otimes f'^*p^* \mathcal{A}^{-1}\right)^{\otimes k}\right)\neq 0.$$
Hence we reduce to the case that there exists a positive integer $k$ such that 
$$H^0\left(X, \left(\omega_{X}\left(E\right)\otimes f^*p^* \mathcal{A}^{-1}\right)^{\otimes k}\right)\neq 0,$$
where $E=D-f^{-1}L$.

\emph{Step 2.}
Let $\mathcal{B}=\omega_{X}\left(E\right)\otimes f^*p^* \mathcal{A}^{-1}$, a line bundle over $X$. Do the cyclic cover induced by a section $s$ of $\mathcal{B}^{\otimes k}$ and resolve it. We get $\psi: Y\to X.$ Denote $D^Y=\psi^{-1}D$ which can also be assumed to have normal crossings after further blowing up if necessary. We denote by $g:\left(Y,D^Y\right)\to \left(P,L\right)$  the induced morphism. Hence we have the following commutative diagram
\begin{center}
\begin{tikzcd}
\left(Y,D^Y\right)\arrow{r}{\psi}\arrow{rd}{g}
&\left(X,D\right)\arrow{d}{f}
\\

&\left(P,L\right).
\end{tikzcd}
\end{center}

By the construction, there is a tautological section of $\psi^* \mathcal{B}$ on $Y$. Hence we have a natural injection, 
$$\psi^*\mathcal{B}^{-1}\to\mathscr{O}_Y,$$
which induces the following injection
\begin{equation}\label{e:inj}
\psi^*\left(\mathcal{B}^{-1}\otimes \Omega_X^k\left(\log   \:D\right)\right)\to \Omega_Y^k\left(\log   \:D^Y\right).
\end{equation}
Actually, both of them are isomorpisms over the complement of the zero-locus of $s$.

\emph{Step 3.}
Let $m=r+d=\dim P.$
We have that $\Omega_{P}^1\left(\log    L\right)$ is a trivial $m$-dimensional vector bundle over $P$. Denote $V=H^0\left(P,\Omega_{P}^1\left(\log L\right)\right),$ and denote the space of log-cotangent of $(P,L)$ by $T^*_{(P,L)}$. Hence we have
$$T^*_{(P,L)}= P\times V.$$
Consider the following commutative diagram, which contains all the morphisms we will need in the proof:
\begin{center}
\begin{tikzcd}
Y\arrow[leftarrow]{r}{p_Y}\arrow[bend right]{dd}[swap]{g}\arrow{d}{\psi}&
Y\times V\arrow[bend left=45]{dd}{\hat{g}}\arrow{d}{\hat{\psi}}
\\
X\arrow[leftarrow]{r}{p_{X}}\arrow{d}{f}&
X\times V\arrow{d}{\hat{f}}

\\
P\arrow[leftarrow]{r}{p_{P}}\arrow{d}{p}&
P\times V\arrow{r}{p_V}\arrow{d}{\hat{p}}&
V\\
A^d\arrow[leftarrow]{r}{p_A}&
A^d\times V
\end{tikzcd}
\end{center}
where all the morphisms are natural projections or naturally induced by $f$ and $\psi$.

Let $n=\dim X=\dim Y$. Denote
  \begin{equation*} 
  C_{Y, D^Y}=[p_Y^*\mathscr{O}_Y \to p_Y^*\Omega_Y^1\left(\log   D^Y\right) \to ... \to p_Y^*\Omega_Y^n\left(\log    D^Y\right)],
  \end{equation*}
placed in cohomological degrees $-n, -n+1, ... , 0$, which is the Koszul complex given by the tautological section of $p_{P}^*\Omega_{P}^1\left(\log   L\right)$. Note that $p_{Y}$ is an affine morphism, and $p_{Y*}\left(C_{Y,D^Y}\right)$ is the following graded complex
$$C_{Y, D^Y, \bullet}=[\mathscr{O}_Y\otimes S_{\bullet-m} \to \Omega_Y^1\left(\log    D^Y\right)\otimes S_{\bullet-m+1} \to 
... \to \Omega_Y^n\left(\log    D^Y\right)\otimes S_{\bullet-m+n}],
$$
where $S_\bullet:= \text{Sym} V^*$. The differential in the complex is induced by the evaluation morphism $V\otimes \mathscr{O}_Y \to \Omega_Y^1.$
We define $C_{X,D}$, $C_{X,D,\bullet}$ in a similar way.
Denote 
$$\mathcal{L}=g^*p^*\mathcal{A}\otimes \mathscr{O}_Y\left(g^{-1}L\right),$$
and
$$\hat{\mathcal{L}}= p_Y^*\left(\mathcal{L}\right).$$
\begin{claim}[Main Claim]\label{C:main claim}
If $r=0$ or $1$,
$$p_{V*}R^0\hat{g}_*\left(\hat{\mathcal{L}}^{-1}\otimes C_{Y,D^Y}\right)$$
is torsion free.
\end{claim}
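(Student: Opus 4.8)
The plan is to follow verbatim the strategy of Step 5 in the proof of Theorem \ref{T:over abelian variety}, substituting for each input from \cite{PS13, PS14} its logarithmic counterpart established in Sections 5 and 6. Concretely, I would first set
$$\mathcal{E}=\mathbf{R}p_{V*}R^0\hat{g}_*\left(\hat{\mathcal{L}}\otimes C_{Y,D^Y}\right),$$
the object built from $\hat{\mathcal{L}}$ rather than $\hat{\mathcal{L}}^{-1}$, and show that $\mathcal{H}^l\mathcal{E}=0$ for all $l>0$, so that $\mathcal{E}$ is a single coherent sheaf. Since $p_A$ and $p_Y$ are affine, $H^l\left(P\times V, R^0\hat{g}_*\left(\hat{\mathcal{L}}\otimes C_{Y,D^Y}\right)\right)$ is computed on $A^d$ as $H^l\left(A^d, R^0g_*\left(C_{Y,D^Y,\bullet}\right)\otimes\mathcal{A}\right)$. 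The logarithmic Laumon formula (Proposition \ref{P:gr direct}) identifies $R^0g_*\left(C_{Y,D^Y,\bullet}\right)$ with $\text{Gr}^F_\bullet\mathcal{R}^0g_+\widetilde{\omega}_Y$, and the vanishing result of Section 5 (Proposition \ref{P:v}) makes these groups vanish for $l>0$. The degeneration of the Leray spectral sequence for $p_{V*}$ then gives $\mathcal{H}^l\mathcal{E}=0$, exactly as in the non-log case.

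The second step is to establish the duality identity
$$p_{V*}R^0\hat{g}_*\left(\hat{\mathcal{L}}^{-1}\otimes C_{Y,D^Y}\right)=(-1_V)^*\mathbf{R}^0\mathcal{H}om\left(\mathcal{E},\mathscr{O}_V\right),$$
from which the claim follows at once, since the zeroth sheaf-$\mathcal{H}om$ into $\mathscr{O}_V$ of any coherent module is automatically torsion free. To prove this identity I would apply Grothendieck duality to the proper morphism $p_V$, commute $\mathbb{D}$ past $\mathbf{R}p_{V*}$, and then invoke Proposition \ref{P:direct} (with the boundary $D^Y$ and $r\leq 1$) to identify $R^0\hat{g}_*\left(C_{Y,D^Y}\right)$ with the coherent sheaf $\mathcal{G}\left(\mathcal{H}^0g_+\widetilde{\omega}_Y\right)$ on the log-cotangent bundle $T^*_{(P,L)}=P\times V$ attached to the relevant mixed Hodge module. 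The logarithmic duality formula for mixed Hodge modules (Proposition \ref{P:dual gr}), together with the fact that $\mathcal{R}^0g_+\widetilde{\omega}_Y$ is self-dual up to a Tate twist, then converts the Grothendieck dual back into the $\hat{\mathcal{L}}^{-1}$-twisted object after the sign involution $(-1_V)$.

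The hard part will be precisely this self-duality step, and it is here that the hypothesis $r=0$ or $1$ enters. In the quasi-abelian setting the direct image living on the compactification $(P,L)$ need not be self-dual once a nontrivial logarithmic boundary is present, so one cannot simply quote the abelian-variety duality of \cite{PS13}. For $r\leq 1$ the boundary geometry is mild enough to control directly: $L$ is empty for $r=0$, and for $r=1$ it is the disjoint union $L=L_1+L_2$ of two sections of a $\mathbb{P}^1$-bundle over $A^d$. In this range the logarithmic comparison theorem recalled from \cite{W17a} in Section 5 applies and forces the dual Hodge module to agree with the original one up to a Tate twist, which is what makes the interchange of $\hat{\mathcal{L}}$ and $\hat{\mathcal{L}}^{-1}$ legitimate. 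Verifying the compatibility between Grothendieck–Serre duality on $P\times V$ and Hodge-module duality, and checking that the logarithmic comparison map is an isomorphism in the required cohomological range, is therefore the technical core of the argument and is exactly what Sections 5 and 6 are devoted to.
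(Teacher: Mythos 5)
Your overall architecture is the right one --- define an auxiliary object $\mathcal{E}$ from a positively twisted complex, kill its higher cohomology sheaves via the log Laumon formula plus the vanishing of Section 5, and then read off torsion-freeness from Grothendieck duality combined with Hodge-module duality --- but your choice of $\mathcal{E}$ is off by a twist, and this breaks both halves of the argument. The paper takes
$$\mathcal{E}=\mathbf{R}p_{V*}R^0\hat{g}_*\left(\hat{\mathcal{L}}\otimes p_Y^*\mathscr{O}_Y\left(-D^Y\right)\otimes C_{Y,D^Y}\right),$$
not $\mathbf{R}p_{V*}R^0\hat{g}_*\left(\hat{\mathcal{L}}\otimes C_{Y,D^Y}\right)$. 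The point is that, writing $E^Y=D^Y-g^{-1}L$, one has $\hat{\mathcal{L}}\otimes p_Y^*\mathscr{O}_Y\left(-D^Y\right)=p_Y^*\left(\mathscr{O}_Y\left(-E^Y\right)\otimes g^*p^*\mathcal{A}\right)$ and $\hat{\mathcal{L}}^{-1}=p_Y^*\left(\mathscr{O}_Y\left(E^Y-D^Y\right)\otimes g^*p^*\mathcal{A}^{-1}\right)$, and Proposition \ref{P:gr direct} only identifies the two twists $C_{Y,D^Y}\otimes p_Y^*\mathscr{O}_Y\left(-E^Y\right)$ and $C_{Y,D^Y}\otimes p_Y^*\mathscr{O}_Y\left(E^Y-D^Y\right)$ with graded objects of mixed Hodge modules, namely $\mathcal{G}\left(\left(\mathcal{H}^0 g_+\widetilde{\omega}_Y[!D^Y]\right)_{*L}\right)$ and $\mathcal{G}\left(\left(\mathcal{H}^0 g_+\widetilde{\omega}_Y[*D^Y]\right)_{!L}\right)$ respectively. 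Your untwisted $R^0g_*\left(C_{Y,D^Y,\bullet}\right)$ is neither of these (it carries the twist $\mathscr{O}_Y\left(g^{-1}L\right)$), so neither the vanishing of Proposition \ref{P:v} nor an identification with $\text{Gr}^F\mathcal{H}^0 g_+\widetilde{\omega}_Y$ applies to it; and its Grothendieck dual on $P\times V$ is not the $\hat{\mathcal{L}}^{-1}$-twisted object you need. (In the non-log case $D^Y=0$, so the missing twist is invisible there --- that is exactly where copying Step 5 verbatim goes wrong.)

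Relatedly, the ``self-duality up to a Tate twist'' you lean on in your last paragraph is not the mechanism. Once $D^Y\neq 0$ no single object in play is self-dual: the duality functor exchanges the two distinct extensions, $\mathbf{D}_P\left(\mathcal{H}^0 g_+\widetilde{\omega}_Y[!D^Y]\right)=\mathcal{H}^0 g_+\widetilde{\omega}_Y[*D^Y]$ up to a Tate twist, and Propositions \ref{P:dual} and \ref{P:dual gr} then swap ${}_{*L}$ with ${}_{!L}$. That is precisely why the two members of the dual pair carry the two different twists $\mathscr{O}_Y\left(-E^Y\right)$ and $\mathscr{O}_Y\left(E^Y-D^Y\right)$ rather than $\hat{\mathcal{L}}^{\pm1}$ alone. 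The hypothesis $r\leq 1$ enters not to force self-duality but because Propositions \ref{P:dual}--\ref{P:dual gr} and Corollary \ref{coro:vanishing of gr} require the boundary $L$ to be smooth (for $r=1$ it is the disjoint union of the two sections $L_1$ and $L_2$). Once you replace your $\mathcal{E}$ by the correctly twisted one and track the $[*D^Y]/[!D^Y]$ and ${}_{*L}/{}_{!L}$ decorations through the duality, your outline becomes the paper's proof.
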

\emph{Step 4.}
We continue the proof assuming the claim.

We set $\mathcal{F}$ as the image of the map 
$$R^0\hat{g}_*\left(\hat{\mathcal{L}}^{-1}\otimes\hat{\psi}^*\left(p_X^*\mathcal{B}^{-1}\otimes C_{X,D}\right)\right)\to R^0\hat{g}_*\left(\hat{\mathcal{L}}^{-1}\otimes C_{Y,D^Y}\right)$$
induced by the injection (\ref{e:inj}).

Pick any global log-one-form 
$$\theta\in H^0\left(P, \Omega^1_{P}\left(\log   L\right)\right)=V.$$
The restriction of $p_X^*\mathcal{B}^{-1}\otimes C_{X,D}$ on the fiber of $p_V\circ \hat{f}: X\times V\to V$ over $\theta$ is just
$$\mathcal{B}^{-1}\to\mathcal{B}^{-1}\otimes\Omega^1_X\left(\log   \:D\right)\to...\to\mathcal{B}^{-1}\otimes\Omega^n_X\left(\log   \:D\right),$$
the Koszul complex defined by $f^*\left(\theta\right)$ and twisted by $\mathcal{B}^{-1}$. Since the zero-locus of $f^*\left(\theta\right)$ is empty if and only if the above complex is exact, to prove the theorem, it suffices to prove $p_V\left(\text{Supp}\mathcal{F}\right)=V$.

Note that $p_{P}$ and $p_Y$ are affine, so we have that $p_{P*}$ and $p_{Y*}$ are exact, and $p_{P*}\circ R^0\hat{g}_*= R^0 g_*\circ p_{Y*}.$ Hence, $p_{P*}\mathcal{F}$ is a graded $p_{P*}\mathscr{O}_{P\times V}$-module $\mathcal{F}_\bullet$ given by the image of
$$p_{P*}R^0\hat{g}_*\left(\hat{\mathcal{L}}^{-1}\otimes\hat{\psi}^*\left(p_X^*\mathcal{B}^{-1}\otimes C_{X,D}\right)\right)\to p_{P*}R^0\hat{g}_*\left(\hat{\mathcal{L}}^{-1}\otimes C_{Y,D^Y}\right),$$
which is the same as the image of
$$
R^0g_*\left(\mathcal{L}^{-1} \otimes\psi^*\left(\mathcal{B}^{-1}\otimes C_{X,D,\bullet}\right)\right)
\to R^0g_*\left(\mathcal{L}^{-1} \otimes C_{Y,D^Y,\bullet}\right).
$$

If $p_V\left(\text{Supp}\mathcal{F}\right)\neq V$, the subsheaf 
  \begin{equation*}
  p_{V*}\mathcal{F}\subset p_{V*} R^0\hat{g}_*\left(\hat{\mathcal{L}}^{-1}\otimes C_{Y,D^Y}\right)
  \end{equation*}
would then be torsion and hence zero. Therefore, since $V$ is a vector space, $H^0\left(P, p_{P*}\mathcal{F}\right)=H^0\left(P,\mathcal{F}_\bullet\right)=0$. Recall that $\mathcal{B}=\omega_{X}\left(E\right)\otimes f^* p^*\mathcal{A}^{-1}$ and $\mathcal{L}=g^*p^*\mathcal{A}\otimes \mathscr{O}_Y\left(g^{-1}L\right).$ We have 
\begin{align*}
\mathcal{F}_{-n+r}=& g_*\left(\mathcal{L}^{-1} \otimes \psi^*\left(\mathcal{B}^{-1}\otimes \omega_X\left(D\right)\right)\right)\\
=& g_*\left(\mathscr{O}_Y\left(\psi^*\left(D-E\right)-g^{-1}L\right)\right)\\
=&g_*\left(\mathscr{O}_Y\left(\psi^*f^{-1}L-g^{-1}L\right)\right)
\end{align*}
This forces 
$$H^0\left(P, g_*\left(\mathscr{O}_Y\left(\psi^*f^{-1}L-g^{-1}L\right)\right)\right)=0,$$
which is absurd, since $\psi^*f^{-1}L-g^{-1}L$ is effective over $Y$.
\end{proof}

\section{Log comparison of mixed Hodge modules and related vanishings}
In this section, we first recall some results about the logarithmic comparison proved in \cite{W17a}. Then we deduce some vanishing theorems that will be used in proving the Main Claim 

Notations as in \cite{W17a}, we use strict right $\widetilde{\mathscr{D}}$-modules to represent mixed Hodge modules, forgetting the weight filtration.

Given a mixed Hodge module $\mathcal{M}$ on a smooth variety $X$, and given a normal crossing boundary divisor $D=D_1+...+D_m$ on $X$, we introduce two strict $\widetilde{\mathscr{D}}_{\left(X,D\right)}$-modules:
$$\mathcal{M}_{*D}=\mathbf{V}^D_\mathbf{0}\mathcal{M}[*D]:=\cap_i V^{D_i}_0\mathcal{M}[*D],$$
$$\mathcal{M}_{!D}=\mathbf{V}^D_{<\mathbf{0}}\mathcal{M}[!D]:=\cap_i V^{D_i}_{<0}\mathcal{M}[!D],$$
where $\mathcal{M}[*D]$ (resp. $\mathcal{M}[!D]$) is the localization (resp. dual localization) of the mixed Hodge module $\mathcal{M}$ along $D$ \cite[9]{SS16}, and $V^{D_i}$ is the $V$-filtration respect to $D_i$. Note that $V^{D_i}_{<0}$ only depends on $\mathcal{M}\big|_{X\setminus D_i}$. In particular, we have
$$\mathbf{V}^D_{<\mathbf{0}}\mathcal{M}[!D]=\mathbf{V}^D_{<\mathbf{0}}\mathcal{M}=\mathbf{V}^D_{<\mathbf{0}}\mathcal{M}[*D].$$
\begin{prop}[\cite{W17a}]\label{P:log rep}
Notations as above, assume that $\mathcal{M}$ is of normal crossing type respect to $D$ or $D$ is smooth. Then we have
$$\mathcal{M}_{*D}\otimes^{\mathbf{L}}_{\widetilde{\mathscr{D}}_{\left(X,D\right)}}\widetilde{\mathscr{D}}_{X}\simeq \mathcal{M}[*D],$$
 and  
 $$\mathcal{M}_{!D}\otimes^{\mathbf{L}}_{\widetilde{\mathscr{D}}_{\left(X,D\right)}}\widetilde{\mathscr{D}}_{X}\simeq\mathcal{M}[!D].$$ 
In particular, we have the following quasi-isomorphisms in the derived category of graded $\widetilde{\mathbb{C}}$-modules:
\begin{align*}
\textsl{Sp}_{\left(X,D\right)}\mathcal{M}_{*D}\simeq& \textsl{Sp}_X\mathcal{M}[*D],\\
\textsl{Sp}_{\left(X,D\right)}\mathcal{M}_{!D}\simeq& \textsl{Sp}_X\mathcal{M}[!D].
\end{align*}
\end{prop}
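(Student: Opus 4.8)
The statement is local on $X$ and compatible with restriction to coordinate charts, so the plan is to first reduce to the case where $X$ is a polydisc with coordinates $x_1,\dots,x_n$ and $D=\{x_1\cdots x_m=0\}$, writing $\theta_i=x_i\partial_{x_i}$ for the logarithmic generators of $\widetilde{\mathscr{D}}_{(X,D)}$. Both sides of the asserted quasi-isomorphism are quasi-coherent over $\mathscr{O}_X$, and the functor $-\otimes^{\mathbf{L}}_{\widetilde{\mathscr{D}}_{(X,D)}}\widetilde{\mathscr{D}}_X$, the (dual) localizations $\mathcal{M}[*D],\mathcal{M}[!D]$, and the specialization functors all commute with such restriction, so no information is lost by localizing.

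The first key point is that $\widetilde{\mathscr{D}}_X$ is \emph{free}, hence flat, as a left $\widetilde{\mathscr{D}}_{(X,D)}$-module, with basis $\{\partial_{x_1}^{k_1}\cdots\partial_{x_m}^{k_m}\}_{k_i\ge 0}$. Locally this is checked by filtering by total order in the $\partial_{x_i}$ and observing, via the identities $x_i^{k}\partial_{x_i}^{k}=\theta_i(\theta_i-1)\cdots(\theta_i-k+1)\in\widetilde{\mathscr{D}}_{(X,D)}$, that the associated graded is free over that of $\widetilde{\mathscr{D}}_{(X,D)}$ on these monomials. Because $\widetilde{\mathscr{D}}_X$ is left-flat, the derived tensor product collapses to the ordinary one, and the problem is reduced to proving that the natural right-action map
\[
\Phi\colon\ \mathcal{M}_{*D}\otimes_{\widetilde{\mathscr{D}}_{(X,D)}}\widetilde{\mathscr{D}}_X\ \longrightarrow\ \mathcal{M}[*D],\qquad v\otimes P\mapsto vP,
\]
is an isomorphism of right $\widetilde{\mathscr{D}}_X$-modules, and symmetrically for the $!$-version with $\mathcal{M}_{!D}=\mathbf{V}^D_{<\mathbf{0}}\mathcal{M}$.

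I would then prove that $\Phi$ is an isomorphism using the structure of the Kashiwara--Malgrange filtration. Surjectivity amounts to the statement that the lattice $\mathbf{V}^D_{\mathbf{0}}\mathcal{M}[*D]$ generates the localization under iterated application of $\partial_{x_1},\dots,\partial_{x_m}$, while injectivity is equivalent, after passing to the monomial basis above, to the assertion that along each component the relevant maps $x_i$ (resp. $\partial_{x_i}$) on the graded pieces $\mathrm{gr}^{V^{D_i}}$ are injective in the range cut out by $V^{D_i}_0$ (resp. $V^{D_i}_{<0}$). For a single smooth $D$ this is classical one-variable $V$-filtration theory. For the general normal-crossing case I would argue by d\'evissage: the hypothesis that $\mathcal{M}$ is of normal crossing type lets me filter $\mathcal{M}$ locally by sub-quotients that are rank-one along each branch with prescribed residues, on which all of the $V^{D_i}$ are explicit and mutually compatible; since $\Phi$ fits into long exact sequences and both $\mathcal{M}[*D]$ and $\mathbf{V}^D_{\mathbf{0}}(-)$ are exact on such filtrations, the five-lemma reduces the isomorphism to these explicit pieces, where it is a direct computation.

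Finally, the $\textsl{Sp}$-quasi-isomorphisms are formal consequences: the logarithmic and ordinary specialization complexes are related by $\textsl{Sp}_{(X,D)}N\simeq\textsl{Sp}_X\big(N\otimes^{\mathbf{L}}_{\widetilde{\mathscr{D}}_{(X,D)}}\widetilde{\mathscr{D}}_X\big)$ for any strict $\widetilde{\mathscr{D}}_{(X,D)}$-module $N$, so applying this to $N=\mathcal{M}_{*D}$ and $N=\mathcal{M}_{!D}$ and inserting the two isomorphisms just established yields the claimed statements. The main obstacle is the injectivity of $\Phi$ in the genuinely multivariable normal-crossing situation: one must check that the several filtrations $V^{D_1},\dots,V^{D_m}$ can be intersected coherently and that $\cap_i V^{D_i}_0$ (resp. $\cap_i V^{D_i}_{<0}$) generates $\mathcal{M}[*D]$ (resp. $\mathcal{M}[!D]$) \emph{freely} in the $\partial$-directions. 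It is precisely to guarantee this compatibility, and thereby make the d\'evissage to rank-one pieces available, that the normal-crossing-type hypothesis (or smoothness of $D$) is imposed.
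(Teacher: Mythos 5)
The paper itself offers no proof of Proposition \ref{P:log rep}: it is imported verbatim from \cite{W17a}, so there is no in-paper argument to compare yours against. Judged on its own, your proposal has a fatal gap at its ``first key point''. The sheaf $\widetilde{\mathscr{D}}_X$ is \emph{not} free --- and in fact not even flat --- as a left $\widetilde{\mathscr{D}}_{(X,D)}$-module. The monomials $\partial_{x_1}^{k_1}\cdots\partial_{x_m}^{k_m}$ do generate, but they are not a basis: already in one variable, with $\theta=x\partial_x$, the element $\theta\otimes e_0-x\otimes e_1$ is a nonzero element of the free module $\bigoplus_k\widetilde{\mathscr{D}}_{(X,D)}\,e_k$ mapping to $\theta-x\partial_x=0$. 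Your graded justification fails for the same reason: $\mathrm{gr}\,\mathscr{D}_X=\mathscr{O}_X[\xi_1,\dots,\xi_n]$ is not flat over $\mathscr{O}_X[x_1\xi_1,\dots,x_m\xi_m,\xi_{m+1},\dots,\xi_n]$, since the corresponding map of spectra contracts the positive-dimensional locus $\{x_1=0\}$ and hence has jumping fibre dimension. Non-flatness is genuine, not just a wrong choice of basis: for $n=m=1$ the right module $N=\mathscr{D}_{(X,D)}/\left(x\mathscr{D}_{(X,D)}+\theta\mathscr{D}_{(X,D)}\right)$ has free resolution $0\to\mathscr{D}_{(X,D)}\xrightarrow{\left(-(\theta+1),\,x\right)}\mathscr{D}_{(X,D)}^{2}\xrightarrow{(x,\theta)}\mathscr{D}_{(X,D)}\to N\to0$ (using $\theta x=x(\theta+1)$), whereas over $\mathscr{D}_X$ the syzygies of $(x,\theta)$ are generated by $(-\partial_x,1)$; tensoring gives $\mathrm{Tor}_1^{\mathscr{D}_{(X,D)}}\left(N,\mathscr{D}_X\right)\cong\mathscr{D}_X/x\mathscr{D}_X\neq0$. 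The same computation persists for the Rees algebras $\widetilde{\mathscr{D}}$.

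Consequently the derived tensor product does not collapse for free, and the actual content of the proposition is precisely that for the \emph{particular} lattices $\mathbf{V}^D_{\mathbf{0}}\mathcal{M}[*D]$ and $\mathbf{V}^D_{<\mathbf{0}}\mathcal{M}$ all higher Tor's vanish \emph{and} the degree-zero term is the (dual) localization. Your plan establishes at best that the underived map $\Phi$ is an isomorphism, which is strictly weaker than the stated quasi-isomorphism of complexes. The $V$-filtration conditions you defer to ``injectivity of $\Phi$'' are in fact exactly what must be used to kill $\mathrm{Tor}_{\geq1}$: the false freeness has to be replaced by the explicit Koszul-type resolution of $\widetilde{\mathscr{D}}_X$ over $\widetilde{\mathscr{D}}_{(X,D)}$ coming from the relations $\theta_i=x_i\partial_{x_i}$, and the resulting $\mathrm{Tor}_1$ along each $D_i$ is controlled by the injectivity and bijectivity properties of $x_i$ and $\partial_{x_i}$ on the $V^{D_i}$-graded pieces in the ranges selected by $\mathbf{V}_{\mathbf{0}}$ and $\mathbf{V}_{<\mathbf{0}}$; this is also where the normal-crossing-type (or smoothness) hypothesis is genuinely used, to make the several $V$-filtrations compatible. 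The dévissage skeleton and the final formal step relating $\textsl{Sp}_{(X,D)}$ to $\textsl{Sp}_X$ are reasonable, but as written the proof is not correct.
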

Let $f:\left(X,D^X\right)\to \left(Y,D^Y\right)$ be a projective morphism between two log-smooth pairs. Assume that $D^Y$ is smooth, which means $D^Y$ has only one component or all components of $D^Y$ do not intersect each other.
\begin{prop}[\cite{W17a}]
Notations as above and assume $D^X=f^{-1}D^Y$. Given a mixed Hodge module $\mathcal{M}$ of normal crossing type respect to a normal crossing divisor $D'$ that contains $D^X$ on $X$, we have that the direct image functor $f_\#$ is strict on both  $\mathcal{M}_{*D^X}$ and $\mathcal{M}_{!D^X}$, and
$$\mathcal{H}^if_\#\left( \mathcal{M}_{*D^X}\right)=\left(\mathcal{H}^i f_+\mathcal{M}\right)_{*D^Y},$$
$$\mathcal{H}^if_\# \left(\mathcal{M}_{!D^X}\right)=\left(\mathcal{H}^i f_+\mathcal{M}\right)_{!D^Y}.$$
\end{prop}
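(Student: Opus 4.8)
The plan is to deduce the statement from the already-known behaviour of Saito's direct image $f_+$ on the ordinary mixed Hodge modules $\mathcal{M}[*D^X]$ and $\mathcal{M}[!D^X]$, and to transport that behaviour to the logarithmic side through the comparison isomorphism of Proposition \ref{P:log rep}. I would treat the $*$-case in detail and recover the $!$-case by duality at the end, since both $f_\#$ and the operations $(-)_{*D}$, $(-)_{!D}$ are interchanged under the log form of $f_+\mathbb{D}\simeq \mathbb{D}f_+$ for projective $f$, applied to $\mathbb{D}\mathcal{M}$.

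First I would record the non-logarithmic shadow of the desired formula. The $*$-localization along $D^X$ is the mixed Hodge module pushforward $j_{X*}j_X^{*}$ from the complement $j_X:X\setminus D^X\hookrightarrow X$; since $D^X=f^{-1}D^Y$, the open complements form a Cartesian square over $f$ with its restriction $f_U$ over $Y\setminus D^Y$. Proper base change then gives $f_+\circ j_{X*}j_X^{*}\simeq j_{Y*}j_Y^{*}\circ f_+$, and Saito's strictness of $f_+$ on mixed Hodge modules lets one commute $\mathcal{H}^i$ past localization, yielding the identity of mixed Hodge modules
$$\mathcal{H}^i f_+\left(\mathcal{M}[*D^X]\right)\simeq \left(\mathcal{H}^i f_+\mathcal{M}\right)[*D^Y].$$

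Next comes the bridge between the two worlds: the ring extension $\widetilde{\mathscr{D}}_{(X,D^X)}\to \widetilde{\mathscr{D}}_X$ should be compatible with direct image, i.e. there is a base-change/projection formula
$$f_\#(\mathcal{N})\otimes^{\mathbf{L}}_{\widetilde{\mathscr{D}}_{(Y,D^Y)}}\widetilde{\mathscr{D}}_Y \simeq f_+\left(\mathcal{N}\otimes^{\mathbf{L}}_{\widetilde{\mathscr{D}}_{(X,D^X)}}\widetilde{\mathscr{D}}_X\right)$$
for any log-$\widetilde{\mathscr{D}}_{(X,D^X)}$-module $\mathcal{N}$, which is the natural functoriality of the transfer bimodule and again uses $D^X=f^{-1}D^Y$ so that the log structures match along $f$. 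Taking $\mathcal{N}=\mathcal{M}_{*D^X}$ and applying Proposition \ref{P:log rep} on both sides identifies
$$f_\#(\mathcal{M}_{*D^X})\otimes^{\mathbf{L}}\widetilde{\mathscr{D}}_Y \simeq f_+(\mathcal{M}[*D^X]),$$
and combined with the previous display this matches the cohomology sheaves after extending scalars to $\widetilde{\mathscr{D}}_Y$. Since $(\mathcal{H}^i f_+\mathcal{M})_{*D^Y}\otimes^{\mathbf{L}}\widetilde{\mathscr{D}}_Y \simeq (\mathcal{H}^i f_+\mathcal{M})[*D^Y]$ by Proposition \ref{P:log rep} once more, the desired formula on $Y$ is exactly what the comparison detects.

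The main obstacle, and the final step, is the descent: a match after the extension $-\otimes^{\mathbf{L}}\widetilde{\mathscr{D}}_Y$ does not by itself give strictness of $f_\#$ nor the equality of the underlying log-$\widetilde{\mathscr{D}}_{(Y,D^Y)}$-modules, since passing to $\widetilde{\mathscr{D}}_Y$ can a priori kill the $z$-torsion whose vanishing \emph{is} strictness. I would therefore prove strictness directly: equip $\mathcal{M}_{*D^X}$ with its Hodge filtration as a filtered log-$\mathscr{D}$-module, run the logarithmic analogue of Saito's pushforward, and reduce the degeneration of the relevant spectral sequence to Saito's $E_1$-degeneration for $f_+\mathcal{M}[*D^X]$ through the filtered compatibility of the comparison, namely $\textsl{Sp}_{(X,D)}\mathcal{M}_{*D}\simeq \textsl{Sp}_X\mathcal{M}[*D]$ as graded complexes. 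Once $f_\#$ is known to be strict on $\mathcal{M}_{*D^X}$, each $\mathcal{H}^i f_\#(\mathcal{M}_{*D^X})$ is a genuine strict log-$\widetilde{\mathscr{D}}_{(Y,D^Y)}$-module whose extension to $\widetilde{\mathscr{D}}_Y$ is $(\mathcal{H}^i f_+\mathcal{M})[*D^Y]$; because $(\mathcal{H}^i f_+\mathcal{M})_{*D^Y}$ is characterized among strict log modules by having precisely this localization, the two coincide, and the $!$-case then follows by the duality argument indicated above.
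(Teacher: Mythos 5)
First, a caveat: the paper does not prove this proposition at all --- it is quoted from \cite{W17a} as background, so there is no in-paper argument to compare yours against. Judged on its own terms, your outline has a reasonable overall shape (transport the known behaviour of $f_+$ on $\mathcal{M}[*D^X]$ through the comparison of Proposition \ref{P:log rep}, using proper base change for the commutation of $f_+$ with localization), and you correctly flag that a match after $-\otimes^{\mathbf{L}}\widetilde{\mathscr{D}}_Y$ proves neither strictness nor the identification of log modules. But your resolution of that obstacle contains a genuine gap.

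The gap is the final descent step: you assert that $\left(\mathcal{H}^i f_+\mathcal{M}\right)_{*D^Y}$ is ``characterized among strict log modules by having precisely this localization.'' That is false. A strict $\widetilde{\mathscr{D}}_{(Y,D^Y)}$-lattice inside $\left(\mathcal{H}^i f_+\mathcal{M}\right)[*D^Y]$ is far from unique: by definition $\mathcal{N}_{*D}=\mathbf{V}^{D}_{\mathbf{0}}\mathcal{N}[*D]$ is one particular step of the $V$-filtration, and the other steps $\mathbf{V}^{D}_{\mathbf{k}}$ are equally strict log modules with the same scalar extension to $\widetilde{\mathscr{D}}_Y$, since they all agree off $D^Y$. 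So knowing that $\mathcal{H}^i f_\#\left(\mathcal{M}_{*D^X}\right)$ extends to $\left(\mathcal{H}^i f_+\mathcal{M}\right)[*D^Y]$ does not pin it down; the actual content of the proposition is that the lattice produced by $f_\#$ from $\mathbf{V}^{D^X}_{\mathbf{0}}\mathcal{M}[*D^X]$ is exactly the step $\mathbf{V}^{D^Y}_{\mathbf{0}}$ of the target. Establishing this requires compatibility of the projective direct image with the $V$-filtration along $D^Y$ (Saito's bistrictness with respect to $F$ and $V$ simultaneously), which your sketch never engages; it is also where the hypotheses that $D^Y$ is smooth, that $D^X=f^{-1}D^Y$, and that $\mathcal{M}$ is of normal crossing type actually enter --- your argument uses essentially none of them. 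A secondary issue: your duality reduction for the $!$-case invokes the log duality of Proposition \ref{P:dual} upstairs on $\left(X,D^X\right)$, but that proposition is only stated for a smooth boundary, whereas $D^X=f^{-1}D^Y$ is in general only normal crossing, so this reduction needs a separate justification (or the $!$-case should be handled directly via $\mathbf{V}^{D^X}_{<\mathbf{0}}$).
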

\begin{prop}\label{P:direct}
Given $f:\left(X,D^X\right)\to \left(Y,D^Y\right)$ as above, denote $E^X=D^X-f^{-1}D^Y$. We have both $f_\# \widetilde{\omega}_X\left(E^X\right)$ and $f_\# \widetilde{\omega}_X\left(D^X-E^X\right)$ are strict, where $\widetilde{\omega}\left(E^X\right)$ and $\widetilde{\omega}_X\left(D^X-E^X\right)$ are strict $\widetilde{\mathscr{D}}_{\left(X,D^X\right)}$-modules that induced by the trivial filtration. Further, we have
\begin{align*}
\mathcal{H}^if_\#\left( \widetilde{\omega}_X\left(E^X\right)\right)&\simeq
\left(\mathcal{H}^if_+\widetilde{\omega}_{X}[*D^X]\right)_{!D^Y},\\
\mathcal{H}^if_\# \left(\widetilde{\omega}_X\left(D^X-E^X\right)\right)&\simeq
\left(\mathcal{H}^if_+\widetilde{\omega}_{X}[!D^X]\right)_{*D^Y}.
\end{align*}
\end{prop}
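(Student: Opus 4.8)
The plan is to realize the two twisted log-canonical line bundles $\widetilde{\omega}_X\left(E^X\right)$ and $\widetilde{\omega}_X\left(D^X-E^X\right)$ as \emph{mixed} $*/!$ logarithmic extensions of the trivial Hodge module $\widetilde{\omega}_X$ along the two pieces of the boundary, and then to reduce the statement to the logarithmic direct image proposition proved just above (the one computing $\mathcal{H}^if_\#\mathcal{M}_{*D^X}$ and $\mathcal{H}^if_\#\mathcal{M}_{!D^X}$). Throughout I write $D^X=E^X+f^{-1}D^Y$, where $f^{-1}D^Y=D^X-E^X$ is the part of the boundary that maps into $D^Y$ and $E^X$ is the remaining part.

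First I would establish the two identifications, as strict $\widetilde{\mathscr{D}}_{\left(X,D^X\right)}$-modules,
\begin{align*}
\widetilde{\omega}_X\left(E^X\right) &\simeq \bigl(\widetilde{\omega}_X[*D^X]\bigr)_{!f^{-1}D^Y},\\
\widetilde{\omega}_X\left(D^X-E^X\right) &\simeq \bigl(\widetilde{\omega}_X[!D^X]\bigr)_{*f^{-1}D^Y}.
\end{align*}
These express the effect of the twist: along $E^X$ a $*$-extension ($\mathbf{V}_{\mathbf 0}$, which allows a logarithmic pole and hence the twist by $\mathcal{O}\left(E^X\right)$) together with along $f^{-1}D^Y$ a $!$-extension ($\mathbf{V}_{<\mathbf 0}$, no twist) produce exactly $\omega_X\otimes\mathcal{O}\left(E^X\right)$ at the level of the trivial-filtration $\mathcal{O}$-module, and the second line is the symmetric statement with the roles of $E^X$ and $f^{-1}D^Y$ exchanged. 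To prove them I would use Proposition \ref{P:log rep} (the logarithmic comparison) together with the explicit $V$-filtration of the trivial Hodge module along a normal crossing divisor; the key simplification is the observation recalled before Proposition \ref{P:log rep} that $\mathbf{V}^{f^{-1}D^Y}_{<\mathbf 0}$ depends only on the restriction away from $f^{-1}D^Y$. Hence the full localization $\widetilde{\omega}_X[*D^X]$ may be used to encode the $*$-condition along $E^X$ (the only boundary component away from $f^{-1}D^Y$), while the subsequent $\mathbf{V}_{<\mathbf 0}$ along $f^{-1}D^Y$ imposes the $!$-condition there.

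With these identifications in hand, the proposition follows from the preceding logarithmic direct image result. I would apply it with source module $\mathcal{M}=\widetilde{\omega}_X[*D^X]$ (respectively $\widetilde{\omega}_X[!D^X]$), which is of normal crossing type with respect to $D^X$, taking the extension divisor to be $f^{-1}D^Y$, so that the hypothesis $D^X=f^{-1}D^Y$ of that proposition is met with $D^X$ itself playing the role of the larger normal crossing divisor $D'$. The cited proposition then yields at once the strictness of $f_\#$ on $\mathcal{M}_{!f^{-1}D^Y}\simeq\widetilde{\omega}_X\left(E^X\right)$ and on $\mathcal{M}_{*f^{-1}D^Y}\simeq\widetilde{\omega}_X\left(D^X-E^X\right)$, together with the identities
\begin{align*}
\mathcal{H}^if_\#\bigl(\widetilde{\omega}_X\left(E^X\right)\bigr) &\simeq \bigl(\mathcal{H}^if_+\widetilde{\omega}_X[*D^X]\bigr)_{!D^Y},\\
\mathcal{H}^if_\#\bigl(\widetilde{\omega}_X\left(D^X-E^X\right)\bigr) &\simeq \bigl(\mathcal{H}^if_+\widetilde{\omega}_X[!D^X]\bigr)_{*D^Y},
\end{align*}
which is the assertion.

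The main obstacle is the first step, the mixed-extension identifications. The delicate point is to check that imposing the $!$-condition ($\mathbf{V}_{<\mathbf 0}$) along $f^{-1}D^Y$ on the module already localized along all of $D^X$ does not worsen the behaviour along $E^X$, that is, that it yields precisely the simple twist $\mathcal{O}\left(E^X\right)$ rather than higher-order poles, and dually that the trivial filtration on the line bundles matches the relevant graded piece of the Hodge filtration on the mixed extension. This requires the compatibility of the $V$-filtrations taken along the different components of the normal crossing divisor $D^X$, which is exactly where the normal-crossing-type hypothesis is used; granting it, the computation is local and reduces to the one-variable case handled by Proposition \ref{P:log rep}.
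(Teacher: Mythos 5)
Your overall strategy is the paper's: realize the twisted log-canonical bundle as a mixed $V$-filtration extension of the localized trivial Hodge module, and then invoke the preceding direct-image proposition with $\mathcal{M}=\widetilde{\omega}_X[*D^X]$ (resp.\ $\widetilde{\omega}_X[!D^X]$) and $D'=D^X$. However, your first step is not correct as stated, and without repairing it the reduction does not typecheck. The claimed isomorphism $\widetilde{\omega}_X\left(E^X\right)\simeq\left(\widetilde{\omega}_X[*D^X]\right)_{!f^{-1}D^Y}$ ``as strict $\widetilde{\mathscr{D}}_{\left(X,D^X\right)}$-modules'' is false: by the paper's definitions the right-hand side equals $\mathbf{V}^{f^{-1}D^Y}_{<\mathbf{0}}\widetilde{\omega}_X[*D^X]$, a $\widetilde{\mathscr{D}}_{\left(X,f^{-1}D^Y\right)}$-module whose underlying $\mathscr{O}$-module still contains poles of arbitrary order along $E^X$; it is not the line bundle $\omega_X\left(E^X\right)$. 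The correct on-the-nose identity is $\widetilde{\omega}_X\left(E^X\right)=\mathbf{V}^{E^X}_{\mathbf{0}}\mathbf{V}^{f^{-1}D^Y}_{<\mathbf{0}}\widetilde{\omega}_X[*D^X]$, and the object $\left(\widetilde{\omega}_X[*D^X]\right)_{!f^{-1}D^Y}$ is recovered from it only after inducing up, namely $\widetilde{\omega}_X\left(E^X\right)\otimes^{\mathbf{L}}_{\widetilde{\mathscr{D}}_{\left(X,D^X\right)}}\widetilde{\mathscr{D}}_{\left(X,f^{-1}D^Y\right)}\simeq\left(\widetilde{\omega}_X[*D^X]\right)_{!f^{-1}D^Y}$, which is where Proposition \ref{P:log rep} (applied inductively, one boundary component at a time) actually enters. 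This is exactly the ``delicate point'' you flag at the end, but it is not a mere compatibility of $V$-filtrations: it changes the ring over which the module lives, and it is the content of the first half of the proof rather than a technical check.

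The paper makes this precise by factoring $f$ as $f'\circ id$, where $id:\left(X,D^X\right)\to\left(X,f^{-1}D^Y\right)$ and $f':\left(X,f^{-1}D^Y\right)\to\left(Y,D^Y\right)$, computing $id_\#\widetilde{\omega}_X\left(E^X\right)$ via the displayed derived tensor product to obtain $\left(\widetilde{\omega}_X[*D^X]\right)_{!f^{-1}D^Y}$, and only then applying the preceding proposition to $f'$. You need this factorization explicitly: the $f_\#$ in the statement is the direct image for the log pair $\left(X,D^X\right)$, whereas the proposition you cite applies to morphisms out of $\left(X,f^{-1}D^Y\right)$, and the two are glued together precisely by $id_\#$. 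Once that step is inserted, the remainder of your argument coincides with the paper's.
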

\begin{proof}
We just show the first identity here, the second one follows similarly.

Decompose $f:\left(X,D^X\right)\to \left(Y,D^Y\right)$ into two morphisms of log pairs: $id:\left(X,D^X\right)\to \left(X, f^{-1}D^Y\right)$, and $f':\left(X, f^{-1}D^Y\right)\to \left(Y,D^Y\right)$. 
Now, by the definition of push forward functor, 
$$id_\# \widetilde{\omega}_X\left(E^X\right)\simeq \widetilde{\omega}_X\left(E^X\right)\otimes^{\mathbf{L}}_{\widetilde{\mathscr{D}}_{\left(X,D^X\right)}}\widetilde{\mathscr{D}}_{\left(X,f^{-1}D^Y\right)}.$$
Note that 
$$\widetilde{\omega}_X\left(E^X\right)=\mathbf{V}^{E^X}_\mathbf{0}\mathbf{V}^{f^{-1}D^Y}_{<\mathbf{0}}\widetilde{\omega}_X[*D^X].$$
Apply the comparison theorem with normal crossing boundary divisor in \cite{W17a} inductively, we have
\begin{align*}
&\mathbf{V}^{E^X}_\mathbf{0}\mathbf{V}^{f^{-1}D^Y}_{<\mathbf{0}}\widetilde{\omega}_X[*D^X]\otimes^{\mathbf{L}}_{\widetilde{\mathscr{D}}_{\left(X,D^X\right)}}\widetilde{\mathscr{D}}_{\left(X,f^{-1}D^Y\right)},\\
\simeq& \mathbf{V}^{f^{-1}D^X}_\mathbf{<0}\widetilde{\omega}_X[*D^X]\\
=& \widetilde{\omega}_X[*D^X]_{!f^{-1}D^Y}.
\end{align*}
Hence we are left to compute 
$f'_\# \widetilde{\omega}_X[*D^X]_{!f^{-1}D^Y},$ but this follows directly from the previous proposition.
\end{proof}
The following proposition can be viewed as a log version of Laumon's formula.
\begin{prop}[\cite{W17a}]\label{P:laumon}
Given $f:\left(X,D^X\right)\to \left(Y,D^Y\right)$ as above, let $\mathcal{M}$ be a strict $\widetilde{\mathscr{D}}_{\left(X,D^X\right)}$-module. Assume that $f_\#\mathcal{M}$ is strict. Then we have
$$\mathcal{H}^i f_{\widetilde{\#}}\text{Gr}^F \mathcal{M}:=R^i f_*\left(\text{Gr}^F \mathcal{M}\otimes^{\mathbf{L}}_{\mathfrak{A}_{\left(X,D^X\right)}}f^*\mathfrak{A}_{\left(Y,D^Y\right)}\right)=\text{Gr}^F \mathcal{H}^i f_\# \mathcal{M}.$$
\end{prop}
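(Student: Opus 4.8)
The plan is to reduce the statement to the two fundamental cases of the decomposition $f = f' \circ \mathrm{id}$ used throughout this section, namely a log-modification $\mathrm{id}:(X,D^X)\to (X,f^{-1}D^Y)$ followed by a genuine projective morphism $f':(X,f^{-1}D^Y)\to (Y,D^Y)$ with $D^X = f'^{-1}D^Y$, and then invoke the strictness hypothesis to commute the formation of $\mathrm{Gr}^F$ past the direct image. The key observation is that Laumon's original formula computes the associated graded of a filtered $\mathscr{D}$-module direct image in terms of the Rees-algebra-linear (equivalently $\mathfrak{A}$-linear) derived pushforward of $\mathrm{Gr}^F$ along the cotangent construction; here the logarithmic cotangent sheaf $\Omega^1_X(\log D^X)$ replaces the ordinary one, so the correct symbol algebras are $\mathfrak{A}_{(X,D^X)} = \mathrm{Sym}\, T_X(-\log D^X)$ and $\mathfrak{A}_{(Y,D^Y)}$. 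First I would set up the graded Rees picture: write $\widetilde{\mathscr{D}}_{(X,D^X)}$ as the Rees algebra of the filtered log-$\mathscr{D}$-module sheaf, so that $\mathrm{Gr}^F$ of a strict module is a genuine coherent sheaf on the log-cotangent bundle $T^*_{(X,D^X)}$, and express $f_\#$ in the standard transfer-bimodule form $f_\# \mathcal{M} = Rf_* (\mathcal{M} \otimes^{\mathbf{L}}_{\widetilde{\mathscr{D}}_{(X,D^X)}} \widetilde{\mathscr{D}}_{(X,D^X)\to (Y,D^Y)})$.

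The heart of the argument is the strictness input. Because $f_\#\mathcal{M}$ is assumed strict, the filtration on each cohomology sheaf $\mathcal{H}^i f_\# \mathcal{M}$ behaves well under base change to the Rees parameter, and in particular the canonical map
\begin{equation*}
\mathrm{Gr}^F \mathcal{H}^i f_\# \mathcal{M} \longrightarrow \mathcal{H}^i \bigl(\mathrm{Gr}^F f_\# \mathcal{M}\bigr)
\end{equation*}
is an isomorphism: strictness is exactly the condition that specializing the Rees parameter to $0$ commutes with taking the $i$-th cohomology of the pushforward complex. Concretely, I would argue that $\mathrm{Gr}^F$ of the transfer bimodule $\widetilde{\mathscr{D}}_{(X,D^X)\to (Y,D^Y)}$ is the log-cotangent correspondence $f^*\mathfrak{A}_{(Y,D^Y)}$ as a module over $\mathfrak{A}_{(X,D^X)}$, so that
\begin{equation*}
\mathrm{Gr}^F\bigl(\mathcal{M}\otimes^{\mathbf{L}}_{\widetilde{\mathscr{D}}_{(X,D^X)}}\widetilde{\mathscr{D}}_{(X,D^X)\to(Y,D^Y)}\bigr)\simeq \mathrm{Gr}^F\mathcal{M}\otimes^{\mathbf{L}}_{\mathfrak{A}_{(X,D^X)}}f^*\mathfrak{A}_{(Y,D^Y)},
\end{equation*}
after which applying $Rf_*$ and using the flatness of $\mathrm{Gr}^F$-formation under the strictness hypothesis yields precisely $R^if_*$ of the right-hand side, which is the definition of $\mathcal{H}^i f_{\widetilde{\#}}\mathrm{Gr}^F\mathcal{M}$.

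The main obstacle, and the place where the logarithmic setting genuinely differs from Laumon's classical formula, is verifying that $\mathrm{Gr}^F$ of the log-transfer bimodule is correctly identified with $f^*\mathfrak{A}_{(Y,D^Y)}$ as a sheaf of $\mathfrak{A}_{(X,D^X)}$-modules, and that the derived tensor product commutes with $\mathrm{Gr}^F$ in the log setting. I expect this to require the comparison theorems of \cite{W17a} recalled above (Proposition \ref{P:log rep} and its consequences), together with a local computation in the normal-crossing coordinates $x_1,\dots,x_m$ on $(Y,D^Y)$ and their pullbacks, checking that the log-de Rham differentials and their symbols match under $f$. Once the bimodule computation is in place, the interchange of $\mathrm{Gr}^F$ with $Rf_*$ is purely formal given strictness, so the entire content is concentrated in this symbol-level identification; I would carry it out first in the smooth-$D^Y$ case assumed in the statement, where the $V$-filtration along the single boundary component simplifies the analysis.
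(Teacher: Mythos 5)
This proposition is quoted from \cite{W17a}; the present paper gives no proof of it, so there is nothing internal to compare your argument against. On its own merits, your outline is the standard Rees-module proof of Laumon's formula transplanted to the logarithmic setting, and it is essentially sound: writing $\widetilde{\mathscr{D}}_{(X,D^X)}$ as a graded $\mathbb{C}[z]$-algebra, strictness of $\mathcal{M}$ identifies $\mathcal{M}\otimes^{\mathbf{L}}_{\mathbb{C}[z]}\mathbb{C}[z]/(z)$ with $\mathrm{Gr}^F\mathcal{M}$ in degree zero, the transfer bimodule $\mathscr{O}_X\otimes_{f^{-1}\mathscr{O}_Y}f^{-1}\widetilde{\mathscr{D}}_{(Y,D^Y)}$ is $\mathbb{C}[z]$-flat with associated graded $f^*\mathfrak{A}_{(Y,D^Y)}$, and strictness of $f_\#\mathcal{M}$ kills the $\mathrm{Tor}_1$ term in the universal-coefficient sequence relating $\mathrm{Gr}^F\mathcal{H}^i$ to $\mathcal{H}^i\,\mathrm{Gr}^F$; these three facts assemble into the stated identity. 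Two points in your write-up are off target, though neither is fatal. First, the opening reduction of $f$ to $\mathrm{id}:(X,D^X)\to(X,f^{-1}D^Y)$ followed by $f':(X,f^{-1}D^Y)\to(Y,D^Y)$ is the decomposition used for Proposition \ref{P:direct}, not for this statement; Laumon's formula holds for an arbitrary morphism of log pairs once the transfer-bimodule formalism is in place, and your subsequent argument never actually uses the decomposition. Second, you locate the ``main obstacle'' in identifying the graded transfer bimodule and suggest this needs the comparison results of Proposition \ref{P:log rep}; it does not. Those results compare $\widetilde{\mathscr{D}}_{(X,D)}$-modules with honest $\widetilde{\mathscr{D}}_X$-modules (localization versus log lattices), which is orthogonal to the present computation. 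The identification $\mathrm{Gr}^F\widetilde{\mathscr{D}}_{(X,D^X)\to(Y,D^Y)}\simeq f^*\mathfrak{A}_{(Y,D^Y)}$ is a direct local calculation with log coordinates, and the genuine content of the proposition is concentrated entirely in the two strictness hypotheses, exactly as in the non-logarithmic case.
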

For the dual functor, we have the following.
\begin{prop}[\cite{W17a}]\label{P:dual}
Given a mixed Hodge module $\mathcal{M}$ on a log smooth pair $\left(X,H\right)$ with $H$ being smooth, we have that 
\begin{align*}
\mathbf{D}_{\left(X,H\right)} \mathcal{M}_{*H}&=\mathcal{M}'_{!H},\\
\mathbf{D}_{\left(X,H\right)} \mathcal{M}_{!H}&=\mathcal{M}'_{*H},
\end{align*}
where $\mathcal{M}'=\mathbf{D}_X\mathcal{M}$, the dual mixed Hodge module.
\end{prop}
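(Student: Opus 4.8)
The plan is to reduce the statement to the already-established duality theory for mixed Hodge modules on the smooth variety $X$ (forgetting the boundary), together with the comparison results of Proposition \ref{P:log rep}. Since $H$ is smooth, the $V$-filtration $V^H_\bullet$ along $H$ is a single Kashiwara--Malgrange filtration, so the two strict $\widetilde{\mathscr{D}}_{(X,H)}$-modules $\mathcal{M}_{*H}=V^H_0\mathcal{M}[*H]$ and $\mathcal{M}_{!H}=V^H_{<0}\mathcal{M}[!H]$ are each governed by a single real parameter. The first step is to record the compatibility of the ordinary duality functor $\mathbf{D}_X$ with localization and dual-localization along $H$, namely
$$\mathbf{D}_X\bigl(\mathcal{M}[*H]\bigr)=\bigl(\mathbf{D}_X\mathcal{M}\bigr)[!H]=\mathcal{M}'[!H],$$
and symmetrically $\mathbf{D}_X(\mathcal{M}[!H])=\mathcal{M}'[*H]$; this is standard (\cite{SS16}, \cite{Sa90}) and interchanges the role of $*$ and $!$ along the boundary.

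The core computation is to identify how $\mathbf{D}_{(X,H)}$ on the log side relates to $\mathbf{D}_X$ on the ambient side. I would use Proposition \ref{P:log rep} to write
$$\mathcal{M}_{*H}\otimes^{\mathbf{L}}_{\widetilde{\mathscr{D}}_{(X,H)}}\widetilde{\mathscr{D}}_X\simeq\mathcal{M}[*H],$$
so that applying $\mathbf{D}_X$ and using the displayed localization-duality turns the right-hand side into $\mathcal{M}'[!H]$. On the other hand, duality on the log-cotangent side is defined via $\widetilde{\mathscr{D}}_{(X,H)}$, and the log-to-ordinary base change for the dualizing functor must be checked to be compatible with the extension of scalars $\widetilde{\mathscr{D}}_{(X,H)}\hookrightarrow\widetilde{\mathscr{D}}_X$. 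The key point is that on the log side the canonical sheaf is twisted by $\mathcal{O}_X(H)$, which is precisely what converts the $V^H_0$ (the $*$-extension condition) into the $V^H_{<0}$ (the $!$-extension condition) after dualizing: concretely, the residue along $H$ contributes a shift by $1$ in the $V$-index, so that $\mathbf{D}_{(X,H)}$ sends the eigenvalue-$0$ part to the eigenvalue-$(<0)$ part. Combining these gives $\mathbf{D}_{(X,H)}\mathcal{M}_{*H}\simeq(\mathbf{D}_X\mathcal{M})_{!H}=\mathcal{M}'_{!H}$, and the second identity follows by applying the first to $\mathcal{M}'$ and using $\mathbf{D}_X^2=\mathrm{id}$.

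I expect the main obstacle to be the precise bookkeeping of the $V$-filtration indices under duality, i.e.\ verifying that the log dualizing complex on $(X,H)$ shifts the Kashiwara--Malgrange index by exactly the amount needed to exchange $V^H_0$ with $V^H_{<0}$. This is where the smoothness hypothesis on $H$ is essential: it guarantees a single clean $V$-filtration with no cross-terms between boundary components, so the residue map and the duality pairing can be analyzed one strand at a time rather than through the full multi-index $\mathbf{V}^{D}$-machinery. Once that index shift is pinned down, the strictness of all the functors involved (ensured by the fact that $\mathcal{M}$ underlies a polarizable mixed Hodge module and that $\mathcal{M}_{*H}$, $\mathcal{M}_{!H}$ are the canonical filtered extensions) makes the remaining verification formal.
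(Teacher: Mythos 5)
A preliminary remark: the paper itself gives no proof of Proposition \ref{P:dual}; it is stated with the attribution \cite{W17a} and imported as a black box (it is used in the proof of Claim \ref{C:main claim}). So there is no in-paper argument to measure your proposal against, and I can only assess it on its own terms.

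On those terms, your outline identifies the right ingredients but has a genuine gap at its center. The reduction you propose --- apply Proposition \ref{P:log rep} to write $\mathcal{M}_{*H}\otimes^{\mathbf{L}}_{\widetilde{\mathscr{D}}_{(X,H)}}\widetilde{\mathscr{D}}_X\simeq\mathcal{M}[*H]$, then use the standard exchange $\mathbf{D}_X(\mathcal{M}[*H])=\mathcal{M}'[!H]$ --- cannot by itself determine $\mathbf{D}_{(X,H)}\mathcal{M}_{*H}$. The extension-of-scalars functor is not injective on objects: many distinct $\widetilde{\mathscr{D}}_{(X,H)}$-lattices inside a given $\widetilde{\mathscr{D}}_X$-module induce the same ambient module, so even granting full compatibility of $\mathbf{D}_{(X,H)}$ with $\mathbf{D}_X$ under base change, you only learn that $\mathbf{D}_{(X,H)}\mathcal{M}_{*H}$ is \emph{some} lattice inside $\mathcal{M}'[!H]$; identifying it as the specific lattice $V^H_{<0}\mathcal{M}'[!H]=\mathcal{M}'_{!H}$ is precisely the content of the proposition. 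You flag this as "the main obstacle" (the shift of the Kashiwara--Malgrange index coming from the twist $\omega_X\mapsto\omega_X(H)$ in the log dualizing complex), but you then assert the needed index exchange rather than prove it. Carrying it out requires the Saito-type compatibility of duality with the $V$-filtration along $H$: strictness and the Cohen--Macaulay property of $\mathrm{Gr}^F$ over the $V$-filtration steps, plus the local duality computation showing that the pairing twisted by $\mathcal{O}_X(H)$ interchanges the $\geq 0$ and $<0$ parts of $V^H$. Without that computation the argument is a restatement of the claim. Two smaller points: the derivation of the second identity from the first via $\mathbf{D}_X^2=\mathrm{id}$ also tacitly uses biduality for $\mathbf{D}_{(X,H)}$ on these lattices, which again rests on the Cohen--Macaulay property; and since $\mathbf{V}^H_{<0}\mathcal{M}[!H]=\mathbf{V}^H_{<0}\mathcal{M}[*H]$ (as the paper notes), you should be explicit about which ambient module each lattice is being compared inside when you invoke the comparison isomorphisms.
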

The following proposition is a log version of \cite[Theorem 2.3]{PS13}.
\begin{prop}[\cite{W17a}]\label{P:dual gr}
Under the same condition as in the previous proposition, we denote by $\mathcal{G}(\mathcal{M}_{*H})$ and $\mathcal{G}(\mathcal{M}_{!H})$ the associated graded coherent $\mathscr{O}_{T^*_{(X,H)}}$-module of $\mathcal{M}_{*H}$ and $\mathcal{M}_{!H}$ respectively, supported on $T^*_{(X,H)}$, the space of log-cotangent bundle of $(X,H)$. Then we have 
\begin{align*}
 \mathcal{G}\left(\mathcal{M}'_{*H}\right)\simeq& \left(-1\right)^*_{T^*_{\left(X,H\right)}}
\mathbf{R}\mathcal{H}om_{\mathscr{O}_{T^*_{\left(X,H\right)}}}
\left(\mathcal{G}\left(\mathcal{M}_{!H}\right), p_X^*\omega_X[d_X] \otimes \mathscr{O}_{T^*_{\left(X,H\right)}}\right),\\
\mathcal{G}\left(\mathcal{M}'_{!H}\right)\simeq& \left(-1\right)^*_{T^*_{\left(X,H\right)}}
\mathbf{R}\mathcal{H}om_{\mathscr{O}_{T^*_{\left(X,H\right)}}}
\left(\mathcal{G}\left(\mathcal{M}_{*H}\right), p_X^*\omega_X[d_X] \otimes \mathscr{O}_{T^*_{\left(X,H\right)}}\right).
\end{align*} 
\end{prop}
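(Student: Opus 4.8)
The plan is to deduce this proposition from the $\widetilde{\mathscr{D}}$-module duality already recorded in Proposition \ref{P:dual}, by showing that the spectral functor $\mathcal{G}(-)$ intertwines the logarithmic duality $\mathbf{D}_{(X,H)}$ with Grothendieck duality on the log-cotangent space $T^*_{(X,H)}$. This is the logarithmic counterpart of \cite[Theorem 2.3]{PS13}, and I would run the argument parallel to theirs, replacing $T^*X$ throughout by $T^*_{(X,H)}$ and the ring $\mathscr{D}_X$ by the sheaf of logarithmic differential operators, whose associated graded is $\text{Gr}^F\widetilde{\mathscr{D}}_{(X,H)}=\text{Sym}\,T_X(-\log H)$, i.e. the pushforward to $X$ of the structure sheaf of $T^*_{(X,H)}$.

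First I would reduce both displayed identities to a single ``$\text{Gr}$ commutes with duality'' formula. By Proposition \ref{P:dual} we have $\mathbf{D}_{(X,H)}\mathcal{M}_{!H}=\mathcal{M}'_{*H}$ and $\mathbf{D}_{(X,H)}\mathcal{M}_{*H}=\mathcal{M}'_{!H}$, so it suffices to establish, for $\mathcal{N}$ equal to $\mathcal{M}_{!H}$ and to $\mathcal{M}_{*H}$, the relation
$$\mathcal{G}(\mathbf{D}_{(X,H)}\mathcal{N}) \simeq (-1)^*_{T^*_{(X,H)}}\, \mathbf{R}\mathcal{H}om_{\mathscr{O}_{T^*_{(X,H)}}}\bigl(\mathcal{G}(\mathcal{N}),\, p_X^*\omega_X[d_X] \otimes \mathscr{O}_{T^*_{(X,H)}}\bigr).$$
Substituting the two cases of Proposition \ref{P:dual} into the left-hand side then yields exactly the two identities of the proposition. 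The hypotheses of Proposition \ref{P:dual}, together with the fact that duality of mixed Hodge modules is strict, guarantee the strictness of $\mathcal{N}$ and of $\mathbf{D}_{(X,H)}\mathcal{N}$ needed below.

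The heart of the matter is the resolution step. The functor $\mathbf{D}_{(X,H)}$ is computed as an $\mathbf{R}\mathcal{H}om$ over $\widetilde{\mathscr{D}}_{(X,H)}$ with an appropriate side-changing twist by $\omega_X^{-1}$ and a shift, and I would evaluate it using the logarithmic Spencer complex, a locally free resolution built from $\wedge^\bullet T_X(-\log H)\otimes_{\mathscr{O}_X}\widetilde{\mathscr{D}}_{(X,H)}$. Because the modules in play are strict, applying $\text{Gr}^F$ commutes with this $\mathbf{R}\mathcal{H}om$, and the associated graded of the log Spencer complex is precisely the Koszul resolution of $\mathscr{O}_X$ on $T^*_{(X,H)}$ cut out by the tautological section of $p_X^*T_X(-\log H)$. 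Thus the $\widetilde{\mathscr{D}}$-linear $\mathbf{R}\mathcal{H}om$ degenerates to the $\mathscr{O}_{T^*_{(X,H)}}$-linear one, producing the right-hand side; the antipodal pullback $(-1)^*_{T^*_{(X,H)}}$ enters because the transpose taking right modules back to right modules acts by $-1$ on the fiber coordinates of the cotangent space, while $p_X^*\omega_X[d_X]$ records the side-changing twist together with the cohomological shift by $d_X=\dim X$.

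The main obstacle I anticipate is bookkeeping rather than conceptual. One must check that the relative dualizing data of $p_X:T^*_{(X,H)}\to X$ combines correctly with the side-changing twist, using $\omega_{T^*_{(X,H)}/X}\simeq p_X^*\det T_X(-\log H)$, so that the dualizing object collapses to $p_X^*\omega_X[d_X]\otimes\mathscr{O}_{T^*_{(X,H)}}$ exactly as stated; and that the filtration conventions on $\mathbf{D}_{(X,H)}\mathcal{N}$ agree with those underlying Proposition \ref{P:dual}, so that no residual Tate twist survives (this is where the self-duality up to a Tate twist of the relevant Hodge module, used in Step 5 of Section 3, is implicitly reflected). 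I would also verify that only the strictness of the single module $\mathcal{N}$ and its dual is invoked, not any direct-image strictness, so that the degeneration of $\text{Gr}^F$ past $\mathbf{R}\mathcal{H}om$ is legitimate. Once these points are in place, the formula follows from the non-logarithmic computation of \cite[Theorem 2.3]{PS13} applied verbatim over $T^*_{(X,H)}$.
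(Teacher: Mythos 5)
This proposition is stated in the paper without proof: it is imported wholesale from \cite{W17a} (the paper only remarks that it is ``a log version of \cite[Theorem 2.3]{PS13}''), so there is no in-paper argument to compare yours against. That said, your reconstruction follows the template one would expect the cited proof to follow --- reduce via Proposition \ref{P:dual} to a single statement that $\mathcal{G}$ intertwines $\mathbf{D}_{(X,H)}$ with $\mathbf{R}\mathcal{H}om_{\mathscr{O}_{T^*_{(X,H)}}}(-,p_X^*\omega_X[d_X]\otimes\mathscr{O}_{T^*_{(X,H)}})$ composed with the antipode, compute the dual on the induced-module (log Spencer) resolution $\mathcal{N}\otimes\wedge^{-\bullet}\mathcal{T}_{(X,H)}\otimes\widetilde{\mathscr{D}}_{(X,H)}$, and pass to $\mathrm{Gr}^F$ --- and the outline is sound. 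Two points deserve sharper treatment than you give them. First, the commutation of $\mathrm{Gr}^F$ with the $\widetilde{\mathscr{D}}_{(X,H)}$-linear $\mathbf{R}\mathcal{H}om$ is not a consequence of strictness of ``the modules in play'' in isolation; what is needed is that the filtered dual \emph{complex} is strict and concentrated in degree zero, and this is precisely the content you must extract from Proposition \ref{P:dual} (that $\mathbf{D}_{(X,H)}\mathcal{M}_{!H}$ is again of the form $\mathcal{M}'_{*H}$ for a mixed Hodge module $\mathcal{M}'$), not merely the identification of the answer. Second, the normalization $p_X^*\omega_X[d_X]$ is a genuine issue in the log setting, since $\det T_X(-\log H)=\omega_X(H)^{-1}$ and both $\omega_X$ and $\omega_X(H)$ carry natural right $\widetilde{\mathscr{D}}_{(X,H)}$-structures, so a twist by $\mathscr{O}_X(\pm H)$ could a priori survive; it can be pinned down either by the self-duality of the Koszul complex of $p_X^*T_X(-\log H)$ (which forces the twist $\det(p_X^*T_X(-\log H))^{-1}[-d_X]=p_X^*\omega_X(H)[-d_X]$ against the conormal determinant $\omega_X(H)$ of the zero section) or simply by testing the formula on $\mathcal{M}=\widetilde{\omega}_X[*H]$, where $\mathcal{M}_{*H}=\omega_X(H)$ and $\mathcal{M}'_{!H}=\omega_X$ live on the zero section; both checks confirm $p_X^*\omega_X[d_X]$ as stated. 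With those two verifications made explicit, your argument is a correct proof.
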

Now we are ready to show some vanishing results that will be used to prove the main claim. The main vanishing result is the following theorem. Note that by taking $D=0$, it is Kodaira-Saito vanishing .
\begin{theorem}\label{T:more general saito V}
Fix a mixed Hodge module $\mathcal{M}$, a possibly un-reduced effective divisor $D$ and a semi-ample line bundle $\mathcal{L}$ on a smooth projective variety $X$. Assume further that $\mathscr{O}_X(D)\otimes \mathcal{L}$ is an ample line bundle, then we have the following vanishing:
\begin{align*}
\mathbb{H}^i\left(\text{Gr}^F_p \textsl{Sp}_X \mathcal{M}[*D]\otimes \mathcal{L}\right)=&0, \text{ for } i>0,\\
\mathbb{H}^i\left(\text{Gr}^F_p \textsl{Sp}_X \mathcal{M}[!D]\otimes \mathcal{L}^{-1}\right)=&0, \text{ for } i<0.
\end{align*}
\end{theorem}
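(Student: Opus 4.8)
The plan is to deduce both vanishings from ordinary Kodaira--Saito vanishing --- the $D=0$ statement, applied more generally to a \emph{localized} mixed Hodge module with an \emph{ample} twist --- by means of an Esnault--Viehweg cyclic covering, after first reducing to a single statement by duality. For the reduction I would show the two lines are Serre-dual to one another, so that only the first needs proof. Since $\mathbf{D}_X(\mathcal{M}[*D])\simeq(\mathbf{D}_X\mathcal{M})[!D]$, Proposition \ref{P:dual gr} (together with Proposition \ref{P:dual}, and Proposition \ref{P:log rep} to pass to the smooth-boundary situation) identifies $\text{Gr}^F_\bullet\textsl{Sp}_X(\mathcal{M}[!D])$ with the $\mathscr{O}_X$-dual of $\text{Gr}^F_\bullet\textsl{Sp}_X((\mathbf{D}_X\mathcal{M})[*D])$, up to the sign involution on the log-cotangent space and a twist by $\omega_X[d_X]$. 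Feeding this through Serre duality on $X$ converts ``$\mathbb{H}^i(-\otimes\mathcal{L})=0$ for $i>0$'' for the $*$-extension of $\mathbf{D}_X\mathcal{M}$ into ``$\mathbb{H}^i(-\otimes\mathcal{L}^{-1})=0$ for $i<0$'' for the $!$-extension of $\mathcal{M}$. Hence it suffices to prove the first line for an arbitrary $\mathcal{M}$.

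Next I would set $\mathcal{A}=\mathscr{O}_X(D)\otimes\mathcal{L}$, which is ample by hypothesis, and use it to manufacture genuine positivity on a finite cover. For $N\gg0$ pick a general smooth divisor $H\in|\mathcal{A}^{N}|$ transverse to the support of $D$, and let $\pi:X'\to X$ be the associated degree-$N$ cyclic cover; then $X'$ is smooth projective, $\pi$ is finite and flat, and $\pi_*\mathscr{O}_{X'}=\bigoplus_{j=0}^{N-1}\mathcal{A}^{-j}$. Let $\mathcal{M}'=\pi^{+}\mathcal{M}$ be the flat pullback mixed Hodge module. The two compatibilities I need are: first, that localization commutes with this pullback, $(\pi^{+}\mathcal{M})[*\pi^{-1}D]\simeq\pi^{+}(\mathcal{M}[*D])$, which holds away from $H$ because there $\pi$ is étale and along $H$ because $H$ was chosen transverse to $D$; and second, that the associated graded Spencer complex commutes with $\pi^{*}$ (flatness, via the pullback formula underlying Proposition \ref{P:laumon}), so that
\[
\pi_*\bigl(\text{Gr}^F_p\textsl{Sp}_{X'}\mathcal{M}'[*\pi^{-1}D]\bigr)\simeq\bigoplus_{j=0}^{N-1}\text{Gr}^F_p\textsl{Sp}_X\mathcal{M}[*D]\otimes\mathcal{A}^{-j}.
\]

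For the final summand argument, take the twist $\mathcal{T}=\pi^{*}(\mathcal{L}\otimes\mathcal{A}^{N-1})$ on $X'$; it is the pullback by the finite map $\pi$ of the ample bundle $\mathcal{L}\otimes\mathcal{A}^{N-1}$ (ample since $\mathcal{A}$ is ample and $\mathcal{L}$ is semi-ample), hence ample. By the projection formula and the displayed isomorphism,
\[
\pi_*\bigl(\text{Gr}^F_p\textsl{Sp}_{X'}\mathcal{M}'[*\pi^{-1}D]\otimes\mathcal{T}\bigr)\simeq\bigoplus_{i=0}^{N-1}\text{Gr}^F_p\textsl{Sp}_X\mathcal{M}[*D]\otimes\mathcal{L}\otimes\mathcal{A}^{i}.
\]
Because $\pi$ is finite, the left-hand side has the same hypercohomology as the object upstairs, which vanishes in positive degrees by Kodaira--Saito vanishing applied on $X'$ to $\mathcal{M}'[*\pi^{-1}D]$ with the ample bundle $\mathcal{T}$. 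On the right, every summand with $i\ge1$ already has vanishing positive-degree hypercohomology, by Kodaira--Saito vanishing applied on $X$ to $\mathcal{M}[*D]$ with the ample bundle $\mathcal{L}\otimes\mathcal{A}^{i}$. Subtracting these forces the remaining $i=0$ summand, namely $\text{Gr}^F_p\textsl{Sp}_X\mathcal{M}[*D]\otimes\mathcal{L}$, to have no positive-degree hypercohomology, which is exactly the first asserted vanishing.

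The hard part will be the compatibility $(\pi^{+}\mathcal{M})[*\pi^{-1}D]\simeq\pi^{+}(\mathcal{M}[*D])$ at the level of the Hodge filtration, i.e. controlling the $V$-filtrations along $\pi^{-1}D$ under the ramified cover; this is where the transversality of the general $H$ to $D$ is essential and where Propositions \ref{P:log rep} and \ref{P:laumon} carry the passage between $\mathcal{M}[*D]$, its logarithmic representative, and its pullback. By contrast the positivity bookkeeping --- that $\mathcal{L}\otimes\mathcal{A}^{i}$ is ample for $i\ge1$ and that $\mathcal{T}$ is ample --- is routine once $\mathcal{L}$ is semi-ample and $\mathcal{A}$ ample, and it is precisely here that the hypothesis ``$\mathcal{L}$ semi-ample, $\mathcal{A}=\mathcal{L}(D)$ ample'' (rather than ``$\mathcal{L}$ ample'') is used: the covering trades the mere nefness of $\mathcal{L}$ in the $i=0$ term against the ampleness of the bundled total.
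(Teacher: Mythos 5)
Your duality reduction to the $*$-statement is fine (the paper makes the symmetric reduction to the $!$-statement), but the covering step contains a genuine error that breaks the argument. The displayed decomposition
$\pi_*\left(\text{Gr}^F_p\textsl{Sp}_{X'}\mathcal{M}'[*\pi^{-1}D]\right)\simeq\bigoplus_{j}\text{Gr}^F_p\textsl{Sp}_X\mathcal{M}[*D]\otimes\mathcal{A}^{-j}$
is false: the Hodge filtration of a pullback under a ramified cover is shifted along the branch locus, so the non-invariant eigen-summands involve the \emph{further localization} $\left(\mathcal{M}[*D]\right)[*H]$ along the branch divisor $H$, not $\mathcal{M}[*D]$ itself. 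This is exactly the content of Saito's formula (2.33.2), which the paper quotes as equation (\ref{E:relation cyclic covering}); you can already see the failure for $\mathcal{M}=\widetilde{\omega}_X$ and $D=0$, where $\pi_*\omega_{X'}=\bigoplus_j\omega_X\otimes\mathcal{A}^{j}=\bigoplus_{j}\omega_X\left(H\right)\otimes\mathcal{A}^{j-N}$, not $\bigoplus_j\omega_X\otimes\mathcal{A}^{-j}$. Consequently, after twisting by $\mathcal{T}$, the summand carrying the twist $\mathcal{L}$ alone is $\text{Gr}^F_p\textsl{Sp}_X\left(\mathcal{M}[*\left(D+H\right)]\right)\otimes\mathcal{L}$ --- the wrong module --- while the invariant summand, which is the one honestly equal to $\text{Gr}^F_p\textsl{Sp}_X\mathcal{M}[*D]$, carries the twist $\mathcal{L}\otimes\mathcal{A}^{N-1}$, which is already ample, so its vanishing is plain Kodaira--Saito and yields nothing new. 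The summand you need, $\text{Gr}^F_p\textsl{Sp}_X\mathcal{M}[*D]\otimes\mathcal{L}$ with no positive power of $\mathcal{A}$, simply does not occur in the pushforward, and no choice of $N$ or of branch divisor in $|\mathcal{A}^N|$ fixes this.

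The paper's proof shows what is actually needed to close this gap, and it is structurally different from your plan in two ways. First, the cyclic cover is taken along a general member $Y\in|\mathcal{L}^m|$ of the \emph{semi-ample} bundle, so that (2.33.2) produces precisely the twists $\mathcal{L}^{-i}$, $0<i<m$, appearing in the statement; the ampleness of $\mathscr{O}_X\left(D\right)\otimes\mathcal{L}$ is used not through an ample twist in a Kodaira-type vanishing upstairs, but to make $X\setminus\left(D+Y\right)$ affine so that \emph{Artin vanishing} applies to $\overline{\mathcal{M}}[!D]=\overline{\mathcal{M}}[!\left(D+Y\right)]$, with strictness converting the topological vanishing into the graded one. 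Second, this only yields the vanishing for the further-localized module $\left(\mathcal{M}[!D]\right)[*Y]$, and one must then descend to $\mathcal{M}[!D]$ via the exact sequence $0\to\mathcal{M}[!D]\to\left(\mathcal{M}[!D]\right)[*Y]\to i_*\mathcal{H}^1i^!\left(\mathcal{M}[!D]\right)\to0$ together with an induction on the dimension of the support, the third term living on the non-characteristic hypersurface $Y$. Both the Artin-vanishing input and the dimension induction are absent from your proposal, and they are precisely the devices that resolve the discrepancy between the localized and un-localized modules that your summand bookkeeping overlooks.
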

\begin{proof}
The proof is similar to the proof of Kodaira-Saito vanishing in \cite[2.33. Proposition]{Sa90}. See also \cite{Po16}.

Since both taking (dual) localization and $\text{Gr}^F_p \textsl{Sp}_X$ are exact, by a standard reduction, we only need to show the case that $\mathcal{M}$ is a pure Hodge module with strict support $Z\subset X$. If $Z\subset D$, the vanishings are trivial. Further, by Grothendieck-Serre duality and its compatibility with the dual functor in mixed Hodge modules, it suffices to show the second vanishing.

Since $\mathcal{L}^m$ is globally generated for some integer $m$, by Bertini's theorem, we can find a global section $s\in H^0\left(X, \mathcal{L}^m\right)$ which defines a smooth hypersurface $Y$ that is non-characteristic for both $\mathcal{M}$ and $\mathcal{M}[!D]$. We have a finite covering:
$$\pi: \overline{X}:=\text{Spec}_X\left(\oplus_{0\leq i<m}\mathcal{L}^{-i}\right)\to X,$$
ramified along $Y$. Denote $U=X\setminus Y$ and $j: V=X\setminus (D+Y)\to X$ be the open embedding. Note that $V$ is affine by the ampleness assumption.  Because $\pi$ is non-characteristic by the construction respect to $\mathcal{M}$, the Hodge module $\pi_*\pi^*\mathcal{M}$ is well defined. Further, we have a natural injection $\mathcal{M}\to \pi_*\pi^*\mathcal{M}.$ Denote the Hodge module
$$\overline{\mathcal{M}}=\text{Coker}\left(\mathcal{M}\to \pi_*\pi^*\mathcal{M}\right).$$
Take the dual localization at $D$, we have the following short exact sequence:
$$0\to \mathcal{M}[!D]\to \pi_*\pi^*\mathcal{M}[!D]\to \overline{\mathcal{M}}[!D]\to 0.$$
By the construction, $\overline{\mathcal{M}}$ is the unique extension of $\overline{\mathcal{M}}\big|_U$ onto $X$ with strict support, since the eigenspace with eigenvalue $0$ of the monodromy operator on $\psi_Y\text{Sp}_X\overline{\mathcal{M}}$, the nearby cycle of $\text{Sp}_X\overline{\mathcal{M}}$ respect to $Y$, is empty. In particular, we have 
$$\overline{\mathcal{M}}[!D]=\overline{\mathcal{M}}[!(D+Y)].$$
Hence by Artin's vanishing, we have 
$$\mathbb{H}^i\left(\textsl{Sp}_X\overline{\mathcal{M}}[!D]\right)=\mathbb{H}^i_c\left(\textsl{Sp}_V\overline{\mathcal{M}}\big|_V\right)=0, \text{ for }i< 0.$$
Further, by the strictness of $a_+ \mathcal{M}[!D]$, where $a: X\to \text{pt}$, $\mathbb{H}^i\left(\text{Gr}^F_p \textsl{Sp}_X\overline{\mathcal{M}}[!D]\right)$ is a sub-quotient of $\mathbb{H}^i\left(\textsl{Sp}_X\overline{\mathcal{M}}[!D]\right)$. Hence 
\begin{equation}\label{E:vanishing already got}
\mathbb{H}^i\left(\text{Gr}^F_p \textsl{Sp}_X\overline{\mathcal{M}}[!D]\right)=0,\text{ for }i< 0.
\end{equation}

On the other hand, we have the short exact sequence 
$$0\to \mathcal{M}[!D]\to \left(\mathcal{M}[!D]\right)[*Y]\to i_* \mathcal{H}^1i^!\left(\mathcal{M}[!D]\right)\to 0,$$
with $\mathcal{H}^1i^!\mathcal{M}[!D]$ being a mixed Hodge module by the non-charactericity of $i$, where $i:Y\to X$ is the close embedding. We have
$$i_*\mathcal{H}^1i^!\left(\mathcal{M}\left[!D\right]\right)= \left(i_*\mathcal{H}^1i^!\mathcal{M}\right)\left[!D\right],$$
which can be checked at the level of perverse sheaf by \cite[2.11. Proposition]{Sa90}. 
Note that $i_*\mathcal{H}^1i^!\mathcal{M}$ has support on $Z\cap Y$. By induction on dimension, now we only need to show 
\begin{equation}\label{E:vanishing to get}
\mathbb{H}^i\left(\text{Gr}^F_p\textsl{Sp}_X\left(\left(\mathcal{M}[!D]\right)[*Y]\right)\otimes \mathcal{L}^{-1}\right)=0, \text{ for }i< 0.
\end{equation}
However, by replacing $\widetilde{M}$ in \cite[(2.33.2)]{Sa90} by $\overline{\mathcal{M}}[!D]$, (see also \cite[(8.8)]{Po16},) we have 
\begin{equation}\label{E:relation cyclic covering}
{Gr}^F_p \left(\overline{\mathcal{M}}[!D]\right)\simeq \text{Gr}^F_p\left(\left(\mathcal{M}[!D]\right)[*Y]\right)\otimes \overline{\mathcal{L}},
\end{equation}
where 
$$\overline{\mathcal{L}}:=\text{Coker}\left(\mathscr{O}_X\to \pi_*\mathscr{O}_{\overline{X}}\right)\simeq \oplus_{0<i<m}\mathcal{L}^{-i}.$$
Now (\ref{E:vanishing to get}) follows by (\ref{E:vanishing already got}) and (\ref{E:relation cyclic covering}), which concludes the proof.
\end{proof}

\begin{coro}\label{coro:vanishing of gr}
Fix $\left(P^{r,d},L\right)$, $p: P^{r,d}\to A^d$ as in Theorem \ref{T:over quasi-abelian variety}. Fix a mixed Hodge module $\mathcal{M}$ on $P^{r,d}$ and an ample line bundle $\mathcal{A}$ over $A^d$. Assume that $r=0$ or $1$, or $\mathcal{M}$ is of normal crossing type respect to a divisor $L'$ that contains $L$. Then we have 
\begin{align*}
H^i\left(P^{r,d}, \text{Gr}^F_p \mathcal{M}_{*L}\otimes p^*\mathcal{A}\right)=0, \text{ for } i>0,\\
H^i\left(P^{r,d}, \text{Gr}^F_p \mathcal{M}_{!L}\otimes p^*\mathcal{A}^{-1}\right)=0, \text{ for } i<0,
\end{align*}
for any integer $k$.
\end{coro}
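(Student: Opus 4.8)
The plan is to read Corollary \ref{coro:vanishing of gr} as the specialization of the logarithmic Saito vanishing theorem (Theorem \ref{T:more general saito V}) to the fibration $p\colon P^{r,d}\to A^d$, the only real subtlety being that the twisting bundle $p^*\mathcal{A}$ is merely semi-ample on $P^{r,d}$. I would treat the $*L$ vanishing in detail and obtain the $!L$ vanishing either from the second half of Theorem \ref{T:more general saito V} or by the duality of Proposition \ref{P:dual} and Proposition \ref{P:dual gr}. Throughout, write $V=H^0(P^{r,d},\Omega^1_{P^{r,d}}(\log L))$.

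First I would point out that the hypothesis ``$r=0$ or $1$, or $\mathcal{M}$ of normal crossing type with respect to some $L'\supset L$'' is exactly what is needed to invoke the comparison theorem, Proposition \ref{P:log rep}: when $r\le 1$ the boundary $L$ is smooth (empty for $r=0$, and a disjoint union of two sections for $r=1$), while in the remaining case $\mathcal{M}$ is of normal crossing type. In either situation Proposition \ref{P:log rep} gives $\textsl{Sp}_{(P^{r,d},L)}\mathcal{M}_{*L}\simeq \textsl{Sp}_{P^{r,d}}\mathcal{M}[*L]$, hence an identification $\text{Gr}^F_p\textsl{Sp}_{(P^{r,d},L)}\mathcal{M}_{*L}\simeq \text{Gr}^F_p\textsl{Sp}_{P^{r,d}}\mathcal{M}[*L]$ of the graded logarithmic and ordinary de Rham complexes, and likewise for $\mathcal{M}_{!L}$.

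Next I would arrange the positivity needed to apply Theorem \ref{T:more general saito V} with the \emph{reduced} boundary $D=L$ and $\mathcal{L}=p^*\mathcal{A}$. The restriction of $\mathscr{O}_{P^{r,d}}(L)$ to a fibre $\mathbb{P}^r$ is $\mathscr{O}_{\mathbb{P}^r}(r+1)$, so $\mathscr{O}_{P^{r,d}}(L)$ is relatively ample over $A^d$; moreover it is nef, because the extension classes defining the quasi-abelian variety $T^{r,d}$ lie in $\mathrm{Pic}^0(A^d)$, so the normal bundles of the boundary strata are numerically trivial. Consequently $\mathscr{O}_{P^{r,d}}(D)\otimes\mathcal{L}=\mathscr{O}_{P^{r,d}}(L)\otimes p^*\mathcal{A}$ is ample, while $\mathcal{L}=p^*\mathcal{A}$ is only semi-ample; this is precisely the configuration permitted by Theorem \ref{T:more general saito V}, which therefore yields $\mathbb{H}^i(\text{Gr}^F_p\textsl{Sp}_{P^{r,d}}\mathcal{M}[*L]\otimes p^*\mathcal{A})=0$ for $i>0$ and, using $\mathcal{L}^{-1}$, $\mathbb{H}^i(\text{Gr}^F_p\textsl{Sp}_{P^{r,d}}\mathcal{M}[!L]\otimes p^*\mathcal{A}^{-1})=0$ for $i<0$. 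Combined with the previous paragraph, this is the desired vanishing for the graded logarithmic de Rham complexes of $\mathcal{M}_{*L}$ and $\mathcal{M}_{!L}$.

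Finally, to pass to the individual graded sheaf $\text{Gr}^F_p\mathcal{M}_{*L}$ appearing in the statement — exactly as one does in the non-logarithmic case \cite[Lemma 2.5]{PS14} used in Section 3 — I would induct on the Hodge index $p$, exploiting the triviality of $\Omega^1_{P^{r,d}}(\log L)$ recorded in Step 3 of the proof of Theorem \ref{T:over quasi-abelian variety}. Since that bundle is trivial of rank $m$, the graded logarithmic de Rham complex is a Koszul complex all of whose terms other than the $p$-th graded piece itself are free twists (by $\wedge^\bullet V$) of graded pieces of strictly smaller Hodge index; the base case is the vanishing of $\text{Gr}^F_p\mathcal{M}_{*L}$ for $p\ll 0$, and in the hypercohomology spectral sequence the inductive hypothesis kills every entry except the one computing $H^i(P^{r,d},\text{Gr}^F_p\mathcal{M}_{*L}\otimes p^*\mathcal{A})$, which must then vanish for $i>0$; the $!L$ case follows symmetrically, or from Proposition \ref{P:dual} and Proposition \ref{P:dual gr}. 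I expect the only genuinely delicate point to be the positivity step, namely promoting the semi-ample $p^*\mathcal{A}$ to an ample bundle by adding the boundary $L$ rather than a large multiple of it: this is where the quasi-abelian structure (nefness of $\mathscr{O}_{P^{r,d}}(L)$) is essential, and it is precisely the reason Theorem \ref{T:more general saito V} was formulated with a semi-ample $\mathcal{L}$ and a separate ample-making divisor $D$.
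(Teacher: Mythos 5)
Your proposal is correct and follows essentially the same route as the paper: identify the graded logarithmic de Rham complex of $\mathcal{M}_{*L}$ with that of $\mathcal{M}[*L]$ via Proposition \ref{P:log rep}, apply Theorem \ref{T:more general saito V} with $D=L$ and $\mathcal{L}=p^*\mathcal{A}$ using the ampleness of $\mathscr{O}_{P^{r,d}}(L)\otimes p^*\mathcal{A}$, and then extract the vanishing of each graded piece by induction on the Hodge index using the triviality of $\Omega^1_{P^{r,d}}(\log L)$. The only additions are your explicit justifications of the ampleness of $\mathscr{O}_{P^{r,d}}(L)\otimes p^*\mathcal{A}$ and of why the hypothesis on $r$ (or on normal crossing type) licenses Proposition \ref{P:log rep}, both of which the paper asserts without comment.
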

\begin{proof}
Note that if $r=0$, it is just \cite[Lemma 2.5]{PS13}. We use $P$ and $A$ to replace $P^{r,d}$ and $A^d$ to simplify the notations. We only show the vanishing of the $i>0$ case here, the other case follows similarly. Let $m=r+d=\dim P$.
We have that
\begin{align*}
\text{Gr}^F_p\textsl{Sp}_{\left(P,L\right)}\mathcal{M}_{*L}=[&\text{Gr}^F_p\mathcal{M}_{*L}\otimes \wedge^m\mathcal{T}_{\left(P,L\right)}\\
&\to \text{Gr}^F_{p+1}\mathcal{M}_{*L}\otimes \wedge^{m-1}\mathcal{T}_{\left(P,L\right)} \to ... \to \text{Gr}^F_{p+m} \mathcal{M}_{*L}],
\end{align*}
placed in cohomological degree $-m,...,0$. Note that $\mathscr{O}_P(L)\otimes p^*\mathcal{A}$ is ample and $ p^*\mathcal{A}$ is semi-ample. According to the previous vanishing theorem and Proposition \ref{P:log rep}, for $i>0$, we have that 
\begin{equation}\label{e:vanishing of gr dr}
\mathbb{H}^i\left(\text{Gr}^F_p\textsl{Sp}_{\left(P,L\right)}\mathcal{M}_{*L}\otimes p^*\mathcal{A}\right)=0.
\end{equation}
We have that $\mathcal{T}_{\left(P,L\right)}\simeq \mathscr{O}_P^{\oplus m}$.  Since $F_p\mathcal{M}_{*L}=0$, for $p\ll 0$, take $p+m$ be the smallest integer such that $\text{Gr}^F_{p+m}\mathcal{M}_{*L}$ is not trivial. Then by (\ref{e:vanishing of gr dr}), we get that $H^i\left(P, \text{Gr}^F_{p+m} \mathcal{M}_{*L}\otimes p^*\mathcal{A}\right)=0$, for $i>0$. By induction, we can conclude the vanishing of $\text{Gr}^F_p \mathcal{M}_{*L}$ for any $p$.
\end{proof}
\begin{coro}\label{coro:vanishing}
Assume that we have a morphism of log smooth pairs $f:\left(X,D\right)\to \left(P^{r,d},L\right)$, with $r=0$ or $1$ as in Theorem \ref{T:over quasi-abelian variety}, with $p:P^{r,d}\to A^d$, the natural projection. Then we have 
$$H^i\left(P^{r,d}, \mathcal{H}^k f_{\widetilde{\#}}\omega_X\left(f^{-1}L\right)\otimes p^*\mathcal{A}\right)=0,$$
 for any ample line bundle $\mathcal{A}$ over $A^d$, all $k\in\mathbb{Z}$ and $i>0$.
\end{coro}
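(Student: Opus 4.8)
The plan is to recognize the coherent sheaf $\mathcal{H}^k f_{\widetilde{\#}}\omega_X(f^{-1}L)$ as the associated graded of a $*$-extension along $L$ of a single mixed Hodge module on $P^{r,d}$, and then to read off the desired vanishing from Corollary \ref{coro:vanishing of gr}. Throughout I write $D=D^X$, take $D^Y=L$, and set $E=D-f^{-1}L$, so that $f^{-1}L=D-E$. The hypothesis $r=0$ or $1$ is used precisely to guarantee that $L$ is smooth in the sense of Section 5: for $r=1$ the two boundary components $L_1,L_2$ are disjoint, so all the propositions of that section, as well as Corollary \ref{coro:vanishing of gr}, become available.

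First I would observe that $\omega_X(f^{-1}L)=\widetilde{\omega}_X(D-E)$ carries the trivial filtration, so its $\text{Gr}^F$ is the sheaf $\omega_X(f^{-1}L)$ placed in a single degree; consequently $\mathcal{H}^k f_{\widetilde{\#}}\omega_X(f^{-1}L)$ is by definition the left-hand side of Laumon's formula (Proposition \ref{P:laumon}) applied to the strict $\widetilde{\mathscr{D}}_{(X,D)}$-module $\widetilde{\omega}_X(D-E)$. By Proposition \ref{P:direct} the direct image $f_\#\widetilde{\omega}_X(D-E)$ is strict, so Laumon's formula applies and gives
$$\mathcal{H}^k f_{\widetilde{\#}}\,\omega_X(f^{-1}L)\simeq \text{Gr}^F\,\mathcal{H}^k f_\#\widetilde{\omega}_X(D-E).$$
Next I would invoke the second isomorphism of Proposition \ref{P:direct}. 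Setting $\mathcal{M}:=\mathcal{H}^k f_+\widetilde{\omega}_X[!D]$, a mixed Hodge module on $P^{r,d}$, that proposition yields $\mathcal{H}^k f_\#\widetilde{\omega}_X(D-E)\simeq \mathcal{M}_{*L}$, whence
$$\mathcal{H}^k f_{\widetilde{\#}}\,\omega_X(f^{-1}L)\simeq \text{Gr}^F\mathcal{M}_{*L}=\bigoplus_p \text{Gr}^F_p\mathcal{M}_{*L}.$$
The problem is thereby reduced to the vanishing $H^i\big(P^{r,d},\,\text{Gr}^F_p\mathcal{M}_{*L}\otimes p^*\mathcal{A}\big)=0$ for $i>0$ and all $p$, which is exactly the $i>0$ statement of Corollary \ref{coro:vanishing of gr} applied to the mixed Hodge module $\mathcal{M}$ under the hypothesis $r=0$ or $1$. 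This closes the argument.

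I expect no genuine obstacle here; the result is a formal consequence of three ingredients already in place, namely Laumon's formula, the identification of the graded log direct image in Proposition \ref{P:direct}, and the graded Kodaira–Saito-type vanishing of Corollary \ref{coro:vanishing of gr}. The only points demanding attention are bookkeeping rather than substance: one must verify that $f_\#\widetilde{\omega}_X(D-E)$ is strict so that Laumon's formula is legitimately applicable (this is precisely the content of Proposition \ref{P:direct}), and one must confirm that the smoothness hypothesis on $L$ required by Section 5 holds, which is exactly where the reduction to $r=0$ or $1$ is needed. If anything is delicate, it is keeping the trivial-filtration identification $\text{Gr}^F\widetilde{\omega}_X(D-E)=\omega_X(f^{-1}L)$ and the grading conventions consistent across the two propositions, so that the cohomological index $k$ in $f_{\widetilde{\#}}$ matches the index on $\mathcal{H}^k f_+$.
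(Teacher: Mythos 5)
Your proposal is correct and follows exactly the paper's own argument: Proposition \ref{P:direct} and Laumon's formula (Proposition \ref{P:laumon}) identify $\mathcal{H}^k f_{\widetilde{\#}}\omega_X\left(f^{-1}L\right)$ with $\text{Gr}^F\left(\left(\mathcal{H}^k f_+\widetilde{\omega}_X[!D]\right)_{*L}\right)$, and the vanishing is then read off from Corollary \ref{coro:vanishing of gr}. The bookkeeping points you flag (strictness from Proposition \ref{P:direct}, and the $r=0$ or $1$ hypothesis ensuring $L$ is smooth in the sense of Section 5) are precisely the ones the paper relies on.
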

\begin{proof}
By Proposition \ref{P:direct} and Proposition \ref{P:laumon}, we have 
$$\mathcal{H}^k f_{\widetilde{\#}}\omega_X\left(f^{-1}L\right)
\simeq \text{Gr}^F \left(\mathcal{H}^k f_\# \widetilde{\omega}_X\left(f^{-1}L\right)\right)
\simeq \text{Gr}^F \left(\left(\mathcal{H}^k f_+\left(\widetilde{\omega}_X[!D]\right)\right)_{*L}\right).$$
Now it follows by the previous Corollary. 
\end{proof}

\section{Connection to mixed Hodge modules and the proof of the claim}
\begin{prop}\label{P:gr direct}
Let $g:\left(Y,D^Y\right)\to \left(P^{r,d},L\right)$ be a morphism between two log-smooth pairs, with $r=0$ or $1$, as in Theorem \ref{T:over quasi-abelian variety}. Denote $E^Y=D^Y-g^{-1}L$. We have
$$g_{\widetilde{\#}}\omega_Y\left(E^Y\right) \simeq \mathbf{R}g_*\left(C_{Y,D^Y,\bullet}\otimes \mathscr{O}_Y\left(E^Y-D^Y\right)\right),$$
and dually
$$g_{\widetilde{\#}}\omega_Y\left(D^Y-E^Y\right) \simeq \mathbf{R}g_*\left(C_{Y,D^Y,\bullet}\otimes \mathscr{O}_Y\left(-E^Y\right)\right),$$
where $C_{Y,D^Y,\bullet}$ is defined in the proof of Theorem \ref{T:over quasi-abelian variety}.
In particular, we have 
$$\text{Gr}^F\left(\left(\mathcal{H}^i g_+\widetilde{\omega}_{Y}[*D^Y]\right)_{!L} \right) \simeq R^ig_*\left(C_{Y,D^Y,\bullet}\otimes \mathscr{O}_Y\left(E^Y-D^Y\right)\right),$$
and 
$$\text{Gr}^F\left(\left(\mathcal{H}^i g_+\widetilde{\omega}_{Y}[!D^Y]\right)_{*L}\right) \simeq R^i g_*\left(C_{Y,D^Y,\bullet}\otimes \mathscr{O}_Y\left(-E^Y\right)\right).$$
\end{prop}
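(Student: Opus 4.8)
The plan is to identify the graded direct image $g_{\widetilde{\#}}\omega_Y\left(E^Y\right)$ with the explicit Koszul complex on the right by combining the log Laumon formula (Proposition \ref{P:laumon}), the direct-image computation (Proposition \ref{P:direct}), and a local Koszul resolution; the statement is the logarithmic analogue of \cite[Proposition 2.11]{PS13}. First I would record that the hypothesis $r=0$ or $1$ guarantees that $L$ is smooth (it is empty when $r=0$ and a disjoint union of two sections when $r=1$), so that Propositions \ref{P:direct} and \ref{P:laumon}, which require the target boundary to be smooth, genuinely apply to $g:\left(Y,D^Y\right)\to\left(P,L\right)$ with $D^Y=g^{-1}L+E^Y$.

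The heart of the argument is a sheaf-level computation. Unwinding the definition of $g_{\widetilde{\#}}$ in Proposition \ref{P:laumon}, I would write
$$g_{\widetilde{\#}}\omega_Y\left(E^Y\right)=\mathbf{R}g_*\left(\text{Gr}^F\widetilde{\omega}_Y\left(E^Y\right)\otimes^{\mathbf{L}}_{\mathfrak{A}_{\left(Y,D^Y\right)}}g^*\mathfrak{A}_{\left(P,L\right)}\right),$$
where $\widetilde{\omega}_Y\left(E^Y\right)$ carries the trivial filtration, so that $\text{Gr}^F\widetilde{\omega}_Y\left(E^Y\right)$ is $\omega_Y\left(E^Y\right)$ concentrated in a single Hodge degree, viewed as a graded $\mathfrak{A}_{\left(Y,D^Y\right)}=\text{Sym}\,\mathcal{T}_{\left(Y,D^Y\right)}$-module supported on the zero section of $T^*_{\left(Y,D^Y\right)}$. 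I would then resolve this module by its Koszul resolution $\omega_Y\left(E^Y\right)\otimes\wedge^\bullet\mathcal{T}_{\left(Y,D^Y\right)}\otimes\mathfrak{A}_{\left(Y,D^Y\right)}$ and compute the derived tensor product. Since $\Omega^1_P\left(\log L\right)$ is trivial of rank $m$ with $V=H^0\left(P,\Omega^1_P\left(\log L\right)\right)$, one has $g^*\mathfrak{A}_{\left(P,L\right)}\simeq\mathscr{O}_Y\otimes S_\bullet$ with $S_\bullet=\text{Sym}\,V^*$, and the derived tensor product is computed by the Koszul complex
$$\left[\cdots\to\omega_Y\left(E^Y\right)\otimes\wedge^k\mathcal{T}_{\left(Y,D^Y\right)}\otimes S_\bullet\to\cdots\to\omega_Y\left(E^Y\right)\otimes S_\bullet\right]$$
whose differential is contraction against the tautological section, i.e.\ the evaluation map $V\otimes\mathscr{O}_Y\to\Omega^1_Y\left(\log D^Y\right)$. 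Using the perfect pairing $\omega_Y\left(E^Y\right)\otimes\wedge^k\mathcal{T}_{\left(Y,D^Y\right)}\simeq\Omega^{n-k}_Y\left(\log D^Y\right)\otimes\mathscr{O}_Y\left(E^Y-D^Y\right)$, this complex becomes exactly $C_{Y,D^Y,\bullet}\otimes\mathscr{O}_Y\left(E^Y-D^Y\right)$, and applying $\mathbf{R}g_*$ yields the first quasi-isomorphism.

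The second, dual quasi-isomorphism follows from the same computation, now starting from $\omega_Y\left(D^Y-E^Y\right)=\Omega^n_Y\left(\log D^Y\right)\otimes\mathscr{O}_Y\left(-E^Y\right)$, so that the twist becomes $\mathscr{O}_Y\left(-E^Y\right)$. Finally, the two ``in particular'' isomorphisms are obtained by passing to cohomology: Proposition \ref{P:direct} shows that $g_\#\widetilde{\omega}_Y\left(E^Y\right)$ and $g_\#\widetilde{\omega}_Y\left(D^Y-E^Y\right)$ are strict, with $\mathcal{H}^ig_\#\widetilde{\omega}_Y\left(E^Y\right)\simeq\left(\mathcal{H}^ig_+\widetilde{\omega}_Y[*D^Y]\right)_{!L}$ and $\mathcal{H}^ig_\#\widetilde{\omega}_Y\left(D^Y-E^Y\right)\simeq\left(\mathcal{H}^ig_+\widetilde{\omega}_Y[!D^Y]\right)_{*L}$; this strictness lets Proposition \ref{P:laumon} commute $\text{Gr}^F$ past $\mathcal{H}^i$, and taking $\mathcal{H}^i=R^ig_*$ of the two quasi-isomorphisms above then gives the displayed formulas.

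The step I expect to be the main obstacle is the Koszul identification in the second paragraph: one must verify that the differential of the derived tensor product—contraction of the log-one-forms pulled back from $V$—corresponds under the duality $\omega_Y\left(E^Y\right)\otimes\wedge^\bullet\mathcal{T}_{\left(Y,D^Y\right)}\simeq\Omega^{n-\bullet}_Y\left(\log D^Y\right)\otimes\mathscr{O}_Y\left(E^Y-D^Y\right)$ precisely to the wedge differential defining $C_{Y,D^Y,\bullet}$, and that the internal $S_\bullet$-grading matches the Hodge-grading shift $\bullet\mapsto\bullet-m+j$ in degree $j$. Keeping the log poles and the twist $\mathscr{O}_Y\left(E^Y-D^Y\right)$ consistent throughout—rather than the routine homological algebra of the Koszul complex—is where the care is required.
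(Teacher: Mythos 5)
Your proposal is correct and follows essentially the same route as the paper: unwind the definition of $g_{\widetilde{\#}}$ from Proposition \ref{P:laumon}, replace $\omega_Y\left(E^Y\right)$ by its Koszul (Spencer) resolution over $\mathfrak{A}_{\left(Y,D^Y\right)}$, convert via the pairing $\omega_Y\otimes\wedge^{k}\mathcal{T}_{\left(Y,D^Y\right)}\simeq\Omega^{n-k}_Y\left(\log D^Y\right)\otimes\mathscr{O}_Y\left(-D^Y\right)$ to the complex $C_{Y,D^Y,\bullet}\otimes\mathscr{O}_Y\left(E^Y-D^Y\right)$, and deduce the graded statements from strictness via Proposition \ref{P:direct}. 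Your opening remark that $r=0$ or $1$ forces $L$ to be smooth in the paper's sense is a useful explicit justification that the paper leaves implicit.
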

\begin{proof}
This follows exactly as \cite[2.11]{PS13} by considering Proposition \ref{P:direct} and Proposition \ref{P:laumon}. More precisely, we have 
\begin{align*}
&g_{\widetilde{\#}}\omega_Y\left(E^Y\right)\\
\simeq& \mathbf{R} g_*\left(\omega_Y\left(E^Y\right)\otimes^{\mathbf{L}}_{\mathfrak{A}_{\left(Y,D^Y\right)}}g^*\mathfrak{A}_{\left(P^{r,d},L\right)}\right)\\
\simeq&\mathbf{R} g_*\left(\left[\omega_Y\left(E^Y\right)\otimes_{\mathscr{O}_Y} \wedge^{-\bullet} \mathcal{T}_{\left(Y,D^Y\right)}\otimes_{\mathscr{O}_Y} \mathfrak{A}_{\left(Y,D^Y\right)}\right]\otimes^{\mathbf{L}}_{\mathfrak{A}_{\left(Y,D^Y\right)}}g^*\mathfrak{A}_{{\left(P^{r,d},L\right)}}\right)\\
\simeq&  \mathbf{R}g_*\left(\left[\mathscr{O}_Y\left(E^Y-D^Y\right)\otimes_{\mathscr{O}_Y} \Omega^{n+\bullet}_Y\left(\log   D^Y \right)\otimes_{\mathscr{O}_Y} \mathfrak{A}_{\left(Y,D^Y\right)}\right]\otimes^{\mathbf{L}}_{\mathfrak{A}_{\left(Y,D^Y\right)}}g^*\mathfrak{A}_{{\left(P^{r,d},L\right)}}\right)\\
\simeq&\mathbf{R}g_*\left(C_{Y,D^Y,\bullet}\otimes \mathscr{O}_Y\left(E^Y-D^Y\right)\right),
\end{align*}
where the second identity is due to the canonical resolution \cite[(1)]{W17a} and the third identity is due to \cite[Exercise II 5.16 (b)]{Ha77}.

The second identity of the proposition can be shown similarly.
\end{proof}

\begin{prop} \label{P:v}
Notations as in the previous proposition, let $p:P^{r,d}\to A^d$ be the natural projection, and $\mathcal{A}$ be an ample line bundle over $A^d$. Then we have that for any $i$ and $l>0$,
$$H^l\left(X, R^ig_*\left(C_{Y,D^Y,\bullet}\otimes\mathscr{O}_Y\left(-E^Y\right)\otimes p^*\mathcal{A}\right)\right)=0.$$
\end{prop}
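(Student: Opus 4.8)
The plan is to reduce the assertion directly to the Hodge-theoretic vanishing already established in Corollary \ref{coro:vanishing of gr}, using the identification of the Koszul-type complex $C_{Y,D^Y,\bullet}\otimes\mathscr{O}_Y(-E^Y)$ with the associated graded of a localized direct-image mixed Hodge module supplied by Proposition \ref{P:gr direct}. Concretely, the last displayed isomorphism of Proposition \ref{P:gr direct} gives
\begin{equation*}
R^i g_*\left(C_{Y,D^Y,\bullet}\otimes \mathscr{O}_Y(-E^Y)\right)\simeq \text{Gr}^F\left(\left(\mathcal{H}^i g_+\widetilde{\omega}_Y[!D^Y]\right)_{*L}\right),
\end{equation*}
so the sheaf whose higher cohomology we must kill is exactly the associated graded of a $*$-extension along $L$.

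First I would set $\mathcal{M}=\mathcal{H}^i g_+\widetilde{\omega}_Y[!D^Y]$, which is a genuine mixed Hodge module on $P^{r,d}$: its strictness, and the fact that the $*$-localization $\mathcal{M}_{*L}$ is the correct object, are precisely what Proposition \ref{P:direct} records. By the projection formula (absorbing $p^*\mathcal{A}$, pulled back to $Y$ via $g$, into the pushforward) the cohomology in question is
\begin{equation*}
H^l\left(P^{r,d}, \text{Gr}^F\left(\mathcal{M}_{*L}\right)\otimes p^*\mathcal{A}\right),
\end{equation*}
and since $\text{Gr}^F(\mathcal{M}_{*L})=\bigoplus_p \text{Gr}^F_p(\mathcal{M}_{*L})$ as a graded sheaf, it suffices to prove the vanishing for each individual graded piece $\text{Gr}^F_p(\mathcal{M}_{*L})$.

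The last step is then immediate: this is exactly the first vanishing in Corollary \ref{coro:vanishing of gr}, whose hypothesis $r=0$ or $1$ is granted here, giving $H^l(P^{r,d}, \text{Gr}^F_p(\mathcal{M}_{*L})\otimes p^*\mathcal{A})=0$ for every $l>0$; summing over $p$ finishes the proof. I do not expect a genuine obstacle here, since all the analytic work is already contained in the vanishing theorem of Section 5. The only points requiring care are bookkeeping ones: matching the $*L$ (as opposed to $!L$) localization together with the twist by $\mathcal{A}$ (rather than $\mathcal{A}^{-1}$), so that the correct half of Corollary \ref{coro:vanishing of gr} applies, and confirming that $\mathcal{M}$ is a well-defined mixed Hodge module so that the corollary is applicable at all. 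Both are guaranteed by Propositions \ref{P:direct} and \ref{P:gr direct}.
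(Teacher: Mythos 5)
Your proposal is correct and follows essentially the same route as the paper: the paper's one-line proof cites Proposition \ref{P:gr direct} together with Corollary \ref{coro:vanishing}, and the latter is itself just the identification $R^i g_*\left(C_{Y,D^Y,\bullet}\otimes \mathscr{O}_Y\left(-E^Y\right)\right)\simeq \text{Gr}^F\left(\left(\mathcal{H}^i g_+\widetilde{\omega}_Y[!D^Y]\right)_{*L}\right)$ fed into the first vanishing of Corollary \ref{coro:vanishing of gr}, which is exactly what you do directly. Your bookkeeping of the $*L$-localization, the twist by $p^*\mathcal{A}$ via the projection formula, and the $r=0$ or $1$ hypothesis all match the paper's intent.
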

\begin{proof}
It follows by the previous proposition and Proposition \ref{coro:vanishing}. 
\end{proof}

\begin{proof}[Proof of Claim \ref{C:main claim} (Main Claim)]
Denote 
$$\mathcal{E}=\mathbf{R}p_{V*} R^0\hat{g}_*\left(\hat{\mathcal{L}}\otimes p_Y^*\mathscr{O}_Y\left(-D^Y\right)\otimes C_{Y,D^Y}\right).$$ We first show that for $l>0$,
\begin{equation}\label{e:ve}
\mathcal{H}^l \mathcal{E}=0.
\end{equation}
In particular, $\mathcal{E}$ is a sheaf.

Since both $p_P$ and $p_Y$ are affine and by Proposition \ref{P:v}, we have 
\begin{align*}
&H^l\left(P\times V, R^0\hat{g}_*\left(\hat{\mathcal{L}}\otimes p_Y^*\mathscr{O}_Y\left(-D^Y\right)\otimes C_{Y,D^Y}\right)\right) \\
\simeq&H^l\left(P, p_{1*} R^0\hat{g}_*\left(\hat{\mathcal{L}}\otimes p_Y^*\mathscr{O}_Y\left(-D^Y\right)\otimes C_{Y,D^Y}\right)\right)\\
\simeq&H^l\left(P,  R^0g_* p_{Y*}\left(\hat{\mathcal{L}}\otimes p_Y^*\mathscr{O}_Y\left(-D^Y\right)\otimes C_{Y,D^Y}\right)\right) \\
\simeq&H^l\left(P,  R^0g_* \left(C_{Y,D^Y,\bullet}\otimes \mathscr{O}_Y\left(-E^Y\right)\right)\otimes p^*\mathcal{A}\right) \\
=&0,
\end{align*}
where $E^Y=D^Y-f^{-1}L$.

Hence, due to the degeneration of the Leray spectral sequence induced by $p_{V*}$, we have 
\begin{align*}
&H^0\left(V, \mathcal{H}^l \mathcal{E}\right)\\
=&H^0\left(V, R^l p_{V*} R^0\hat{g}_*\left(\hat{\mathcal{L}}\otimes p_Y^*\mathscr{O}_Y\left(-D^Y\right)\otimes C_{Y,D^Y}\right)\right)\\
\simeq&H^l\left(P\times V, R^0\hat{g}_*\left(\hat{\mathcal{L}}\otimes p_Y^*\mathscr{O}_Y\left(-D^Y\right)\otimes C_{Y,D^Y}\right)\right)\\
=&0,
\end{align*} 
and so that (\ref{e:ve}) follows.

To prove that
$p_{V*}R^0\hat{g}_*\left(\hat{\mathcal{L}}^{-1}\otimes C_{Y,D^Y}\right)$ is torsion free, now it suffices to show that 
\begin{equation}\label{e:claim dual}
p_{V*}R^0\hat{g}_*\left(\hat{\mathcal{L}}^{-1}\otimes C_{Y,D^Y}\right)=\mathbf{R}^0\mathcal{H}om\left(\mathcal{E}, \mathscr{O}_V\right).
\end{equation}

By Grothendieck Duality, we have that 
\begin{align}
&\mathbf{R}\mathcal{H}om\left(\mathcal{E},\mathscr{O}_V\right)\nonumber\\
\simeq&\mathbb{D}_V\left(\mathbf{R}p_{V*} R^0\hat{g}_*\left(\hat{\mathcal{L}}\otimes p_Y^*\mathscr{O}_Y\left(-D^Y\right)\otimes C_{Y,D^Y}\right)\right)[-m]\nonumber\\
\simeq&\mathbf{R}p_{V*} \mathbb{D}_{P\times V}\left( R^0\hat{g}_*\left(\hat{\mathcal{L}}\otimes p_Y^*\mathscr{O}_Y\left(-D^Y\right)\otimes C_{Y,D^Y}\right)\right)[-m].\label{E:rhom}
\end{align}
Note that by definition, 
$$\hat{\mathcal{L}}\otimes p_Y^*\mathscr{O}_Y\left(-D^Y\right)=p_Y^*\left(\mathscr{O}_Y\left(-E^Y\right)\otimes g^*p^*\mathcal{A}\right) .$$
By Proposition \ref{P:gr direct} and $T^*_{(P,L)}=P\times V$, we have 
\begin{align*}
R^0\hat{g}_*\left(C_{Y,D^Y}\otimes p_Y^* \mathscr{O}_Y\left(E^Y-D^Y\right)\right)\simeq& \mathcal{G}\left(\left(\mathcal{H}^0 g_+ \widetilde{\omega}_Y[*D^Y]\right)_{!L}\right),\\
R^0\hat{g}_*\left(C_{Y,D^Y}\otimes p_Y^* \mathscr{O}_Y\left(-E^Y\right)\right)\simeq& \mathcal{G}\left(\left(\mathcal{H}^0 g_+ \widetilde{\omega}_Y[!D^Y]\right)_{*L}\right).
\end{align*}
Since up to a Tate twist, we have 
$$\mathbf{D}_P \left(\mathcal{H}^0 g_+ \widetilde{\omega}_Y[!D^Y]\right)=\mathcal{H}^0 g_+ \widetilde{\omega}_Y[*D^Y],$$
by Proposition \ref{P:dual} and Proposition \ref{P:dual gr}, we have 
\begin{align*}
&\mathbf{R}\mathcal{H}om\left(R^0\hat{g}_*\left(C_{Y,D^Y}\otimes p_Y^* \mathscr{O}_Y\left(-E^Y\right)\right), p_P^*\omega_P[m]\right)\\
\simeq & \mathcal{G}\left(\mathbf{D}_{(P,L)}\left(\mathcal{H}^0 g_+ \widetilde{\omega}_Y[!D^Y]\right)_{*L}\right)\\
\simeq & \mathcal{G}\left(\left(\mathcal{H}^0 g_+ \widetilde{\omega}_Y [*D^Y]\right)_{!L} \right)\\
\simeq &R^0\hat{g}_*\left(C_{Y,D^Y}\otimes p_Y^* \mathscr{O}_Y\left(E^Y-D^Y\right)\right).
\end{align*}
Comparing it with (\ref{E:rhom}), and by definition
$$\hat{\mathcal{L}}^{-1}=p_Y^*\left(\mathscr{O}_Y\left(E^Y-D^Y\right)\otimes g^*p^*\mathcal{A}^{-1}\right),$$
we obtain
$$\mathbf{R}\mathcal{H}om\left(\mathcal{E}, \mathscr{O}_V\right)\simeq \mathbf{R}p_{V*}R^0\hat{g}_*\left(\hat{\mathcal{L}}^{-1}\otimes C_{Y,D^Y}\right),
$$
which implies (\ref{e:claim dual}).
\end{proof}
\bibliographystyle{alpha}
\bibliography{mybib}
\end{document}